\DeclareFontFamily{U}{rsfs}{\skewchar\font127 }
\DeclareFontShape{U}{rsfs}{m}{n}{%
   <-6> rsfs5
   <6-8> rsfs7
   <8-> rsfs10
}{}
\tikzstyle directed=[postaction={decorate,decoration={markings, mark=at position #1 with {\arrow[scale=1.25]{>}}}}]
\tikzstyle rdirected=[postaction={decorate,decoration={markings, mark=at position #1 with {\arrow[scale=1.25]{<}}}}]
\newtheoremstyle{style}
{10pt} 
{10pt} 
{\itshape} 
{0pt} 
{\bfseries} 
{.} 
{10pt} 
{} 
\theoremstyle{style}
\newtheorem{theorem}{Theorem}[section]
\newtheorem{lemma}[theorem]{Lemma}
\newtheorem{question}[theorem]{Question}
\newtheorem{definition}[theorem]{Definition}
\newtheorem{remark}[theorem]{Remark}
\newtheorem{example}[theorem]{Example}
\numberwithin{equation}{section}
\numberwithin{figure}{section}
\numberwithin{equation}{section}
\numberwithin{figure}{section}
\newcommand{\bigslant}[2]{{\raisebox{.3em}{$#1$}\left/\raisebox{-.3em}{$#2$}\right.}}
\tikzset{mynode/.style={inner sep=3pt,fill,outer sep=0,circle =2pt}}
\tikzset{smallnode/.style={inner sep=1.5pt,fill,outer sep=0,circle =1pt}}
\tikzset{bnode/.style={inner sep=2pt,fill,outer sep=0,circle, color=blue}}
\tikzset{whitenode/.style={inner sep=3pt,draw,outer sep=0,circle}}
\tikzset{emptynode/.style={inner sep=1.25pt,fill=none,outer sep=0,circle,draw}}
\newcommand{\regDdiag}{\begin{tikzpicture}[scale=.5, baseline=-2] 
	\begin{pgfonlayer}{nodelayer}
		\node   (0) at (-1, 1) {};
		\node   (1) at (2, 1) {};
		\node   (2) at (-1, -1) {};
		\node   (3) at (2, -1) {};
		\node   (4) at (-0.5, -1) {};
		\node   (5) at (1.5, -1) {};
		\node   (6) at (1.5, 1) {};
		\node   (7) at (-0.5, 1) {};
		\node   (8) at (-0.5, 2) {};
		\node   (9) at (1.5, 2) {};
		\node   (10) at (-0.5, -2) {};
		\node   (11) at (1.5, -2) {};
		\node   (12) at (0.5, 1.5) {$\cdots$};
		\node   (13) at (0.5, -1.5) {$\cdots$};
		\node   (14) at (-0.5, 2.5) {$1$};
		\node   (15) at (1.5, 2.5) {$1$};
		\node   (16) at (-0.5, -2.5) {$1$};
		\node   (17) at (1.5, -2.5) {$1$};
		\node   (18) at (0.5, 0) {$D$};
	\end{pgfonlayer}
	\begin{pgfonlayer}{edgelayer}
		\draw (2.center) to (0.center);
		\draw (0.center) to (1.center);
		\draw (1.center) to (3.center);
		\draw (3.center) to (2.center);
		\draw (4.center) to (10.center);
		\draw (5.center) to (11.center);
		\draw (8.center) to (7.center);
		\draw (9.center) to (6.center);
	\end{pgfonlayer}
\end{tikzpicture}}
\pgfplotsset{compat=1.18}
\newcommand{\CC}{\mathbb{C}}
\newcommand{\ZZ}{\mathbb{Z}}
\newcommand{\NN}{\mathbb{N}}
\title{Diagrammatic Categories which arise from Representation Graphs}
\author{Ryan Reynolds}
\begin{document}

\maketitle

\pagenumbering{arabic}

\section*{Abstract}

The main result of this paper utilizes the representation graph of a group $G$, $R(V,G)$, and gives a general construction of a diagrammatic category $\mathbf{Dgrams}_{R(V,G)}$.  The proof of the main theorem shows that, given explicit criteria, there is an equivalence of categories between a quotient category of $\mathbf{Dgrams}_{R(V,G)}$ and a full subcategory of $G-\textbf{mod}$ with objects being the tensor products of finitely many irreducible $G$-modules.

{

\tableofcontents

}

\pagebreak

{

\section{Introduction}

In this paper, we provide a framework in which we can develop diagrammatic categories from the data of a representation graph (or McKay quiver) for a group $G$.  In general, there is a full and essentially surjective functor from this diagrammatic category onto a full monoidal subcategory of the category of all finite dimensional $G$-modules.  Furthermore, we provide criteria which, when satisfied, results in an induced functor which is faithful and, thus, an equivalence of categories. 

While this paper focuses on the relationship between our diagrammatic construction and $G$-modules over $\CC$, the results of the main theorem extend to other contexts.  For example, one can consider a fusion category.  In a fusion category, the fusion rule can be described as a graph.  Using this graph, we may define a diagrammatic category utilizing the construction in this paper which will be categorically equivalent to the fusion category with which we started.

To provide some concrete examples, we will focus on the following context for this paper.  Felix Klein classified the finite subgroups of the special unitary group, $SU(2)$.  There are two infinite families of finite subgroups along with 3 exceptional subgroups: the cyclic groups of order $n$, $\mathbf{C}_n$; the binary dihedral groups, $\mathbf{D}_n$, of order $4n$; the binary tetrahedral group $\mathbf{T}$; the binary octahedral group $\mathbf{O}$; and the binary icosahedral group $\mathbf{I}$.  Around 1980, McKay made the observation that certain affine Dynkin diagrams and the representation graphs associated with these finite subgroups are identical \cite{McKay81}.  We will focus on the cyclic groups and the binary tetrahedral group along with their representation graphs.  See Section \ref{sec:McKayGraphs} for details.

As a motivating example, consider for $k\in\ZZ_{\geq 0}$, $\delta\in\CC$, the diagrammatic Temperley--Lieb algebras, $TL_k(\delta)$, were developed by the authors of the same name in \cite{TL71}.  For $k\in\ZZ_{\geq 0}$, there are isomorphisms between the endomorphism algebra $$Z_k(SU(2)):=\operatorname{End}_{SU(2)}(V^{\otimes k})$$ of the natural module $V$ for $SU(2)$ and the Temperley--Lieb algebra $TL_k(2)$.  In \cite{BBH16}, Barnes, Benkart, and Halverson combined the work of McKay and Temperley--Lieb by describing the endomorphism algebras of the finite subgroups of $SU(2)$ and presenting diagrammatics for the $\mathbf{C}_n$ and $\mathbf{D}_n$ cases.  

The study of endomorphism algebras like $Z_k(SU(2))$ can be generalized to more general homomorphism spaces.  For example, we can study $\operatorname{Hom}_{SU(2)}\left(V^{\otimes k}, V^{\otimes \ell}\right)$ for all $k, \ell\in\ZZ_{\geq 0}$.  This gives us new tools, new perspective, and a richer understanding of the representation theory.  With this generalization in mind, the diagrammatic Temperley--Lieb \emph{category} was developed, see \cite{Tur94} and \cite{Chen14}.  This category admits a fully faithful monoidal functor to the category whose objects are tensor products of $V$ and the morphisms are all $SU(2)$-linear maps.  In particular, the Temperley--Lieb algebras appear as endomorphism algebras in the Temperley--Lieb category.  

Surprisingly, entire categories can be easier to derive than individual endomorphism algebras.  In particular, the Temperley--Lieb category has generating diagrams known as the cup, cap, and identity strand, and there is a diagrammatic basis for each space of homomorphisms, Hom$_{SU(2)}(V^{\otimes k}, V^{\otimes \ell})$, which can be described as all non-crossing diagrams with $k$ nodes on the bottom of the diagram and $\ell$ nodes on the top.

In this paper, we explore diagrammatic categories which expand the set of generating objects which correspond to all of the simple $\mathbf{C}_n$-modules and provide explicit relations giving a diagrammatic description of the monoidal full subcategory generated by the irreducible $\mathbf{C}_n$-modules, $\mathbf{C}_n$-\textbf{mod}$_{\text{irr}}$.  In order to give a more general constructions, in section \ref{chpt:Dgrams}, we utilize the representation graph of a group $G$, $R(V,G)$, and give a general construction of a diagrammatic category $\mathbf{Dgrams}_{R(V,G)}$.  The proof of the main theorem shows that, given explicit criteria, there is an equivalence of categories between a quotient category of $\mathbf{Dgrams}_{R(V,G)}$ and $G$-\textbf{mod}$_{\text{irr}}$.  In the final section, we give a few final remarks regarding generalization to directed graphs and give a few examples which show that these results apply outside of the context of $SU(2)$ and its finite subgroups.

We shall close out the introduction with a discussion of certain directions in which this work might extend.  For the constructions in this paper, the functor to the category of $G$-modules can be thought of as a functor to the category of $\CC$-vector spaces once we forget the $G$-action.  In other words, we have a representation of each of these diagrammatic categories.  Just as one group can have many representations, one category can have many interesting representations.  This is an active area of research.  

For example, Sam and Snowden explore the representation theory of the Brauer category in \cite{SS2020}.  They specifically mention that much of the theory they develop could be transferable to other categories, like the Temperley--Lieb categories and its variants.  In particular, we expect it applies to the categories introduced in this paper.

Similarly, Brundan and Vargas give a concrete diagrammatic definition of the affine partition category, and use it to study the representation theory of the partition category \cite{BV2021}.  It is with these two papers in mind that we may ask the following questions.

\begin{question}
Can we classify and study the representations of the diagrammatic categories associated to the finite subgroups of $SU(2)$?  In particular, what is the categorical representation theory of these diagrammatic categories, and can we extend some notions such as highest weight module, semi-simplicity, irreducible modules, etc. to these categories?
\end{question}

In addition to the above questions, there are other directions one might consider exploring.  The hands-on combinatorial nature of this area makes it easy to compute interesting examples and special cases.  Another direction could be to explore how these categories react to changes in certain parameters.  For example, the Temperley--Lieb category, when not considering the connection to $SU(2)$, can be defined with a parameter $\delta\in \CC$ where $\delta=2$ is the Temperley--Lieb category for $SU(2)$.  What would introducing such a parameter to these diagrammatic categories change about the combinatorics or representation theory?  For example, one might explore how these categories decategorify.

\section{Representation Graphs and Diagrammatic categories}
\label{chpt:Prelims}

\noindent This section discusses some of the background material this paper utilizes.  Specifically, we provide the construction of a representation graph, and we give some pertinent background for monoidal categories and an example of a diagrammatic category.

\subsection{Representation Graphs and the McKay Correspondence} 
\label{sec:McKayGraphs} 

\noindent This section is a summary of the work in \cite{BBH16}, which covers this material more comprehensively.  Their work provided important motivating ideas for the constructions in this paper.  

Let us set some notation.  Let $\{G^{(a)}\}_{a\in A}$ be a set of isomorphism class representatives for the simple $G$-modules.  Let $V$ be some $G$-module, not necessarily simple.

\begin{definition} The \textit{representation graph} $R(V,G)$ is a directed graph with nodes labeled by $a \in A$, and if $V\otimes G^{(a)}\cong\bigoplus\limits_{b} \left(G^{(b)}\right)^{m_b}$ where $m_b$ is the multiplicity of $G^{(b)}$ in $V\otimes G^{(a)}$, $R(V,G)$ has $m_b$ directed edges from node $a$ to node $b$.  In the event that there is a pair of directed edges, one from $a$ to $b$ and one from $b$ to $a$, we will represent this by a single undirected edge between $a$ to $b$.
\end{definition}

To illustrate the definition, let us construct an example explicitly. 

\begin{example}
Let $G=SU(2)$ and let $V=\CC^2$.  Let $G^{(a)}=V(a)$ for $a\in A:=\NN$.  Notice that $V=V(1)=G^{(1)}$ is simple, and in fact for each $a \in \mathbb{N}$, there is one irreducible $G$-module of dimension $a+1$.  From the Clebsch--Gordon formula, 
$V\otimes G^{(a)}=G^{(a-1)}\oplus G^{(a+1)}$ for all $a \in \mathbb{N}$.  Thus, the representation graph is the undirected graph

\begin{center}
\begin{tikzpicture}[{circ/.style}={shape=circle, inner sep=1pt, draw, node contents=}]
	\begin{pgfonlayer}{nodelayer}
		\node   (0) at (-4, 0) {};
		\node [thick, style=emptynode] (1) at (-2, 0) {$1$};
		\node [thick, style=emptynode] (2) at (0, 0) {$2$};
		\node [thick, style=emptynode] (3) at (2, 0) {$3$};
		\node   (4) at (4, 0) {};
		\node [thick, style=emptynode] (7) at (-4.25, 0) {$0$};
		\node   (8) at (-2.25, 0) {};
		\node   (9) at (-0.25, 0) {};
		\node   (10) at (0.25, 0) {};
		\node   (11) at (-1.75, 0) {};
		\node   (12) at (1.75, 0) {};
		\node   (13) at (2.25, 0) {};
		\node   (14) at (4.35, 0) {$\ \cdots$};
	\end{pgfonlayer}
	\begin{pgfonlayer}{edgelayer}
		\draw[thick] (0.center) to (8.center);
		\draw[thick] (11.center) to (9.center);
		\draw[thick] (10.center) to (12.center);
		\draw[thick] (13.center) to (4.center);
	\end{pgfonlayer}
\end{tikzpicture}
\end{center}

\noindent where the node $a$ corresponds to $G^{(a)}=V(a)$.
\end{example}

In the 19th century, Felix Klien classified all the finite subgroups of $SU(2)$.  There are two families indexed by $n\in\NN$: the cyclic groups $\mathbf{C}_n$ and the binary dihedral groups $\mathbf{D}_n$; along with three exceptional groups: the binary tetrahedral group, $\mathbf{T}$; binary octahedral group, $\mathbf{O}$; and binary icosahedral group, $\mathbf{I}$.  In 1980, McKay made his rather beautiful observation that the representation graphs $R(G,V)$ of these groups using the natural module $V$ for $SU(2)$ as the defining module are in one-to-one correspondence with the affine Dynkin diagrams of certain types.  The following example makes explicit the correspondence when considering the binary tetrahedral group.

\begin{example}
\label{BTex}

The binary tetrahedral group $\mathbf{T}$ is generated by $X$, $Y$, and $A$ where
\[
X=\left(\begin{array}{c c} i & 0 \\
 0 & -i
 \end{array}\right)\hspace{10mm}
 Y=\left(\begin{array}{c c} 0 & 1 \\
 -1 & 0
 \end{array}\right)\hspace{10mm}
 A=\dfrac{1}{2}\left(\begin{array}{c c} 1+i & 1+i \\
 -1+i & 1-i
 \end{array}\right)\hspace{5mm}
 \]
 
 \noindent and $i=\sqrt{-1}$.  Furthermore, the simple $\mathbf{T}$-modules can be characterized as follows:  there are three $1$-dimensional simple $\mathbf{T}$-modules which we will call $T^{(0)}$, $T^{(4)}$, and $T^{(4')}$, three $2$-dimensional simple $\mathbf{T}$-modules which we will call $T^{(1)}$, $T^{(3)}$, and $T^{(3')}$, and one $3$-dimensional simple $\mathbf{T}$-modules which we will call $T^{(2)}$.  To make this construction explicit, we fix an isomorphism class representative for each simple $\mathbf{T}$-module.  
 
\
 
\noindent $T^{(0)}$ is the trivial module.
 
\noindent$T^{(1)}=\CC\text{-span}\{w_{-1}, w_{1}\}$ where 

$X w_{-1}=iw_{-1}$,  $X w_{1}=-iw_{1}$, $Y w_{-1}=-w_{1}$, $Y w_{1}=w_{-1}$, 

$A w_{-1}=\frac{1}{2}(1+i)w_{-1}+\frac{1}{2}(i-1)w_{1}$, $A w_{1}=\frac{1}{2}(1+i)w_{-1}-\frac{1}{2}(i-1)w_{1}$.

\noindent $T^{(2)}=\CC\text{-span}\{w_{-2}, w_{2}, w_{0'}\}$ where 

$X w_{-2}=-w_{-2}$, $X w_{2}=-w_{2}$, $X w_{0'}=-w_{0'}$, 

$Y w_{-2}=w_{2}$, $Y w_{2}=w_{-2}$, $Y w_{0'}=-w_{0'}$, 

$A w_{-2}=\frac{1}{2}i w_{-2}-\frac{1}{2}i w_{2}-\frac{1}{2} w_{0'}$, $A w_{2}=\frac{1}{2}i w_{-2}-\frac{1}{2}i w_{2}+\frac{1}{2} w_{0'}$, and $A w_{0'}=i w_{-2}+i w_{2}$.

\noindent $T^{(3)}=\CC\text{-span}\{w_{-3}, w_{3}\}$ where

$X w_{-3}=iw_{-3}$,  $X w_{3}=-iw_{3}$, $Y w_{-3}=-w_{3}$, $Y w_{3}=w_{-3}$, 

$A w_{-3}=\frac{1}{4}(\sqrt{3}-1-i(1+\sqrt{3}))w_{-3}+\frac{1}{4}(\sqrt{3}+1+i(-1+\sqrt{3}))w_{3}$, 

$A w_{3}=\frac{1}{4}(\sqrt{3}-1-i(1+\sqrt{3}))w_{-3}-\frac{1}{4}(\sqrt{3}+1+i(-1+\sqrt{3}))w_{3}$.

\noindent $T^{(3')}=\CC\text{-span}\{w_{-3'}, w_{3'}\}$ where

$X w_{-3'}=iw_{-3'}$,  $X w_{3'}=-iw_{3'}$, $Y w_{-3'}=-w_{3'}$, $Y w_{3'}=w_{-3'}$, 

$A w_{-3'}=\frac{1}{4}(-\sqrt{3}-1+i(-1+\sqrt{3}))w_{-3'}+\frac{1}{4}(-\sqrt{3}+1-i(1+\sqrt{3}))w_{3'}$, 

$A w_{3'}=\frac{1}{4}(-\sqrt{3}-1+i(-1+\sqrt{3}))w_{-3'}-\frac{1}{4}(-\sqrt{3}+1-i(1+\sqrt{3}))w_{3'}$.

\noindent $T^{(4)}=\CC\text{-span}\{w_{4}\}$ where

$X w_{4}=w_{4}$, $Y w_{4}=w_{4}$, $A w_{4}=\frac{1}{2}(-i\sqrt{3}-1)w_{4}$.

\noindent $T^{(4')}=\CC\text{-span}\{w_{4}\}$ where

$X w_{4'}=w_{4'}$, $Y w_{4'}=w_{4'}$, $A w_{4'}=\frac{1}{2}(i\sqrt{3}-1)w_{4'}$.  

\

Notice that $T^{(1)}\cong V$ where $V$ is the natural module for $SU(2)$.  Now, we are ready to build the representation graph $R(V,\mathbf{T})$.  Firstly, the $i$th node of $R(V, \mathbf{T})$ corresponds to the simple $\mathbf{T}$-module $T^{(i)}$.  Using the definition of the simples above, we can compute explicitly the direct sum decompositions of certain modules.  In particular,

\begin{align}
\label{eqn:TSums}
T^{(1)}\otimes T^{(0)} &\cong T^{(1)}, &  T^{(1)}\otimes T^{(1)} &\cong T^{(0)}\oplus T^{(2)}, & T^{(1)}\otimes T^{(4')} &\cong T^{(3')}\\
 T^{(1)}\otimes T^{(4)} &\cong T^{(3)}, & T^{(1)}\otimes T^{(3)} &\cong T^{(2)}\oplus T^{(4)}, & T^{(1)}\otimes T^{(3')} &\cong T^{(2)}\oplus T^{(4')},\\
& & \text{and } T^{(1)}\otimes T^{(2)} &\cong T^{(1)}\oplus T^{(3)} \oplus T^{(3')}
\end{align}

Thus,

\begin{equation}
\label{eqn:RepGraphT}
\begin{tikzpicture}[scale=0.75, circ/.style={shape=circle, inner sep=1pt, draw, node contents=}]
	\begin{pgfonlayer}{nodelayer}
		\node   (0) at (-4, 0) {};
		\node[style=emptynode]  (1) at (-2, 0) {$1$};
		\node[style=emptynode]   (2) at (0, 0) {$2$};
		\node[style=emptynode]   (3) at (2, 0) {$3$};
		\node   (4) at (4, 0) {};
		\node[style=emptynode]   (5) at (0, -2) {$3'$};
		\node   (6) at (0, -3.9) {};
		\node[style=emptynode]   (7) at (-4.25, 0) {$0$};
		\node   (8) at (-2.25, 0) {};
		\node   (9) at (-0.25, 0) {};
		\node   (10) at (0.25, 0) {};
		\node   (11) at (-1.75, 0) {};
		\node   (12) at (1.75, 0) {};
		\node   (13) at (2.25, 0) {};
		\node[style=emptynode]   (14) at (4.25, 0) {$4$};
		\node   (15) at (0, -1.65) {};
		\node   (16) at (0, -2.35) {};
		\node[style=emptynode]   (17) at (0, -4.25) {$4'$};
		\node   (18) at (0, -0.25) {};
	\end{pgfonlayer}
	\begin{pgfonlayer}{edgelayer}
		\draw (0.center) to (8.center);
		\draw (11.center) to (9.center);
		\draw (10.center) to (12.center);
		\draw (13.center) to (4.center);
		\draw (18.center) to (15.center);
		\draw (16.center) to (6.center);
	\end{pgfonlayer}
\end{tikzpicture}
\end{equation}

\noindent is the realization of the representation graph $R(V, \mathbf{T})$.  Observe that this is the affine Dynkin diagram $\hat{E}_6$.  
\end{example}

In a similar manner, the representation graphs for the other finite subgroups of $SU(2)$, $\mathbf{C}_n$, $\mathbf{D}_n$, $\mathbf{O}$, and $\mathbf{I}$, respectively correspond to the Dynkin diagram $\hat{A}_{n-1}$, $\hat{D}_{n+2}$, $\hat{E}_7$, and $\hat{E}_8$.  

It is advantageous for this paper to establish some notation.  Given a representation graph $R(V,G)$, we let $P(a,b)$ be the set of all paths from $a$ to $b$.  We let $P(a,b)_k$ be the subset of $P(a,b)$ consisting of all paths of length $k$.  A path $\mathbf{p}\in P(a,b)_k$ can be identified with a $k$-tuple $\mathbf{p}=(a, b_1, b_2, \dots, b_{k-1}, b)$ which traverses the nodes $b_i\in I_G$ for $i\in\{1, 2,\dots, k-1\}$.  

\begin{example}
Considering the representation graph of $\mathbf{T}$, $R\left(T^{(1)},\mathbf{T}\right)$ from (\ref{eqn:RepGraphT}).  There are $5$ paths of length $4$ from the node labeled by $1$ to the node labeled by $3$.  Thus, $P(1,3)_4$ has $5$ elements, namely $(1,2,3,4,3)$, $(1,2,3,2,3)$, $(1,2,1,2,3)$, $(1,2,3',2,3)$, and $(1,0,1,2,3)$.  

Using (\ref{eqn:TSums}), we know that $T^{(3)}$ is a direct summand of $\left(T^{(1)}\right)^{\otimes 5}$ and has multiplicity $5$.  Consider the path $\mathbf{p}=\left(1,2,3,4,3\right)$.  Each path corresponds to a unique isomorphic copy of $T^{(3)}$ as a submodule of $\left(T^{(1)}\right)^{\otimes 5}$ in a canonical way.  This construction is given in general for a group $G$ in \ref{sec:Gmodirr}.
\end{example}


\subsection{Diagrammatic Temperley--Lieb Category}
\label{sec:Dcat}


\noindent As the main goal of this paper is to develop diagrammatic categories which describe certain categories of representations, let us begin with a few categorical notions.  In order to define a category, one must give a collection of objects and a collection of morphisms which contains the identity morphism for each object, are closed under composition, and satisfy associativity.  The diagrammatic categories in this paper will all be strict, monoidal, and $\CC$-linear.  The following are the necessary definitions from \cite{EGNO16} with some of the technical details suppressed.

\begin{definition}
A \emph{monoidal category} is a quintuple $\left(\mathcal{C}, \otimes, a, \mathbf{1}, \iota\right)$, where $\mathcal{C}$ is a category, $\otimes: \mathcal{C}\otimes\mathcal{C}\longrightarrow\mathcal{C}$ is a bifunctor called the tensor product bifunctor, $a:\left(X\otimes Y\right)\otimes Z \xrightarrow{\sim} X \otimes\left(Y\otimes Z \right)$ is the \emph{associator} and a natural isomorphism for all objects $X,\ Y,$ and $Z$ in $\mathcal{C}$, $\mathbf{1}$ is an object of $\mathcal{C}$, and $\iota:\mathbf{1}\otimes\mathbf{1}\xrightarrow{\sim}\mathbf{1}$ is the \emph{unitor} and an isomorphism, all subject to the pentagon axiom and the unit axiom.
\end{definition}

Essentially, a monoidal category allows for tensor products of objects and morphisms in which there is an associator and a unit object.  A $\CC$-linear category asserts that the class of morphisms are in fact vector spaces over the field $\CC$ and with composition acting linearly.  

In a similar way to group or monoid presentation, we can define a $\CC$-linear monoidal category using generators and relations.  For a technical discussion of this, see \cite{EGNO16, MacL98}.  Let $\mathcal{C}$ be a monoidal category.  A collection $S$ of objects in $\mathcal{C}$ generates the objects of $\mathcal{C}$ if every object can be realized as the tensor product of elements of $S$.  Furthermore, a collection $M$ of morphisms in $\mathcal{C}$ generates the morphisms of $\mathcal{C}$ if every morphism can be realized using linear combinations, compositions, and tensor products of elements of $M$.  On the other hand, given a set of objects $S$ and a set of morphisms $M$, we can construct the free monoidal category on these sets.  One can also impose \emph{relations} on morphisms between objects.  Let $R$ be a collection of relations for morphisms in $\mathcal{C}$, and let $\mathcal{I}_R$ be the tensor ideal generated by $R$.  If $\mathcal{C}$ is generated by $S$ and $M$, then the quotient category $\mathcal{C}/\mathcal{I}_R$ is said to be generated by $S$ and $M$ subject to the relations $R$.  

\begin{definition}
A \emph{strict} monoidal category is a monoidal category in which the associator and the unitor are identity morphisms.
\end{definition}

There is a subtle issue with the functors in this paper.  All of our diagrammatic categories are strict, yet the target categories are from representation theory, and the unitor of the category of $G$-modules for a group $G$ is not the identity morphism.  However, this is not really an issue since we have Mac Lane's Strictness Theorem from \cite{EGNO16, MacL98}.

\begin{theorem}
Any monoidal category is monoidally equivalent to a strict monoidal category.
\end{theorem}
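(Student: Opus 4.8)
The plan is the classical strictification: replace the objects of $\mathcal{C}$ by finite words and fix, once and for all, one way to parenthesize tensor products. Let $(\mathcal{C}, \otimes, a, \mathbf{1}, \iota)$ be a monoidal category. First I would build a category $\mathcal{C}^{\mathrm{str}}$ whose objects are the finite (possibly empty) sequences $(X_1, \dots, X_n)$ of objects of $\mathcal{C}$. To each such word assign its left-normalized tensor $\Phi(X_1, \dots, X_n) := (\cdots(X_1 \otimes X_2)\otimes\cdots)\otimes X_n$, with $\Phi$ of the empty word set equal to $\mathbf{1}$ and $\Phi(X) = X$ on one-letter words, and declare $\mathrm{Hom}_{\mathcal{C}^{\mathrm{str}}}(S,T) := \mathrm{Hom}_{\mathcal{C}}(\Phi S, \Phi T)$ with composition and identities borrowed from $\mathcal{C}$. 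Then $\Phi$ is tautologically a fully faithful functor $\mathcal{C}^{\mathrm{str}} \to \mathcal{C}$, and it is surjective on objects since $\Phi((Y)) = Y$, so it is already an equivalence of the underlying categories.

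Next I would equip $\mathcal{C}^{\mathrm{str}}$ with a strict monoidal structure. On objects, $\otimes$ is concatenation of words, written $ST$: this is literally associative and has the empty word as a literal two-sided unit, so I would take the associator and all unitors of $\mathcal{C}^{\mathrm{str}}$ to be identity morphisms. All the content is in the tensor product of morphisms. For every pair of words $S,T$ there is a canonical reassociation isomorphism $\gamma_{S,T}\colon \Phi S \otimes \Phi T \xrightarrow{\sim} \Phi(ST)$ in $\mathcal{C}$, assembled from the structural data $a^{\pm1}$, $\iota^{\pm1}$ and the unit constraints (with $\gamma_{(),S}$ and $\gamma_{S,()}$ the left and right unitors of $\mathcal{C}$); given $f\colon \Phi S \to \Phi S'$ and $g\colon \Phi T \to \Phi T'$ I would set $f\otimes g := \gamma_{S',T'}\circ(f\otimes g)\circ\gamma_{S,T}^{-1}$ in $\mathcal{C}$. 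One then checks that this is functorial in each variable and satisfies the interchange law, and that $\otimes$ is strictly associative and strictly unital on morphisms, so that the chosen identity associator and unitors really are natural and the pentagon and triangle axioms hold trivially; each such verification reduces to commutativity of a diagram built only out of $a$ and $\iota$ together with naturality of $a,\iota$ and bifunctoriality of $\otimes$ in $\mathcal{C}$. Finally I would promote $\Phi$ to a strong monoidal functor with structure maps $\gamma_{S,T}$ and $\mathrm{id}_{\mathbf 1}\colon \mathbf{1}\to \Phi(())$, check that $\Psi\colon \mathcal{C}\to\mathcal{C}^{\mathrm{str}}$, $X\mapsto(X)$, $f\mapsto f$, is (strict) monoidal since $\Phi$ applied to the two-letter word $(X,Y)$ equals $X\otimes Y = \Phi((X))\otimes\Phi((Y))$, and observe that $\Phi\Psi = \mathrm{id}_{\mathcal{C}}$ on the nose while $\Psi\Phi(S) = (\Phi S)$, the morphism $\mathrm{id}_{\Phi S}\in \mathrm{Hom}_{\mathcal{C}}(\Phi S,\Phi S) = \mathrm{Hom}_{\mathcal{C}^{\mathrm{str}}}((\Phi S),S)$ providing a natural monoidal isomorphism $\Psi\Phi\cong\mathrm{id}_{\mathcal{C}^{\mathrm{str}}}$. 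Hence $\mathcal{C}\simeq\mathcal{C}^{\mathrm{str}}$ as monoidal categories.

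The main obstacle is concentrated entirely in the isomorphisms $\gamma_{S,T}$: one must know that between any two bracketings of a given word there is a \emph{well-defined} isomorphism built from $a$ and $\iota$, and that these are mutually compatible under composition, tensoring and concatenation, so that the functor axioms, the interchange law, strict associativity and unitality on morphisms, and the monoidal-functor axioms for $\Phi$ all follow. That is exactly Mac Lane's coherence theorem; in a self-contained account its proof --- normalizing every bracketing to the left and showing the resulting rewriting is confluent by repeated use of the pentagon and triangle axioms --- is where essentially all the work sits, so here I would simply cite it from \cite{EGNO16, MacL98}. Everything outside of $\gamma$ is then a routine diagram chase.
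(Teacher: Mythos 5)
Your proposal is the standard strictification argument (words of objects, left-normalized realization functor $\Phi$, concatenation as a strictly associative tensor, reassociation isomorphisms $\gamma_{S,T}$ conjugating the tensor of morphisms), and it is correct as a sketch, with the one genuinely nontrivial ingredient --- well-definedness and mutual compatibility of the $\gamma_{S,T}$ --- properly identified and delegated to Mac Lane's coherence theorem. The paper itself offers no proof of this statement: it simply quotes the Strictness Theorem from \cite{EGNO16, MacL98}, and your argument is essentially the proof given in those references, so there is nothing to compare beyond noting that you have reconstructed the cited proof rather than found an alternative route.
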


In order to give an example of the above definitions, let us first discuss some motivation.  Much of our discussion will be centered around defining diagrammatic algebras and categories which are specifically designed to mirror the workings of a category coming from representation theory.  

For example, consider the well-known Temperley--Lieb algebra $TL_k(\delta)$ which can be defined by generators $e_1, \dots, e_{k-1}$ and subject to the relations $$e_i^2=\delta e_i, \ e_i e_{i\pm 1} e_i = e_i\text{, and }e_i e_j =e_j e_i\text{ for }\lvert i-j \rvert > 1.$$  The algebra $TL_k(\delta)$ can be viewed diagrammatically as well where 
\begin{center}
\begin{tikzpicture}[scale=.5, baseline=0]
	\begin{pgfonlayer}{nodelayer}
		\node   (0) at (-3, 1) {};
		\node   (1) at (-3, -1) {};
		\node   (2) at (-2, 0) {$\cdots$};
		\node   (3) at (-1, 1) {};
		\node   (4) at (-1, -1) {};
		\node   (5) at (0, 1) {};
		\node   (6) at (1, 1) {};
		\node   (7) at (0, -1) {};
		\node   (8) at (1, -1) {};
		\node   (9) at (2, 1) {};
		\node   (10) at (2, -1) {};
		\node   (11) at (3, 0) {$\cdots$};
		\node   (12) at (4, 1) {};
		\node   (13) at (4, -1) {};
		\node   (14) at (-5, 0) {$e_i:=$};
		\node   (15) at (-3, 1.5) {$\mathsmaller{1}$};
		\node   (16) at (0, 1.5) {$\mathsmaller{i}$};
		\node   (17) at (1, 1.5) {$\mathsmaller{i+1}$};
		\node   (18) at (4, 1.5) {$\mathsmaller{k}$};
	\end{pgfonlayer}
	\begin{pgfonlayer}{edgelayer}
		\draw (0.center) to (1.center);
		\draw (3.center) to (4.center);
		\draw [bend right=90, looseness=2.50] (5.center) to (6.center);
		\draw [bend left=90, looseness=2.50] (7.center) to (8.center);
		\draw (9.center) to (10.center);
		\draw (12.center) to (13.center);
	\end{pgfonlayer}
\end{tikzpicture}.
\end{center}  

\noindent Then the Temperley--Lieb algebra $TL_3(\delta)$ has a basis given by the following diagrams:

\

\begin{center}
\begin{tikzpicture}[scale=.5]
	\begin{pgfonlayer}{nodelayer}
		\node   (0) at (-6, 0.5) {};
		\node   (1) at (-5, 0.5) {};
		\node   (2) at (-4, 0.5) {};
		\node   (3) at (-6, -1) {};
		\node   (4) at (-5, -1) {};
		\node   (5) at (-4, -1) {};
		\node   (6) at (-2, 0.5) {};
		\node   (7) at (-1, 0.5) {};
		\node   (8) at (0, 0.5) {};
		\node   (9) at (-2, -1) {};
		\node   (10) at (-1, -1) {};
		\node   (11) at (0, -1) {};
		\node   (12) at (2, 0.5) {};
		\node   (13) at (3, 0.5) {};
		\node   (14) at (4, 0.5) {};
		\node   (15) at (2, -1) {};
		\node   (16) at (3, -1) {};
		\node   (17) at (4, -1) {};
		\node   (18) at (6, 0.5) {};
		\node   (19) at (7, 0.5) {};
		\node   (20) at (8, 0.5) {};
		\node   (21) at (6, -1) {};
		\node   (22) at (7, -1) {};
		\node   (23) at (8, -1) {};
		\node   (24) at (10.5, 0.5) {};
		\node   (25) at (11.5, 0.5) {};
		\node   (26) at (12.5, 0.5) {};
		\node   (27) at (10.5, -1) {};
		\node   (28) at (11.5, -1) {};
		\node   (29) at (12.5, -1) {};
		\node   (30) at (-3, -0.25) {,};
		\node   (31) at (1, -0.25) {,};
		\node   (32) at (5, -0.25) {,};
		\node   (33) at (9.25, -0.25) { , and  };
		\node   (34) at (13.5, -0.25) {.};
	\end{pgfonlayer}
	\begin{pgfonlayer}{edgelayer}
		\draw (0.center) to (3.center);
		\draw (1.center) to (4.center);
		\draw (2.center) to (5.center);
		\draw [bend left=75, looseness=1.50] (9.center) to (10.center);
		\draw [bend left=75, looseness=1.50] (7.center) to (6.center);
		\draw (8.center) to (11.center);
		\draw [bend left=75, looseness=1.50] (15.center) to (16.center);
		\draw [bend right=75, looseness=1.50] (13.center) to (14.center);
		\draw (12.center) to (17.center);
		\draw [bend right=75, looseness=1.50] (18.center) to (19.center);
		\draw [bend left=75, looseness=1.50] (22.center) to (23.center);
		\draw (21.center) to (20.center);
		\draw [bend left=75, looseness=1.50] (28.center) to (29.center);
		\draw [bend right=75, looseness=1.50] (25.center) to (26.center);
		\draw (24.center) to (27.center);
	\end{pgfonlayer}
\end{tikzpicture}
\end{center}

The composition product is given by vertically stacking diagrams as shown in the next example.  Furthermore, whenever there is a closed connected component, we delete it and multiply the resulting diagram by a factor of $\delta$.  

\begin{example}
Let

\begin{center}
\begin{tikzpicture}[scale=.65]
	\begin{pgfonlayer}{nodelayer}
		\node   (0) at (-1, 1) {};
		\node   (1) at (4, 1) {};
		\node   (2) at (5, 1) {};
		\node   (3) at (1, -0.5) {};
		\node   (4) at (2, -0.5) {};
		\node   (5) at (3, -0.5) {};
		\node   (6) at (1, 1) {};
		\node   (7) at (2, 1) {};
		\node   (8) at (-2, 1) {};
		\node   (9) at (-2, -0.5) {};
		\node   (10) at (-1, -0.5) {};
		\node   (11) at (0, -0.5) {};
		\node   (12) at (6, 1) {};
		\node   (13) at (0, 1) {};
		\node   (14) at (3, 1) {};
		\node   (15) at (4, -0.5) {};
		\node   (16) at (5, -0.5) {};
		\node   (17) at (6, -0.5) {};
		\node   (18) at (12, 1) {};
		\node   (19) at (13, 1) {};
		\node   (20) at (14, 1) {};
		\node   (21) at (12, -0.5) {};
		\node   (22) at (13, -0.5) {};
		\node   (23) at (14, -0.5) {};
		\node   (24) at (15, 1) {};
		\node   (25) at (16, 1) {};
		\node   (26) at (17, 1) {};
		\node   (27) at (15, -0.5) {};
		\node   (28) at (16, -0.5) {};
		\node   (29) at (17, -0.5) {};
		\node   (30) at (9, 1) {};
		\node   (31) at (10, 1) {};
		\node   (32) at (11, 1) {};
		\node   (33) at (9, -0.5) {};
		\node   (34) at (10, -0.5) {};
		\node   (35) at (11, -0.5) {};
		\node   (54) at (-3, 0.25) {$d_1=$};
		\node   (56) at (8, 0.25) {and $d_2=$};
	\end{pgfonlayer}
	\begin{pgfonlayer}{edgelayer}
		\draw (0.center) to (3.center);
		\draw (1.center) to (4.center);
		\draw (2.center) to (5.center);
		\draw [bend left=75, looseness=1.50] (9.center) to (10.center);
		\draw [bend left=75, looseness=1.50] (7.center) to (6.center);
		\draw (8.center) to (11.center);
		\draw [bend left=75, looseness=1.50] (15.center) to (16.center);
		\draw [bend right=60] (13.center) to (14.center);
		\draw (12.center) to (17.center);
		\draw [bend right=75, looseness=1.50] (18.center) to (19.center);
		\draw [bend left=75, looseness=1.50] (22.center) to (23.center);
		\draw [bend left=75, looseness=1.50] (28.center) to (29.center);
		\draw [bend right=75, looseness=1.50] (25.center) to (26.center);
		\draw [bend left=75, looseness=1.50] (33.center) to (34.center);
		\draw [bend left=75, looseness=1.50] (31.center) to (30.center);
		\draw (32.center) to (35.center);
		\draw [bend right=75, looseness=1.50] (20.center) to (24.center);
		\draw [bend left=300] (27.center) to (21.center);
	\end{pgfonlayer}
\end{tikzpicture}.
\end{center}

\noindent We connect the diagrams in the obvious way:

\begin{center}
\begin{tikzpicture}
	\begin{pgfonlayer}{nodelayer}
		\node   (0) at (-1, 1) {};
		\node   (1) at (4, 1) {};
		\node   (2) at (5, 1) {};
		\node   (3) at (1, -0.5) {};
		\node   (4) at (2, -0.5) {};
		\node   (5) at (3, -0.5) {};
		\node   (6) at (1, 1) {};
		\node   (7) at (2, 1) {};
		\node   (8) at (-2, 1) {};
		\node   (9) at (-2, -0.5) {};
		\node   (10) at (-1, -0.5) {};
		\node   (11) at (0, -0.5) {};
		\node   (12) at (6, 1) {};
		\node   (13) at (0, 1) {};
		\node   (14) at (3, 1) {};
		\node   (15) at (4, -0.5) {};
		\node   (16) at (5, -0.5) {};
		\node   (17) at (6, -0.5) {};
		\node   (18) at (1, -0.5) {};
		\node   (19) at (2, -0.5) {};
		\node   (20) at (3, -0.5) {};
		\node   (21) at (1, -2) {};
		\node   (22) at (2, -2) {};
		\node   (23) at (3, -2) {};
		\node   (24) at (4, -0.5) {};
		\node   (25) at (5, -0.5) {};
		\node   (26) at (6, -0.5) {};
		\node   (27) at (4, -2) {};
		\node   (28) at (5, -2) {};
		\node   (29) at (6, -2) {};
		\node   (30) at (-2, -0.5) {};
		\node   (31) at (-1, -0.5) {};
		\node   (32) at (0, -0.5) {};
		\node   (33) at (-2, -2) {};
		\node   (34) at (-1, -2) {};
		\node   (35) at (0, -2) {};
		\node   (39) at (1, 1) {};
		\node   (40) at (2, 1) {};
		\node   (41) at (3, 1) {};
		\node   (45) at (4, 1) {};
		\node   (46) at (5, 1) {};
		\node   (47) at (6, 1) {};
		\node   (51) at (-2, 1) {};
		\node   (52) at (-1, 1) {};
		\node   (53) at (0, 1) {};
		\node   (54) at (-3, -0.5) {};
	\end{pgfonlayer}
	\begin{pgfonlayer}{edgelayer}
		\draw (0.center) to (3.center);
		\draw (1.center) to (4.center);
		\draw (2.center) to (5.center);
		\draw [bend left=75, looseness=1.50] (9.center) to (10.center);
		\draw [bend left=75, looseness=1.50] (7.center) to (6.center);
		\draw (8.center) to (11.center);
		\draw [bend left=75, looseness=1.50] (15.center) to (16.center);
		\draw [bend right=60] (13.center) to (14.center);
		\draw (12.center) to (17.center);
		\draw [bend right=75, looseness=1.50] (18.center) to (19.center);
		\draw [bend left=75, looseness=1.50] (22.center) to (23.center);
		\draw [bend left=75, looseness=1.50] (28.center) to (29.center);
		\draw [bend right=75, looseness=1.50] (25.center) to (26.center);
		\draw [bend left=75, looseness=1.50] (33.center) to (34.center);
		\draw [bend left=75, looseness=1.50] (31.center) to (30.center);
		\draw (32.center) to (35.center);
		\draw [bend right=75, looseness=1.50] (20.center) to (24.center);
		\draw [bend left=300] (27.center) to (21.center);
	\end{pgfonlayer}
\end{tikzpicture}
\end{center}

\

\noindent and we use isotopies to straighten out connected components, as well as delete any connected components contained completely in the middle of the diagram to get

\begin{center}
\begin{tikzpicture}
	\begin{pgfonlayer}{nodelayer}
		\node   (0) at (-1, 0) {};
		\node   (1) at (4, 0) {};
		\node   (2) at (5, 0) {};
		\node   (6) at (1, 0) {};
		\node   (7) at (2, 0) {};
		\node   (8) at (-2, 0) {};
		\node   (12) at (6, 0) {};
		\node   (13) at (0, 0) {};
		\node   (14) at (3, 0) {};
		\node   (21) at (1, -2) {};
		\node   (22) at (2, -2) {};
		\node   (23) at (3, -2) {};
		\node   (27) at (4, -2) {};
		\node   (28) at (5, -2) {};
		\node   (29) at (6, -2) {};
		\node   (33) at (-2, -2) {};
		\node   (34) at (-1, -2) {};
		\node   (35) at (0, -2) {};
		\node   (39) at (1, 0) {};
		\node   (40) at (2, 0) {};
		\node   (41) at (3, 0) {};
		\node   (45) at (4, 0) {};
		\node   (46) at (5, 0) {};
		\node   (47) at (6, 0) {};
		\node   (52) at (-1, 0) {};
		\node   (53) at (0, 0) {};
		\node   (54) at (-3, -1) {$d_1\circ d_2:=\delta$};
	\end{pgfonlayer}
	\begin{pgfonlayer}{edgelayer}
		\draw [bend left=75, looseness=1.50] (7.center) to (6.center);
		\draw [bend right=60] (13.center) to (14.center);
		\draw [bend left=75, looseness=1.50] (22.center) to (23.center);
		\draw [bend left=75, looseness=1.50] (28.center) to (29.center);
		\draw [bend left=75, looseness=1.50] (33.center) to (34.center);
		\draw [bend left=300] (27.center) to (21.center);
		\draw (8.center) to (35.center);
		\draw [bend right=60, looseness=0.75] (52.center) to (45.center);
		\draw [bend right=90, looseness=1.50] (46.center) to (47.center);
	\end{pgfonlayer}
\end{tikzpicture}
\end{center}

\end{example}

By setting $\delta = 2$, we get the following theorem.

\begin{theorem}\cite{West95} For all $k\geq 0$, there is an isomorphism of algebras

\begin{equation*}
TL_k(2)\ \xrightarrow{\cong} \text{ End}_{SU(2)}\left(V^{\otimes k}\right).
\end{equation*}

\end{theorem}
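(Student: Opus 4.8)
The plan is to construct an explicit algebra homomorphism $\Phi_k: TL_k(2) \to \operatorname{End}_{SU(2)}(V^{\otimes k})$ and then show it is bijective by a dimension count. The natural module $V = \CC^2$ for $SU(2)$ is self-dual, so there is an $SU(2)$-equivariant map $\cup: \CC \to V \otimes V$ (a ``cup'') and $\cap: V \otimes V \to \CC$ (a ``cap''), unique up to scalar; choosing compatible normalizations, the composite $\cap \circ \cup: \CC \to \CC$ is multiplication by $-2$ (or $2$ depending on sign conventions — the key point is the magnitude is $\dim V = 2$). First I would set $E_i := \operatorname{id}_V^{\otimes (i-1)} \otimes (\cup \circ \cap) \otimes \operatorname{id}_V^{\otimes(k-i-1)} \in \operatorname{End}_{SU(2)}(V^{\otimes k})$, matching the diagrammatic generator $e_i$ under the pictorial description already given in the excerpt. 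Then I would verify the three Temperley--Lieb relations hold for the $E_i$: the relation $E_i^2 = 2 E_i$ follows from $\cap \circ \cup = \pm 2 \cdot \operatorname{id}_\CC$ together with the sign bookkeeping (the square of the snake introduces exactly one closed loop, contributing the loop value $2$); the relation $E_i E_{i\pm 1} E_i = E_i$ is the ``zig-zag'' / snake identity for the cup and cap (a straightening of an S-shaped strand), which holds on the nose for any rigid object; and the far-commutativity $E_i E_j = E_j E_i$ for $|i-j|>1$ is immediate since the maps act on disjoint tensor factors. By the universal property of the presentation of $TL_k(2)$, these verified relations yield a well-defined algebra homomorphism $\Phi_k$.

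Next I would show $\Phi_k$ is surjective. The cleanest route is to invoke Schur--Weyl type / double centralizer reasoning: $\operatorname{End}_{SU(2)}(V^{\otimes k})$ is spanned by $SU(2)$-equivariant endomorphisms, and by the first fundamental theorem of invariant theory for $SL_2$ (equivalently $SU(2)$), every such map is a linear combination of the planar ``diagram'' maps built from $\cup$, $\cap$, and identities — precisely the images of the non-crossing diagram basis of $TL_k(2)$. Alternatively, one can cite the classical fact that $\operatorname{End}_{SU(2)}(V^{\otimes k})$ is generated as an algebra by the $E_i$ (this is essentially the statement that the Temperley--Lieb relations are \emph{all} the relations, which is what we are proving, so the invariant-theory argument is the honest one). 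Either way we get that $\Phi_k$ is onto.

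Finally I would match dimensions. The dimension of $TL_k(2)$ is the Catalan number $C_k = \frac{1}{k+1}\binom{2k}{k}$, the number of non-crossing perfect matchings of $2k$ points (equivalently, non-crossing diagrams with $k$ nodes top and bottom). On the other side, by complete reducibility of $SU(2)$-representations and the Clebsch--Gordon rule $V \otimes V(a) \cong V(a-1) \oplus V(a+1)$, the multiplicity of the trivial module $V(0)$ in $V^{\otimes k}$ — hence $\dim \operatorname{End}_{SU(2)}(V^{\otimes k})= \sum_a (\text{mult. of } V(a) \text{ in } V^{\otimes k})^2$ — is computed by counting lattice paths from $0$ to $0$ of length $k$ in the representation graph of $SU(2)$ (the half-line $\NN$), and the generating-function / reflection-principle count gives exactly $C_k$ for the endomorphism dimension. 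Since $\Phi_k$ is a surjection between finite-dimensional vector spaces of equal dimension $C_k$, it is an isomorphism. The main obstacle I expect is the careful sign/normalization bookkeeping: because $V$ is symplectic rather than orthogonally self-dual, the cup and cap are antisymmetric, which forces signs in the straightening relations and in the value of a closed loop; one must choose a consistent normalization (and possibly rescale the $E_i$, or absorb a sign into the identification of $e_i$ with $E_i$) so that the loop value comes out to $+2$ and both snake identities hold simultaneously. Verifying surjectivity rigorously — i.e.\ that the planar diagrams really span all invariants, rather than merely generate a subalgebra — is the other substantive point, and it is most cleanly handled by appeal to the first fundamental theorem for $SL_2$ invariants.
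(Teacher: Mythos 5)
The paper does not actually prove this theorem: it is imported from \cite{West95} as a known result, so there is no internal argument to compare yours against. Judged on its own, your proposal is the standard proof and is essentially correct. The sign issue you flag is real but harmless: with the antisymmetric (symplectic) cup and cap one of the zig-zag identities or the loop value picks up a sign, and this is repaired either by rescaling the cup and cap by $\sqrt{-1}$ or by invoking the isomorphism $TL_k(-2)\cong TL_k(2)$ given by $e_i\mapsto -e_i$; either fix makes $E_i^2=2E_i$ and both snake identities hold simultaneously. The one step you should spell out more carefully is surjectivity: the first fundamental theorem for $SL_2$ identifies $\operatorname{End}_{SU(2)}(V^{\otimes k})$ with $(V^{\otimes 2k})^{SU(2)}$ and says this space is spanned by the contraction maps attached to \emph{all} perfect matchings of the $2k$ boundary points, not only the non-crossing ones. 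To conclude that the image of $TL_k(2)$ is everything, you need either (a) the Pl\"{u}cker relation coming from $\Lambda^3 V=0$, which resolves a crossing into a linear combination of its two planar smoothings, or (b) the observation that the flip of adjacent tensor factors equals $\operatorname{id}-E_i$ (up to the sign convention), since $E_i$ is, up to scalar, the projection onto $\Lambda^2 V$ in positions $i,i+1$; hence every matching contraction, crossing or not, already lies in the image. With that supplied, the dimension count finishes the argument: both sides have dimension the Catalan number $C_k$, the endomorphism side because $\sum_a m_a^2$ counts pairs of length-$k$ paths in the representation graph of $SU(2)$ with a common endpoint, equivalently returning Dyck paths of length $2k$. (One small slip in your write-up: it is $V^{\otimes 2k}$, not $V^{\otimes k}$, whose trivial-isotypic multiplicity equals $C_k$; the quantity $\sum_a m_a^2$ that you display is the correct one.)
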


Thus, we have a diagrammatic presentation for the endomorphism algebra End$_{SU(2)}(V^{\otimes k})$.  

We are now ready to give an example of a monoidal $\CC$-linear category given by generators and relations.  In particular, we can generalize this description and obtain the Temperley--Lieb category $TL(\delta)$ by allowing the number of vertices on top and bottom to vary.  Thus, $TL(\delta)$ can be defined as the monoidal $\CC$-linear category generated by one object $\mathlarger{\mathlarger{\mathlarger{\mathlarger{\cdot}}}}$ and the morphisms 

\begin{center}
\begin{tikzpicture}[scale=.5, baseline=0]
	\begin{pgfonlayer}{nodelayer}
		\node  (0) at (0, 1) {};
		\node  (1) at (0, -1) {};
	\end{pgfonlayer}
	\begin{pgfonlayer}{edgelayer}
		\draw (0.center) to (1.center);
	\end{pgfonlayer}
\end{tikzpicture} ,\hspace{10mm} \begin{tikzpicture}[scale=.5, baseline=6]
	\begin{pgfonlayer}{nodelayer}
		\node  (0) at (0, 0) {};
		\node  (1) at (1, 0) {};
	\end{pgfonlayer}
	\begin{pgfonlayer}{edgelayer}
		\draw [bend left=90, looseness=3.50] (0.center) to (1.center);
	\end{pgfonlayer}
\end{tikzpicture} , \hspace{10mm} and \begin{tikzpicture}[scale=.5, baseline=6]
	\begin{pgfonlayer}{nodelayer}
		\node  (0) at (0, 1) {};
		\node  (1) at (1, 1) {};
	\end{pgfonlayer}
	\begin{pgfonlayer}{edgelayer}
		\draw [bend right=90, looseness=3.50] (0.center) to (1.center);
	\end{pgfonlayer}
\end{tikzpicture} .
\end{center}

\noindent Composition is given by vertical concatenation, when this is possible.  The monoidal product is given by horizontal concatenation.  These operations are subject to the same relations as above, namely isotopy equivalence and a factor of $\delta$ gets multiplied for each closed connected component deleted.  There is then a fully faithful functor $$TL(2)\  \xrightarrow{\cong}\ SU(2)\text{-}\textbf{mod}$$ given on objects by $\mathlarger{\mathlarger{\mathlarger{\mathlarger{\cdot}}}}^{\otimes k}\mapsto V^{\otimes k}$.  This functor defines an equivalence into $SU(2)\text{-}\textbf{mod}_V$.  From this equivalence, we have a diagrammatic basis for the spaces of $SU(2)$-invariant homomorphisms, Hom$_{SU(2)}\left(V^{\otimes k },V^{\otimes \ell}\right)$ for all $k, \ell\in\ZZ$.  In particular, the non-crossing diagrams with $k$ nodes on bottom and $\ell$ nodes on top and where each node has valence precisely $1$ form this basis.

For this paper, the categories we work with will be semisimple.  That is, every object in the category will be isomorphic to the direct sum of simple objects.


\section{Categories with Irreducible \texorpdfstring{$\mathbf{C}_n$}{Cn}-modules as Objects}
\label{chpt:CnIrr}


Let us explore some new families of categories.  We will again use the convention that the empty diagram is the morphism from $(0)$ to $(0)$ which represents multiplication by $1$ where $(0)=\bar{0}$ is the identity object.

\begin{definition}
Let $\mathcal{C}_n^{\text{irr}}$ be the $\CC$-linear monoidal category with objects generated by $a\in \ZZ/n\ZZ$ with the tensor product being defined by concatenation.  Denote the concatenation of the integers $a_1, a_2, \dots , a_k$ as $[a_1, a_2, \dots , a_k]$.  The morphisms are generated by the following diagrams:

\begin{center}
\begin{tikzpicture}
	\begin{pgfonlayer}{nodelayer}
		\node   (0) at (-4, 0) {};
		\node   (1) at (-3, 0) {};
		\node   (2) at (-3.5, 2) {};
		\node   (3) at (-3.5, 1) {};
		\node   (4) at (-6, 2) {};
		\node   (5) at (-6, 0) {};
		\node   (6) at (-0.5, 2) {};
		\node   (7) at (0.5, 2) {};
		\node   (8) at (0, 1) {};
		\node   (9) at (0, 0) {};
		\node   (10) at (-6, 2.5) {$a$};
		\node   (11) at (-6, -0.5) {$a$};
		\node   (12) at (-3.5, 2.5) {$a+b$};
		\node   (13) at (-4, -0.5) {$a$};
		\node   (14) at (-3, -0.5) {$b$};
		\node   (15) at (0, -0.5) {$c=a+b$};
		\node   (16) at (-0.5, 2.5) {$a$};
		\node   (17) at (0.5, 2.5) {$b$};
		\node   (18) at (-5, 1) {, };
		\node   (19) at (-1.75, 1) {, and};
		\node   (20) at (1,1) {.};
	\end{pgfonlayer}
	\begin{pgfonlayer}{edgelayer}
		\draw (4.center) to (5.center);
		\draw [bend left=90, looseness=3.50] (0.center) to (1.center);
		\draw (3.center) to (2.center);
		\draw (8.center) to (9.center);
		\draw [bend right=90, looseness=3.50] (6.center) to (7.center);
	\end{pgfonlayer}
\end{tikzpicture}
\end{center}

\noindent where $a,\ b,\text{ and }c\in \ZZ/n\ZZ$.  We will sometimes refer to these as the identity diagram, the merge diagram, and the split diagram respectively.  

We impose the following relations:

\begin{center}
\begin{equation}
\label{eqn:CnnRelation1}
\begin{tikzpicture}[scale=.75, baseline=12]
	\begin{pgfonlayer}{nodelayer}
		\node   (0) at (-3, -1) {};
		\node   (1) at (-3, 1) {};
		\node   (2) at (-2, 1) {};
		\node   (3) at (-1, 1) {};
		\node   (4) at (-1, 3) {};
		\node   (5) at (-1.5, 0) {};
		\node   (6) at (-1.5, -1) {};
		\node   (7) at (-2.5, 2) {};
		\node   (8) at (-2.5, 3) {};
		\node   (9) at (1, -1) {};
		\node   (10) at (2, -1) {};
		\node   (11) at (1.5, 0) {};
		\node   (12) at (1.5, 2) {};
		\node   (13) at (2, 3) {};
		\node   (14) at (1, 3) {};
		\node   (15) at (-2.5, 3.5) {$a$};
		\node   (16) at (-1, 3.5) {$b$};
		\node   (17) at (-3, -1.5) {$a'$};
		\node   (18) at (-1.5, -1.5) {$b'$};
		\node   (19) at (1, -1.5) {$a'$};
		\node   (20) at (2, -1.5) {$b'$};
		\node   (21) at (1, 3.5) {$a$};
		\node   (22) at (2, 3.5) {$b$};
		\node   (23) at (0.25, 1) {$=$};
	\end{pgfonlayer}
	\begin{pgfonlayer}{edgelayer}
		\draw (0.center) to (1.center);
		\draw [bend left=90, looseness=3.50] (1.center) to (2.center);
		\draw [bend right=90, looseness=3.50] (2.center) to (3.center);
		\draw (5.center) to (6.center);
		\draw (7.center) to (8.center);
		\draw (4.center) to (3.center);
		\draw [bend right=90, looseness=3.50] (14.center) to (13.center);
		\draw (12.center) to (11.center);
		\draw [bend left=90, looseness=3.50] (9.center) to (10.center);
	\end{pgfonlayer}
\end{tikzpicture}, \hspace{20mm}
\begin{tikzpicture}[scale=.75, baseline=12]
	\begin{pgfonlayer}{nodelayer}
		\node   (0) at (-1, -1) {};
		\node   (1) at (-1, 1) {};
		\node   (2) at (-2, 1) {};
		\node   (3) at (-3, 1) {};
		\node   (4) at (-3, 3) {};
		\node   (5) at (-2.5, 0) {};
		\node   (6) at (-2.5, -1) {};
		\node   (7) at (-1.5, 2) {};
		\node   (8) at (-1.5, 3) {};
		\node   (9) at (1, -1) {};
		\node   (10) at (2, -1) {};
		\node   (11) at (1.5, 0) {};
		\node   (12) at (1.5, 2) {};
		\node   (13) at (2, 3) {};
		\node   (14) at (1, 3) {};
		\node   (15) at (-3, 3.5) {$a$};
		\node   (16) at (-1.5, 3.5) {$b$};
		\node   (17) at (-1, -1.5) {$b'$};
		\node   (18) at (-2.5, -1.5) {$a'$};
		\node   (19) at (1, -1.5) {$a'$};
		\node   (20) at (2, -1.5) {$b'$};
		\node   (21) at (1, 3.5) {$a$};
		\node   (22) at (2, 3.5) {$b$};
		\node   (23) at (0.25, 1) {$=$};
	\end{pgfonlayer}
	\begin{pgfonlayer}{edgelayer}
		\draw (0.center) to (1.center);
		\draw [bend right=90, looseness=3.50] (1.center) to (2.center);
		\draw [bend left=90, looseness=3.50] (2.center) to (3.center);
		\draw (5.center) to (6.center);
		\draw (7.center) to (8.center);
		\draw (4.center) to (3.center);
		\draw [bend right=90, looseness=3.50] (14.center) to (13.center);
		\draw (12.center) to (11.center);
		\draw [bend left=90, looseness=3.50] (9.center) to (10.center);
	\end{pgfonlayer}
\end{tikzpicture}, 
\end{equation}

\begin{equation}
\label{eqn:CnnRelation2}
\begin{tikzpicture}[scale=.75, baseline=18]
	\begin{pgfonlayer}{nodelayer}
		\node   (9) at (-2, -1) {};
		\node   (10) at (-1, -1) {};
		\node   (11) at (-1.5, 0) {};
		\node   (12) at (-1.5, 1) {};
		\node   (13) at (-1, 2) {};
		\node   (14) at (-2, 2) {};
		\node   (19) at (-2, -1.5) {$a$};
		\node   (20) at (-1, -1.5) {$b$};
		\node   (21) at (-2, 2.5) {$a$};
		\node   (22) at (-1, 2.5) {$b$};
		\node   (23) at (0, 0.5) {$=$};
		\node   (24) at (1, 2) {};
		\node   (25) at (2, 2) {};
		\node   (26) at (1, -1) {};
		\node   (27) at (2, -1) {};
		\node   (28) at (1, 2.5) {$a$};
		\node   (29) at (2, 2.5) {$b$};
		\node   (30) at (1, -1.5) {$a$};
		\node   (31) at (2, -1.5) {$b$};
	\end{pgfonlayer}
	\begin{pgfonlayer}{edgelayer}
		\draw [bend right=90, looseness=3.50] (14.center) to (13.center);
		\draw (12.center) to (11.center);
		\draw [bend left=90, looseness=3.50] (9.center) to (10.center);
		\draw (24.center) to (26.center);
		\draw (25.center) to (27.center);
	\end{pgfonlayer}
\end{tikzpicture}, \hspace{10mm}
\begin{tikzpicture}[scale=.75, baseline=18]
	\begin{pgfonlayer}{nodelayer}
		\node   (23) at (0, 0.5) {$=$};
		\node   (25) at (1, 2.5) {};
		\node   (27) at (1, -1.5) {};
		\node   (29) at (1, 3) {$a$};
		\node   (31) at (1, -2) {$a$};
		\node   (33) at (-1.5, 2.5) {};
		\node   (34) at (-1.5, 1.5) {};
		\node   (35) at (-2, 0.5) {};
		\node   (36) at (-1, 0.5) {};
		\node   (37) at (-1.5, -0.5) {};
		\node   (38) at (-1.5, -1.5) {};
		\node   (39) at (-1.5, -2) {$a$};
		\node   (40) at (-1.5, 3) {$a$};
	\end{pgfonlayer}
	\begin{pgfonlayer}{edgelayer}
		\draw (25.center) to (27.center);
		\draw (33.center) to (34.center);
		\draw [bend left=90, looseness=3.50] (35.center) to (36.center);
		\draw [bend left=90, looseness=3.50] (36.center) to (35.center);
		\draw (38.center) to (37.center);
	\end{pgfonlayer}
\end{tikzpicture}, \hspace{10mm}
\begin{tikzpicture}[scale=.75, baseline=12]
	\begin{pgfonlayer}{nodelayer}
		\node   (0) at (-1, 2) {$0$};
		\node   (1) at (-1, 1.75) {};
		\node   (2) at (-1, 1) {};
		\node   (3) at (-1.5, 0.5) {};
		\node   (4) at (-0.5, 0.5) {};
		\node   (5) at (-1, 0) {};
		\node   (6) at (-1, -0.75) {};
		\node   (7) at (-1, -1) {$0$};
		\node   (8) at (0, 0.5) {$=$};
		\node   (9) at (0.5, 0.5) {$1$};
	\end{pgfonlayer}
	\begin{pgfonlayer}{edgelayer}
		\draw (1.center) to (2.center);
		\draw (5.center) to (6.center);
		\draw [bend left=90, looseness=1.75] (3.center) to (4.center);
		\draw [bend right=90, looseness=1.75] (3.center) to (4.center);
	\end{pgfonlayer}
\end{tikzpicture},
\end{equation}
 
\begin{equation}
\label{eqn:CnnRelation3}
\begin{tikzpicture}[scale=.75, baseline=28]
	\begin{pgfonlayer}{nodelayer}
		\node   (0) at (-4, 0) {};
		\node   (1) at (-3, 0) {};
		\node   (2) at (-2, 0) {};
		\node   (3) at (-2.5, 1) {};
		\node   (4) at (-3.25, 2) {};
		\node   (5) at (-3.25, 3) {};
		\node   (6) at (2, 0) {};
		\node   (7) at (0, 0) {};
		\node   (8) at (1, 0) {};
		\node   (9) at (0.5, 1) {};
		\node   (10) at (1.25, 2) {};
		\node   (11) at (1.25, 3) {};
		\node   (12) at (-4, -0.5) {$a$};
		\node   (13) at (-3, -0.5) {$b$};
		\node   (14) at (-2, -0.5) {$c$};
		\node   (15) at (-3.25, 3.5) {$a+b+c$};
		\node   (16) at (0, -0.5) {$a$};
		\node   (17) at (1, -0.5) {$b$};
		\node   (18) at (2, -0.5) {$c$};
		\node   (19) at (1.25, 3.5) {$a+b+c$};
		\node   (20) at (-1, 1) {$=$};
	\end{pgfonlayer}
	\begin{pgfonlayer}{edgelayer}
		\draw [bend left=90, looseness=3.50] (1.center) to (2.center);
		\draw [in=105, out=90, looseness=2.75] (0.center) to (3.center);
		\draw (5.center) to (4.center);
		\draw [bend left=90, looseness=3.50] (7.center) to (8.center);
		\draw [in=75, out=90, looseness=2.75] (6.center) to (9.center);
		\draw (11.center) to (10.center);
	\end{pgfonlayer}
\end{tikzpicture}, \hspace{15mm} and 
\begin{tikzpicture}[scale=.75, baseline=6]
	\begin{pgfonlayer}{nodelayer}
		\node   (0) at (-3, 2) {};
		\node   (1) at (-2, 2) {};
		\node   (2) at (-1, 2) {};
		\node   (3) at (-1.5, 1) {};
		\node   (4) at (-2.25, 0) {};
		\node   (5) at (-2.25, -1) {};
		\node   (6) at (3, 2) {};
		\node   (7) at (1, 2) {};
		\node   (8) at (2, 2) {};
		\node   (9) at (1.5, 1) {};
		\node   (10) at (2.25, 0) {};
		\node   (11) at (2.25, -1) {};
		\node   (12) at (-3, 2.5) {$a$};
		\node   (13) at (-2, 2.5) {$b$};
		\node   (14) at (-1, 2.5) {$c$};
		\node   (15) at (-2.25, -1.5) {$a+b+c$};
		\node   (16) at (1, 2.5) {$a$};
		\node   (17) at (2, 2.5) {$b$};
		\node   (18) at (3, 2.5) {$c$};
		\node   (19) at (2.25, -1.5) {$a+b+c$};
		\node   (20) at (0, 1) {$=$};
	\end{pgfonlayer}
	\begin{pgfonlayer}{edgelayer}
		\draw [bend right=90, looseness=3.50] (1.center) to (2.center);
		\draw [in=-105, out=-90, looseness=2.75] (0.center) to (3.center);
		\draw (5.center) to (4.center);
		\draw [bend right=90, looseness=3.50] (7.center) to (8.center);
		\draw [in=-75, out=-90, looseness=2.75] (6.center) to (9.center);
		\draw (11.center) to (10.center);
	\end{pgfonlayer}
\end{tikzpicture}
\end{equation}
\end{center}

\noindent where $a,\ b,\ c,\ a',\text{ and } b' \in \ZZ/n\ZZ$, and $a+b\equiv a'+b' \mod n$.
\end{definition}

\begin{remark} It is worth noting that the when using the split map on an integer mod $n$, we must specify which two integers are the target.  For example, 

\begin{equation}\begin{tikzpicture}[scale=.5, baseline=18]
	\begin{pgfonlayer}{nodelayer}
		\node   (6) at (-1.5, 2) {};
		\node   (7) at (-0.5, 2) {};
		\node   (8) at (-1, 1) {};
		\node   (9) at (-1, 0) {};
		\node   (15) at (-1, -0.5) {$c$};
		\node   (16) at (-1.5, 2.5) {$a$};
		\node   (17) at (-0.5, 2.5) {$b$};
		\node   (19) at (0.5, 1) { and};
		\node   (20) at (1.5, 2) {};
		\node   (21) at (2.5, 2) {};
		\node   (22) at (2, 1) {};
		\node   (23) at (2, 0) {};
		\node   (24) at (2, -0.5) {$c$};
		\node   (25) at (1.5, 2.5) {$a'$};
		\node   (26) at (2.5, 2.5) {$b'$};
	\end{pgfonlayer}
	\begin{pgfonlayer}{edgelayer}
		\draw (8.center) to (9.center);
		\draw [bend right=90, looseness=3.50] (6.center) to (7.center);
		\draw (22.center) to (23.center);
		\draw [bend right=90, looseness=3.50] (20.center) to (21.center);
	\end{pgfonlayer}
\end{tikzpicture}
\end{equation} are equal if and only if $a'\equiv a \mod n$ and $b'\equiv b \mod n$.  
\end{remark}

Given a diagram $d$, we will denote $s_d$ as the number of split diagrams used in the construction of $d$ and $m_d$ as the number of merge diagrams used in $d$.

\begin{lemma}
\label{lem:TensorDiff}
The difference $s_d-m_d$ is precisely the difference between the number of tensor factors in the target and the number of tensor factors in the source.
\end{lemma}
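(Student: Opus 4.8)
The plan is to argue by induction on the way the diagram $d$ is assembled from the three generating morphisms (identity, merge, split) using composition $\circ$ and the monoidal product $\otimes$. For an object $w = [a_1, \dots, a_k]$ of $\mathcal{C}_n^{\text{irr}}$, write $\ell(w) = k$ for its \emph{length}, i.e.\ its number of tensor factors, and note that $\ell$ is additive under concatenation of objects. For any morphism $f$ put $\tau(f) := \ell(\mathrm{tgt}\, f) - \ell(\mathrm{src}\, f)$, which is exactly the quantity the lemma concerns. So the goal is to show $\tau(d) = s_d - m_d$ for every diagram $d$, and the strategy is to observe that both sides are additive under $\circ$ and $\otimes$ and then check agreement on the three generators.

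First I would dispatch the base case. The identity strand has source $[a]$ and target $[a]$, so $\tau = 1 - 1 = 0$, matching $s_d - m_d = 0 - 0$. The merge diagram has source $[a,b]$ and target $[a+b]$, so $\tau = 1 - 2 = -1 = 0 - 1$. The split diagram has source $[c]$ and target $[a,b]$, so $\tau = 2 - 1 = 1 = 1 - 0$. Hence the identity holds on generators. A generator occurring inside a larger picture, flanked by through-strands, is by definition an instance of $\mathrm{id}^{\otimes j} \otimes (\text{generator}) \otimes \mathrm{id}^{\otimes k}$, and so is covered by the $\otimes$-step below.

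Next, the inductive step. If $d = d_2 \circ d_1$, then $\mathrm{src}\, d = \mathrm{src}\, d_1$, $\mathrm{tgt}\, d = \mathrm{tgt}\, d_2$, and $\mathrm{tgt}\, d_1 = \mathrm{src}\, d_2$, whence $\tau(d) = \ell(\mathrm{tgt}\, d_2) - \ell(\mathrm{src}\, d_1) = \tau(d_2) + \tau(d_1)$; also $s_d = s_{d_1} + s_{d_2}$ and $m_d = m_{d_1} + m_{d_2}$, since the generators building $d$ are those building $d_1$ together with those building $d_2$. By the inductive hypothesis $\tau(d_i) = s_{d_i} - m_{d_i}$, so $\tau(d) = s_d - m_d$. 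If instead $d = d_1 \otimes d_2$, then $\mathrm{src}\, d$ (resp.\ $\mathrm{tgt}\, d$) is the concatenation of the sources (resp.\ targets) of $d_1$ and $d_2$, so additivity of $\ell$ gives $\tau(d) = \tau(d_1) + \tau(d_2)$; again $s_d = s_{d_1} + s_{d_2}$, $m_d = m_{d_1} + m_{d_2}$, and the inductive hypothesis closes this case. Since every diagram arises by iterated $\circ$ and $\otimes$ from the generators, the induction is complete.

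I do not expect a real obstacle here: the content is pure bookkeeping, and the only point needing care is the inductive set-up — recording precisely that every diagram is built from the three named generators via composition and monoidal product, and that $\tau$ and $d \mapsto s_d - m_d$ are both additive under these two operations, so they must coincide once they coincide on generators. As a closing remark, since $\tau(d)$ depends only on the source and target of the morphism represented by $d$, the argument also shows that $s_d - m_d$ is an invariant of that morphism, independent of the chosen diagram; equivalently, both sides of each defining relation of $\mathcal{C}_n^{\text{irr}}$ carry the same value of $s - m$, which can be checked directly as a sanity verification.
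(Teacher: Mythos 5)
Your proof is correct and is essentially the same bookkeeping argument as the paper's: each generator changes the number of tensor factors by $0$, $-1$, or $+1$, and these local contributions sum to the global difference. You package this as a structural induction on $\circ$ and $\otimes$, whereas the paper phrases it as a bottom-to-top sweep through the diagram, but the content is identical (and your version even fixes a sign slip in the paper's final displayed equality).
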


\begin{proof}
Fix a diagram $d\in \operatorname{Hom}_{\mathcal{C}^{\text{irr}}}\left(\bigotimes\limits_{i=1}^{k} a_i,\bigotimes\limits_{j=1}^{\ell} b_j\right)$ with $k,\ell\in\NN$.  Notice, there are $\ell$ tensor factors corresponding to $\ell$ strings on the bottom of $d$.  Reading the $d$ from bottom to top, observe that a merge diagram will subtract one from the number of tensor factors of the top of $d$, and a split diagram will add one to the number of tensor factors of the top of the $d$.  Furthermore, an identity strand will not change the number of tensor factors.  Thus, $s_d-m_d=k-\ell$.
\end{proof}

\begin{lemma}
\label{lem:AnyDiagIsIdentity}
Any diagram in $\operatorname{Hom}_{\mathcal{C}^{\text{irr}}}\left(a,a\right)$ is equal to \begin{tikzpicture}[scale=.5, baseline=0]
	\begin{pgfonlayer}{nodelayer}
		\node  (0) at (0, 1) {};
		\node  (1) at (0, -1) {};
		\node  (2) at (0,-1.5) {$a$};
		\node  (3) at (0,1.5) {$a$};
	\end{pgfonlayer}
	\begin{pgfonlayer}{edgelayer}
		\draw (0.center) to (1.center);
	\end{pgfonlayer}
\end{tikzpicture} as morphisms in $\mathcal{C}_n^{\text{irr}}$.
\end{lemma}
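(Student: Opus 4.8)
The plan is to argue by induction on $N(d):=s_d+m_d$, the total number of merge and split diagrams used to build $d$. By Lemma~\ref{lem:TensorDiff}, since the source and target of $d$ each have a single tensor factor, $s_d=m_d$; in particular $N(d)$ is even. If $N(d)=0$, then $d$ is obtained from identity diagrams alone by $\otimes$ and $\circ$, hence is a juxtaposition of straight strands; as the source and target are both the single object $a$, exactly one strand is present and $d=\operatorname{id}_a$.

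Now suppose $N(d)\ge 2$. Reading $d$ upward from the bottom, a merge diagram has two inputs while only one strand is present, so the first generator the strand $a$ meets is a split; hence $d=d'\circ s_{b,c}$ for some split $s_{b,c}\colon a\to b\otimes c$ with $b+c\equiv a\pmod n$ and some $d'\colon b\otimes c\to a$. The crux is the following claim: using only relations~\eqref{eqn:CnnRelation1}, \eqref{eqn:CnnRelation2}, and \eqref{eqn:CnnRelation3}, the diagram $d'$ can be rewritten as $d'=d''\circ m_{b,c}$ for some diagram $d''\colon a\to a$, with $N$ not increasing. Granting this, relation~\eqref{eqn:CnnRelation2} gives $m_{b,c}\circ s_{b,c}=\operatorname{id}_a$, so $d=d''\circ\bigl(m_{b,c}\circ s_{b,c}\bigr)=d''$; since $N(d'')\le N(d)-2<N(d)$ and $d''$ is again a diagram from $a$ to $a$, the induction hypothesis yields $d=d''=\operatorname{id}_a$.

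To prove the claim --- an instance of the principle that the bialgebra-type relations~\eqref{eqn:CnnRelation1} let merges be pushed below splits --- color each strand segment of $d'$ \emph{left}, \emph{right}, or \emph{mixed} according to whether, tracing strands downward, it reaches only the input $b$, only the input $c$, or both. The top strand $a$ is mixed and neither input is, so there is a lowest mixed segment; it must be the output of a merge diagram (a split output would have a mixed, strictly lower input), and by minimality its two inputs are an unmixed left segment $b^{\ast}$ and an unmixed right segment $c^{\ast}$. Planarity forces $b^{\ast}$ and $c^{\ast}$ to be adjacent, with $b^{\ast}$ the rightmost strand of the left region at that height and $c^{\ast}$ the leftmost of the right region, so that $b^{\ast}$ descends to the input $b$ as the right-hand output of a chain of split diagrams and, symmetrically, $c^{\ast}$ descends to $c$ as the left-hand output of a chain of splits. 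Now apply relation~\eqref{eqn:CnnRelation1} repeatedly (in the version appropriate to a right-hand, respectively left-hand, output, and relation~\eqref{eqn:CnnRelation3} to reassociate as needed) to commute the merge $m_{b^{\ast},c^{\ast}}$ downward past these splits one at a time; each step relocates a split from below the travelling merge to above it and leaves $N$ unchanged, and when no split remains below, the merge has become $m_{b,c}$ sitting directly above the two inputs, so $d'=d''\circ m_{b,c}$. Whenever a merge-split bigon is produced, relation~\eqref{eqn:CnnRelation2} removes it, which only decreases $N$.

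I expect the main obstacle to be making this coloring argument fully rigorous: confirming that the lowest mixed segment is well defined, that it is the output of a merge of two \emph{adjacent} unmixed segments (so the moves using~\eqref{eqn:CnnRelation1} are legal planar rewrites), and that the process terminates. For termination, a suitable monovariant is the number of split diagrams lying below the travelling merge that are ancestors of one of its two inputs; this strictly decreases with each application of~\eqref{eqn:CnnRelation1}. Everything else is routine, since relations~\eqref{eqn:CnnRelation2} and \eqref{eqn:CnnRelation3} are exactly the bubble-removal and (co)associativity identities needed to justify each local step.
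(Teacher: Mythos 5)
Your plan is workable in outline, but it attacks the problem from the opposite end of the diagram than the paper does, and that choice is what generates all of your difficulties. You peel off the \emph{bottommost} split $s_{b,c}\colon a\to b\otimes c$ and then must manufacture a merge $m_{b,c}$ at the very bottom of $d'$ by dragging a merge downward through an arbitrary diagram; this forces you to invoke the compatibility relations \eqref{eqn:CnnRelation1}, a coloring argument, and a termination monovariant. The paper instead isolates a \emph{highest} split in $d$. By maximality, the portion $d_3$ of the diagram sitting above that split contains only identity strands and merges, and by Lemma~\ref{lem:TensorDiff} it contains at least one merge; since $d_3$ is then just a planar tree of merges, the associativity relations \eqref{eqn:CnnRelation3} alone suffice to re-associate it so that the two outputs of the chosen split are merged first, after which the split--merge bigon cancels by \eqref{eqn:CnnRelation2} and the split count drops by one. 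No use of \eqref{eqn:CnnRelation1}, no coloring, and no termination analysis is needed. Your approach does have the minor virtue of showing that a merge can be commuted all the way to the bottom, which is a reusable normal-form statement, but for this lemma it is strictly harder.

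Two concrete points in your write-up would need repair if you pursued it. First, the claim that the minimal mixed segment's input $b^{\ast}$ ``descends to the input $b$ as the right-hand output of a chain of split diagrams'' is false in general: the $b$-only region can itself contain merges (e.g., split $b$ into $b_1,b_2$, split $b_1$ again, then merge two of the pieces back together before anything mixed happens), so $b^{\ast}$ may be the output of a merge rather than of a split. Your parenthetical appeal to \eqref{eqn:CnnRelation3} covers this case in principle, but then your proposed monovariant (the number of splits below the travelling merge that are ancestors of its inputs) is only non-increasing, not strictly decreasing, under the associativity moves, so a priori the rewriting could cycle. A monovariant that does work is the total number of generating boxes lying strictly below the travelling merge: it drops under each application of \eqref{eqn:CnnRelation1}, of \eqref{eqn:CnnRelation3} in the relevant direction, and of the bigon removal in \eqref{eqn:CnnRelation2}. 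With those two fixes your argument goes through, but the paper's top-down choice makes both issues evaporate.
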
 

\begin{proof}
Let 
\begin{equation}\begin{tikzpicture}[scale=.5, baseline=0]
	\begin{pgfonlayer}{nodelayer}
		\node   (0) at (-1, 2) {};
		\node   (1) at (-1, 1.25) {};
		\node   (2) at (-2, 1) {};
		\node   (3) at (0, 1) {};
		\node   (4) at (0, 0) {};
		\node   (5) at (-2, 0) {};
		\node   (6) at (-1, -1) {};
		\node   (7) at (-1, -0.25) {};
		\node   (8) at (-1, 0.5) {$d$};
		\node   (9) at (-1, 2.5) {$a$};
		\node   (10) at (-1, -1.5) {$a$};
	\end{pgfonlayer}
	\begin{pgfonlayer}{edgelayer}
		\draw (0.center) to (1.center);
		\draw (2.center) to (5.center);
		\draw (3.center) to (4.center);
		\draw (4.center) to (5.center);
		\draw (2.center) to (3.center);
		\draw (7.center) to (6.center);
	\end{pgfonlayer}
\end{tikzpicture}
\end{equation} 
be a diagram in Hom$_{\mathcal{C}^{\text{irr}}}\left(a,a\right)$.  From Lemma \ref{lem:TensorDiff}, the number of merge diagrams in $d$ is equal to the number of split diagrams in $d$.  So, we induct on the number of split diagrams in $d$, $s_d$.  If $s_d=0$, then $m_d=0$ as well, and $d$ must be the identity strand on $a$.

Now, assume we have that any diagram $d'$ is equal to the identity strand on $a$ for $s_{d'}<k$ for some $k$.  Suppose $s_{d'}=k$.  Then, we can isolate a highest split diagram in $d'$.  Thus we have 

\begin{center}
\begin{tikzpicture}[scale=.75, baseline=10]
	\begin{pgfonlayer}{nodelayer}
		\node   (0) at (-1, 2) {};
		\node   (1) at (-1, 1.25) {};
		\node   (2) at (-2, 1) {};
		\node   (3) at (0, 1) {};
		\node   (4) at (0, 0) {};
		\node   (5) at (-2, 0) {};
		\node   (6) at (-1, -1) {};
		\node   (7) at (-1, -0.25) {};
		\node   (8) at (-1, 0.5) {$d$};
		\node   (9) at (-1, 2.5) {$a$};
		\node   (10) at (-1, -1.5) {$a$};
	\end{pgfonlayer}
	\begin{pgfonlayer}{edgelayer}
		\draw (0.center) to (1.center);
		\draw (2.center) to (5.center);
		\draw (3.center) to (4.center);
		\draw (4.center) to (5.center);
		\draw (2.center) to (3.center);
		\draw (7.center) to (6.center);
	\end{pgfonlayer}
\end{tikzpicture} \hspace{10mm} $=$ \hspace{10mm}
\begin{tikzpicture}[scale=.75, baseline=-20]
	\begin{pgfonlayer}{nodelayer}
		\node   (0) at (0, 3) {};
		\node   (1) at (0, 2) {};
		\node   (2) at (-2, 2) {};
		\node   (3) at (-2, 1) {};
		\node   (4) at (2, 1) {};
		\node   (5) at (2, 2) {};
		\node   (6) at (-0.5, 1) {};
		\node   (7) at (0.5, 1) {};
		\node   (8) at (0.75, 1) {};
		\node   (9) at (1.75, 1) {};
		\node   (10) at (-1.75, 1) {};
		\node   (11) at (-0.75, 1) {};
		\node   (12) at (-1.75, 0) {};
		\node   (13) at (-0.75, 0) {};
		\node   (14) at (0, 0.5) {};
		\node   (15) at (0, 0) {};
		\node   (16) at (0.75, 0) {};
		\node   (17) at (1.75, 0) {};
		\node   (18) at (-2, 0) {};
		\node   (19) at (-0.5, 0) {};
		\node   (20) at (-2, -1) {};
		\node   (21) at (-0.5, -1) {};
		\node   (22) at (0, -2) {};
		\node   (23) at (0.5, -1) {};
		\node   (24) at (2, -1) {};
		\node   (25) at (0.5, 0) {};
		\node   (26) at (2, 0) {};
		\node   (27) at (-1.75, -1) {};
		\node   (28) at (-0.75, -1) {};
		\node   (29) at (0.75, -1) {};
		\node   (30) at (1.75, -1) {};
		\node   (31) at (-1.75, -2) {};
		\node   (32) at (-0.75, -2) {};
		\node   (33) at (0.75, -2) {};
		\node   (34) at (1.75, -2) {};
		\node   (35) at (-2, -2) {};
		\node   (36) at (2, -2) {};
		\node   (37) at (2, -3) {};
		\node   (38) at (-2, -3) {};
		\node   (39) at (0, -3) {};
		\node   (40) at (0, -4) {};
		\node   (41) at (0, 1.5) {$d_3$};
		\node   (42) at (-1.25, -0.5) {$d_2$};
		\node   (43) at (1.25, -0.5) {$d_1$};
		\node   (44) at (0, -2.5) {$d_0$};
		\node   (45) at (0.5, -4) {$a$};
		\node   (46) at (0.5, 3) {$a$};
		\node   (47) at (-1.25, 0.5) {$\cdots$};
		\node   (48) at (1.25, 0.5) {$\cdots$};
		\node   (49) at (1.25, -1.5) {$\cdots$};
		\node   (50) at (-1.25, -1.5) {$\cdots$};
	\end{pgfonlayer}
	\begin{pgfonlayer}{edgelayer}
		\draw (0.center) to (1.center);
		\draw (2.center) to (5.center);
		\draw (5.center) to (4.center);
		\draw (3.center) to (2.center);
		\draw (10.center) to (12.center);
		\draw (11.center) to (13.center);
		\draw [bend right=90, looseness=1.75] (6.center) to (7.center);
		\draw (3.center) to (4.center);
		\draw (15.center) to (14.center);
		\draw (8.center) to (16.center);
		\draw (9.center) to (17.center);
		\draw (18.center) to (20.center);
		\draw (20.center) to (21.center);
		\draw (21.center) to (19.center);
		\draw (19.center) to (18.center);
		\draw (25.center) to (23.center);
		\draw (23.center) to (24.center);
		\draw (24.center) to (26.center);
		\draw (26.center) to (25.center);
		\draw (27.center) to (31.center);
		\draw (28.center) to (32.center);
		\draw (29.center) to (33.center);
		\draw (30.center) to (34.center);
		\draw (36.center) to (35.center);
		\draw (22.center) to (15.center);
		\draw (35.center) to (38.center);
		\draw (38.center) to (37.center);
		\draw (36.center) to (37.center);
		\draw (39.center) to (40.center);
	\end{pgfonlayer}
\end{tikzpicture}
\end{center}

\noindent where $d_3$ only contains identity strands and merge diagrams.  In particular, since $d_3$ has at least two tensor factors in the domain and only one tensor factor in the codomain, by Lemma \ref{lem:TensorDiff}, $d_3$ must contain at least one merge diagram.  Using the associativity relation for merge diagrams in (\ref{eqn:CnnRelation3}) iteratively, we can position a merge diagram directly above the split diagram.  Hence,

\begin{align}
\begin{tikzpicture}[scale=.5, baseline=10]
	\begin{pgfonlayer}{nodelayer}
		\node   (0) at (-1, 2) {};
		\node   (1) at (-1, 1.25) {};
		\node   (2) at (-2, 1) {};
		\node   (3) at (0, 1) {};
		\node   (4) at (0, 0) {};
		\node   (5) at (-2, 0) {};
		\node   (6) at (-1, -1) {};
		\node   (7) at (-1, -0.25) {};
		\node   (8) at (-1, 0.5) {$d$};
		\node   (9) at (-1, 2.5) {$a$};
		\node   (10) at (-1, -1.5) {$a$};
	\end{pgfonlayer}
	\begin{pgfonlayer}{edgelayer}
		\draw (0.center) to (1.center);
		\draw (2.center) to (5.center);
		\draw (3.center) to (4.center);
		\draw (4.center) to (5.center);
		\draw (2.center) to (3.center);
		\draw (7.center) to (6.center);
	\end{pgfonlayer}
\end{tikzpicture} &=
\begin{tikzpicture}[scale=.65, baseline=-20]
	\begin{pgfonlayer}{nodelayer}
		\node   (0) at (0, 3) {};
		\node   (1) at (0, 2) {};
		\node   (2) at (-2, 2) {};
		\node   (3) at (-2, 1) {};
		\node   (4) at (2, 1) {};
		\node   (5) at (2, 2) {};
		\node   (6) at (-0.5, 1) {};
		\node   (7) at (0.5, 1) {};
		\node   (8) at (0.75, 1) {};
		\node   (9) at (1.75, 1) {};
		\node   (10) at (-1.75, 1) {};
		\node   (11) at (-0.75, 1) {};
		\node   (12) at (-1.75, 0) {};
		\node   (13) at (-0.75, 0) {};
		\node   (14) at (0, 0.5) {};
		\node   (15) at (0, 0) {};
		\node   (16) at (0.75, 0) {};
		\node   (17) at (1.75, 0) {};
		\node   (18) at (-2, 0) {};
		\node   (19) at (-0.5, 0) {};
		\node   (20) at (-2, -1) {};
		\node   (21) at (-0.5, -1) {};
		\node   (22) at (0, -2) {};
		\node   (23) at (0.5, -1) {};
		\node   (24) at (2, -1) {};
		\node   (25) at (0.5, 0) {};
		\node   (26) at (2, 0) {};
		\node   (27) at (-1.75, -1) {};
		\node   (28) at (-0.75, -1) {};
		\node   (29) at (0.75, -1) {};
		\node   (30) at (1.75, -1) {};
		\node   (31) at (-1.75, -2) {};
		\node   (32) at (-0.75, -2) {};
		\node   (33) at (0.75, -2) {};
		\node   (34) at (1.75, -2) {};
		\node   (35) at (-2, -2) {};
		\node   (36) at (2, -2) {};
		\node   (37) at (2, -3) {};
		\node   (38) at (-2, -3) {};
		\node   (39) at (0, -3) {};
		\node   (40) at (0, -4) {};
		\node   (41) at (0, 1.5) {$d_3$};
		\node   (42) at (-1.25, -0.5) {$d_2$};
		\node   (43) at (1.25, -0.5) {$d_1$};
		\node   (44) at (0, -2.5) {$d_0$};
		\node   (45) at (0.5, -4) {$a$};
		\node   (46) at (0.5, 3) {$a$};
		\node   (47) at (-1.25, 0.5) {$\cdots$};
		\node   (48) at (1.25, 0.5) {$\cdots$};
		\node   (49) at (1.25, -1.5) {$\cdots$};
		\node   (50) at (-1.25, -1.5) {$\cdots$};
	\end{pgfonlayer}
	\begin{pgfonlayer}{edgelayer}
		\draw (0.center) to (1.center);
		\draw (2.center) to (5.center);
		\draw (5.center) to (4.center);
		\draw (3.center) to (2.center);
		\draw (10.center) to (12.center);
		\draw (11.center) to (13.center);
		\draw [bend right=90, looseness=1.75] (6.center) to (7.center);
		\draw (3.center) to (4.center);
		\draw (15.center) to (14.center);
		\draw (8.center) to (16.center);
		\draw (9.center) to (17.center);
		\draw (18.center) to (20.center);
		\draw (20.center) to (21.center);
		\draw (21.center) to (19.center);
		\draw (19.center) to (18.center);
		\draw (25.center) to (23.center);
		\draw (23.center) to (24.center);
		\draw (24.center) to (26.center);
		\draw (26.center) to (25.center);
		\draw (27.center) to (31.center);
		\draw (28.center) to (32.center);
		\draw (29.center) to (33.center);
		\draw (30.center) to (34.center);
		\draw (36.center) to (35.center);
		\draw (22.center) to (15.center);
		\draw (35.center) to (38.center);
		\draw (38.center) to (37.center);
		\draw (36.center) to  (37.center);
		\draw (39.center) to (40.center);
	\end{pgfonlayer}
\end{tikzpicture} &=\begin{tikzpicture}[scale=.65, baseline=-10]
	\begin{pgfonlayer}{nodelayer}
		\node   (0) at (0, 4) {};
		\node   (1) at (0, 3) {};
		\node   (2) at (-2, 3) {};
		\node   (3) at (-2, 2) {};
		\node   (4) at (2, 2) {};
		\node   (5) at (2, 3) {};
		\node   (6) at (-0.5, 1) {};
		\node   (7) at (0.5, 1) {};
		\node   (8) at (0.75, 2) {};
		\node   (9) at (1.75, 2) {};
		\node   (10) at (-1.75, 2) {};
		\node   (11) at (-0.75, 2) {};
		\node   (12) at (-1.75, 0) {};
		\node   (13) at (-0.75, 0) {};
		\node   (14) at (0, 0.5) {};
		\node   (15) at (0, 0) {};
		\node   (16) at (0.75, 0) {};
		\node   (17) at (1.75, 0) {};
		\node   (18) at (-2, 0) {};
		\node   (19) at (-0.5, 0) {};
		\node   (20) at (-2, -1) {};
		\node   (21) at (-0.5, -1) {};
		\node   (22) at (0, -2) {};
		\node   (23) at (0.5, -1) {};
		\node   (24) at (2, -1) {};
		\node   (25) at (0.5, 0) {};
		\node   (26) at (2, 0) {};
		\node   (27) at (-1.75, -1) {};
		\node   (28) at (-0.75, -1) {};
		\node   (29) at (0.75, -1) {};
		\node   (30) at (1.75, -1) {};
		\node   (31) at (-1.75, -2) {};
		\node   (32) at (-0.75, -2) {};
		\node   (33) at (0.75, -2) {};
		\node   (34) at (1.75, -2) {};
		\node   (35) at (-2, -2) {};
		\node   (36) at (2, -2) {};
		\node   (37) at (2, -3) {};
		\node   (38) at (-2, -3) {};
		\node   (39) at (0, -3) {};
		\node   (40) at (0, -4) {};
		\node   (41) at (0, 2.5) {$d_3'$};
		\node   (42) at (-1.25, -0.5) {$d_2$};
		\node   (43) at (1.25, -0.5) {$d_1$};
		\node   (44) at (0, -2.5) {$d_0$};
		\node   (45) at (0.5, -4) {$a$};
		\node   (46) at (0.5, 4) {$a$};
		\node   (47) at (-1.25, 1) {$\cdots$};
		\node   (48) at (1.25, 1) {$\cdots$};
		\node   (49) at (1.25, -1.5) {$\cdots$};
		\node   (50) at (-1.25, -1.5) {$\cdots$};
		\node   (51) at (0, 2) {};
		\node   (52) at (0, 1.5) {};
	\end{pgfonlayer}
	\begin{pgfonlayer}{edgelayer}
		\draw (0.center) to (1.center);
		\draw (2.center) to (5.center);
		\draw (5.center) to (4.center);
		\draw (3.center) to (2.center);
		\draw (10.center) to (12.center);
		\draw (11.center) to (13.center);
		\draw [bend right=90, looseness=1.75] (6.center) to (7.center);
		\draw (3.center) to (4.center);
		\draw (15.center) to (14.center);
		\draw (8.center) to (16.center);
		\draw (9.center) to (17.center);
		\draw (18.center) to (20.center);
		\draw (20.center) to (21.center);
		\draw (21.center) to (19.center);
		\draw (19.center) to (18.center);
		\draw (25.center) to (23.center);
		\draw (23.center) to (24.center);
		\draw (24.center) to (26.center);
		\draw (26.center) to (25.center);
		\draw (27.center) to (31.center);
		\draw (28.center) to (32.center);
		\draw (29.center) to (33.center);
		\draw (30.center) to (34.center);
		\draw (36.center) to (35.center);
		\draw (22.center) to (15.center);
		\draw (35.center) to (38.center);
		\draw (38.center) to (37.center);
		\draw (36.center) to (37.center);
		\draw (39.center) to (40.center);
		\draw (51.center) to (52.center);
		\draw [bend left=90, looseness=1.75] (6.center) to (7.center);
	\end{pgfonlayer}
\end{tikzpicture} \\
&=
\begin{tikzpicture}[scale=.65, baseline=-20]
	\begin{pgfonlayer}{nodelayer}
		\node   (0) at (0, 4) {};
		\node   (1) at (0, 3) {};
		\node   (2) at (-2, 3) {};
		\node   (3) at (-2, 2) {};
		\node   (4) at (2, 2) {};
		\node   (5) at (2, 3) {};
		\node   (8) at (0.75, 2) {};
		\node   (9) at (1.75, 2) {};
		\node   (10) at (-1.75, 2) {};
		\node   (11) at (-0.75, 2) {};
		\node   (12) at (-1.75, 0) {};
		\node   (13) at (-0.75, 0) {};
		\node   (14) at (0, 1) {};
		\node   (15) at (0, 0) {};
		\node   (16) at (0.75, 0) {};
		\node   (17) at (1.75, 0) {};
		\node   (18) at (-2, 0) {};
		\node   (19) at (-0.5, 0) {};
		\node   (20) at (-2, -1) {};
		\node   (21) at (-0.5, -1) {};
		\node   (22) at (0, -2) {};
		\node   (23) at (0.5, -1) {};
		\node   (24) at (2, -1) {};
		\node   (25) at (0.5, 0) {};
		\node   (26) at (2, 0) {};
		\node   (27) at (-1.75, -1) {};
		\node   (28) at (-0.75, -1) {};
		\node   (29) at (0.75, -1) {};
		\node   (30) at (1.75, -1) {};
		\node   (31) at (-1.75, -2) {};
		\node   (32) at (-0.75, -2) {};
		\node   (33) at (0.75, -2) {};
		\node   (34) at (1.75, -2) {};
		\node   (35) at (-2, -2) {};
		\node   (36) at (2, -2) {};
		\node   (37) at (2, -3) {};
		\node   (38) at (-2, -3) {};
		\node   (39) at (0, -3) {};
		\node   (40) at (0, -4) {};
		\node   (41) at (0, 2.5) {$d_3$};
		\node   (42) at (-1.25, -0.5) {$d_2$};
		\node   (43) at (1.25, -0.5) {$d_1$};
		\node   (44) at (0, -2.5) {$d_0$};
		\node   (45) at (0.5, -4) {$a$};
		\node   (46) at (0.5, 4) {$a$};
		\node   (47) at (-1.25, 1) {$\cdots$};
		\node   (48) at (1.25, 1) {$\cdots$};
		\node   (49) at (1.25, -1.5) {$\cdots$};
		\node   (50) at (-1.25, -1.5) {$\cdots$};
		\node   (51) at (0, 2) {};
		\node   (52) at (0, 1) {};
	\end{pgfonlayer}
	\begin{pgfonlayer}{edgelayer}
		\draw (0.center) to (1.center);
		\draw (2.center) to (5.center);
		\draw (5.center) to (4.center);
		\draw (3.center) to (2.center);
		\draw (10.center) to (12.center);
		\draw (11.center) to (13.center);
		\draw (3.center) to (4.center);
		\draw (15.center) to (14.center);
		\draw (8.center) to (16.center);
		\draw (9.center) to (17.center);
		\draw (18.center) to (20.center);
		\draw (20.center) to (21.center);
		\draw (21.center) to (19.center);
		\draw (19.center) to (18.center);
		\draw (25.center) to (23.center);
		\draw (23.center) to (24.center);
		\draw (24.center) to (26.center);
		\draw (26.center) to (25.center);
		\draw (27.center) to (31.center);
		\draw (28.center) to (32.center);
		\draw (29.center) to (33.center);
		\draw (30.center) to (34.center);
		\draw (36.center) to (35.center);
		\draw (22.center) to (15.center);
		\draw (35.center) to (38.center);
		\draw (38.center) to (37.center);
		\draw (36.center) to (37.center);
		\draw (39.center) to (40.center);
		\draw (51.center) to (52.center);
	\end{pgfonlayer}
\end{tikzpicture} &=
\begin{tikzpicture}[scale=.65, baseline=10]
	\begin{pgfonlayer}{nodelayer}
		\node   (0) at (-1, 2) {};
		\node   (1) at (-1, 1.25) {};
		\node   (2) at (-2, 1) {};
		\node   (3) at (0, 1) {};
		\node   (4) at (0, 0) {};
		\node   (5) at (-2, 0) {};
		\node   (6) at (-1, -1) {};
		\node   (7) at (-1, -0.25) {};
		\node   (8) at (-1, 0.5) {$d''$};
		\node   (9) at (-1, 2.5) {$a$};
		\node   (10) at (-1, -1.5) {$a$};
	\end{pgfonlayer}
	\begin{pgfonlayer}{edgelayer}
		\draw (0.center) to (1.center);
		\draw (2.center) to (5.center);
		\draw (3.center) to (4.center);
		\draw (4.center) to (5.center);
		\draw (2.center) to (3.center);
		\draw (7.center) to (6.center);
	\end{pgfonlayer}
\end{tikzpicture}
\end{align}

\noindent where $d''$ has $k-1$ splits, i.e. $s_{d''}<k$.  Therefore, using the induction hypothesis

\begin{equation}
\begin{tikzpicture}[scale=.75, baseline=10]
	\begin{pgfonlayer}{nodelayer}
		\node   (0) at (-1, 2) {};
		\node   (1) at (-1, 1.25) {};
		\node   (2) at (-2, 1) {};
		\node   (3) at (0, 1) {};
		\node   (4) at (0, 0) {};
		\node   (5) at (-2, 0) {};
		\node   (6) at (-1, -1) {};
		\node   (7) at (-1, -0.25) {};
		\node   (8) at (-1, 0.5) {$d$};
		\node   (9) at (-1, 2.5) {$a$};
		\node   (10) at (-1, -1.5) {$a$};
	\end{pgfonlayer}
	\begin{pgfonlayer}{edgelayer}
		\draw (0.center) to (1.center);
		\draw (2.center) to (5.center);
		\draw (3.center) to (4.center);
		\draw (4.center) to (5.center);
		\draw (2.center) to (3.center);
		\draw (7.center) to (6.center);
	\end{pgfonlayer}
\end{tikzpicture} \hspace{10mm} = \hspace{10mm} \begin{tikzpicture}[scale=.75, baseline=0]
	\begin{pgfonlayer}{nodelayer}
		\node  (0) at (0, 1) {};
		\node  (1) at (0, -1) {};
		\node  (2) at (0,-1.5) {$a$};
		\node  (3) at (0,1.5) {$a$};
	\end{pgfonlayer}
	\begin{pgfonlayer}{edgelayer}
		\draw (0.center) to (1.center);
	\end{pgfonlayer}
\end{tikzpicture}
\end{equation}
\end{proof}

The above lemmas will be helpful in proving Theorem \ref{thm:CnirrFaithful}.  Now let us explore a particular category from representation theory.

We denote $\mathbf{C}_n\textbf{-mod}_{\text{irr}}$ as the full $\CC$-linear monoidal subcategory of $\mathbf{C}_n\textbf{-mod}$ where the generating objects are the irreducible $\mathbf{C}_n$-modules $\mathbf{C}_n^{(a)}$ where $a\in\ZZ/n\ZZ$.  As all irreducible $\mathbf{C}_n$-modules are $1$-dimensional, and $$\operatorname{dim}(M\otimes N)=\operatorname{dim}(M)\cdot\operatorname{dim}(N),$$ then by Schur's Lemma, Hom$_{\mathbf{C}_n}(M,N)$ is either $0$-dimensional or $1$-dimensional.  If it is $1$-dimensional, then $M\cong \mathbf{C}_n^{(a)} \cong N$ for some $a\in\ZZ/n\ZZ$.  We pick bases for these irreducible $\mathbf{C}_n$-modules and let $v_a$ denote our chosen basis vector of $\mathbf{C}_n^{(a)}$.

We can choose $\mathbf{C}_n$-module homomorphisms, $$m^c_{a,b}: \mathbf{C}_n^{(a)}\otimes \mathbf{C}_n^{(b)}\longrightarrow \mathbf{C}_n^{(c)}$$ $$v_a\otimes v_b\mapsto v_c$$ where $c\equiv a+b \mod n$, $$s_c^{a,b}: \mathbf{C}_n^{(c)}\longrightarrow \mathbf{C}_n^{(a)}\otimes \mathbf{C}_n^{(b)}$$ $$v_c \mapsto v_a\otimes v_b$$ where $c\equiv a+b \mod n$, and $$id_a: \mathbf{C}_n^{(a)}\longrightarrow \mathbf{C}_n^{(a)}$$ is the identity map.  Notice that $m_{a,b}^c\circ s_c^{a,b}=id_c$ for all $a,b,c\in\ZZ/n\ZZ$. 

\begin{theorem}
There exists a well-defined functor of monoidal $\CC$-linear categories $\mathcal{F}_n^{\text{irr}}:  \mathcal{C}_n^{\text{irr}} \longrightarrow \mathbf{C}_n\textbf{-mod}_{\text{irr}}$ determined by the following rules:  $$a\mapsto \mathbf{C}_n^{(a)}$$

\begin{center}
\begin{tikzpicture}[scale=.75, baseline=18]
	\begin{pgfonlayer}{nodelayer}
		\node   (0) at (-1, 0) {};
		\node   (1) at (0, 0) {};
		\node   (2) at (-0.5, 1) {};
		\node   (3) at (-0.5, 2) {};
		\node   (4) at (-1, -0.5) {$a$};
		\node   (5) at (0, -0.5) {$b$};
		\node   (6) at (-0.5, 2.5) {$a+b$};
	\end{pgfonlayer}
	\begin{pgfonlayer}{edgelayer}
		\draw [bend left=90, looseness=3.50] (0.center) to (1.center);
		\draw (2.center) to (3.center);
	\end{pgfonlayer}
\end{tikzpicture}$\mapsto m_{a,b}$, 
\begin{tikzpicture}[scale=.75, baseline=18]
	\begin{pgfonlayer}{nodelayer}
		\node   (2) at (-0.5, 0) {};
		\node   (3) at (-0.5, 2) {};
		\node   (5) at (-0.5, -0.5) {$a$};
		\node   (6) at (-0.5, 2.5) {$a$};
	\end{pgfonlayer}
	\begin{pgfonlayer}{edgelayer}
		\draw (2.center) to (3.center);
	\end{pgfonlayer}
\end{tikzpicture}$\mapsto id_a$, 
\begin{tikzpicture}[scale=.75, baseline=18]
	\begin{pgfonlayer}{nodelayer}
		\node   (2) at (-0.5, 0) {};
		\node   (3) at (-0.5, 1) {};
		\node   (4) at (-1, 2) {};
		\node   (5) at (0, 2) {};
		\node   (6) at (-1, 2.5) {$a$};
		\node   (7) at (0, 2.5) {$b$};
		\node   (8) at (-0.5, -0.5) {$c$};
	\end{pgfonlayer}
	\begin{pgfonlayer}{edgelayer}
		\draw (2.center) to (3.center);
		\draw [bend right=90, looseness=3.50] (4.center) to (5.center);
	\end{pgfonlayer}
\end{tikzpicture}$\mapsto s_{c}^{a,b}$

\end{center}

\noindent for each $a,b,c\in \ZZ/n\ZZ$.
\end{theorem}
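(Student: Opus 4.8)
The plan is to exploit that $\mathcal{C}_n^{\text{irr}}$ is, by its very definition, the $\CC$-linear strict monoidal category presented by the generating objects $a\in\ZZ/n\ZZ$, the three families of generating morphisms (merge, split, identity), and the relations \eqref{eqn:CnnRelation1}--\eqref{eqn:CnnRelation3}. By the universal property of such a presentation, specifying a $\CC$-linear monoidal functor out of $\mathcal{C}_n^{\text{irr}}$ amounts to: (i) choosing an object of $\mathbf{C}_n\textbf{-mod}_{\text{irr}}$ for each generating object; (ii) choosing a morphism for each generating morphism whose domain and codomain agree with the images of the bottom and top of the corresponding diagram; and (iii) checking that the image of each imposed relation is a genuine identity of $\mathbf{C}_n$-module maps. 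For (i) and (ii) we take the assignments in the statement, extended monoidally by $[a_1,\dots,a_k]\mapsto \mathbf{C}_n^{(a_1)}\otimes\cdots\otimes\mathbf{C}_n^{(a_k)}$; one reads off from the definitions of $m_{a,b}^c$, $s_c^{a,b}$ and $id_a$ that the domains and codomains match, the compatibility $c\equiv a+b\bmod n$ being exactly what makes this consistent. So the whole content of the theorem is step (iii), and it extends uniquely and $\CC$-linearly once (iii) holds.

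For step (iii), since every Hom space involved is spanned by tensors of the chosen basis vectors $v_a$, each relation becomes a one-line check on a single basis vector, using only: $m_{a,b}^c(v_a\otimes v_b)=v_c$ and $s_c^{a,b}(v_c)=v_a\otimes v_b$ whenever $c\equiv a+b\bmod n$; the consequence $m_{a,b}^c\circ s_c^{a,b}=id_c$; and functoriality of $\otimes$, e.g. $(m_{a,b}^c\otimes id_d)(v_a\otimes v_b\otimes v_d)=v_c\otimes v_d$. Explicitly: in both relations of \eqref{eqn:CnnRelation1} each side, evaluated on $v_{a'}\otimes v_{b'}$, factors through $v_{a'+b'}$ and returns $v_a\otimes v_b$ — this is the only place the hypothesis $a+b\equiv a'+b'\bmod n$ is used; in \eqref{eqn:CnnRelation2} the first two relations assert that a split-then-merge composite equals an identity strand, which on $v_a$ is the tautology $v_a\mapsto v_a$, while the third asserts that the composite $m_{0,0}^0\circ s_0^{0,0}$ equals the empty diagram, i.e. $id_{\mathbf{C}_n^{(0)}}$, which holds since $\mathbf{C}_n^{(0)}$ is the monoidal unit and $m_{0,0}^0\circ s_0^{0,0}=id_0$; and the two relations of \eqref{eqn:CnnRelation3} are the associativity of the merges and coassociativity of the splits, each verified by sending $v_a\otimes v_b\otimes v_c\mapsto v_{a+b+c}$ (respectively $v_{a+b+c}\mapsto v_a\otimes v_b\otimes v_c$) via two applications of the defining formulas. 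Since the relations form finitely many families indexed by the labels and each check is of this shape, (iii) is complete.

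There is no serious obstacle here; the only subtleties are bookkeeping. First, $\mathbf{C}_n\textbf{-mod}$ is not a strict monoidal category whereas $\mathcal{C}_n^{\text{irr}}$ is, so the functor really lands in a strictification; by Mac Lane's Strictness Theorem (cited above) we suppress the associativity and unit isomorphisms, after which the computations in the previous paragraph are literal identities of maps. Second, as flagged in the remark following the definition, the split morphism $s_c^{a,b}$ depends on the ordered pair $(a,b)$ and not merely on $c$, so when comparing the two sides of \eqref{eqn:CnnRelation1} one must keep track of which ordered factorization $\mathbf{C}_n^{(a)}\otimes\mathbf{C}_n^{(b)}$ is the target; with that care taken, the verification goes through unchanged, and the assignment determines a well-defined $\CC$-linear monoidal functor $\mathcal{F}_n^{\text{irr}}$.
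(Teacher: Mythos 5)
Your proposal is correct and follows essentially the same route as the paper: extend the assignment monoidally and $\CC$-linearly via the universal property of the presentation, then verify each of the relations \eqref{eqn:CnnRelation1}--\eqref{eqn:CnnRelation3} by evaluating both sides on the chosen basis vectors $v_a$, exactly as the paper does. Your added remarks on strictification and on tracking the ordered target $(a,b)$ of the split are consistent with the paper's framing and do not change the argument.
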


\begin{proof}
We check that the above relations are preserved by the functor $\mathcal{F}_n^{\text{irr}}$.

As \begin{align*}
\mathcal{F}_n^{\text{irr}}\left(\begin{tikzpicture}[scale=.45, baseline=12]
	\begin{pgfonlayer}{nodelayer}
		\node   (0) at (-3, -1) {};
		\node   (1) at (-3, 1) {};
		\node   (2) at (-2, 1) {};
		\node   (3) at (-1, 1) {};
		\node   (4) at (-1, 3) {};
		\node   (5) at (-1.5, 0) {};
		\node   (6) at (-1.5, -1) {};
		\node   (7) at (-2.5, 2) {};
		\node   (8) at (-2.5, 3) {};
		\node   (15) at (-2.5, 3.5) {$a$};
		\node   (16) at (-1, 3.5) {$b$};
		\node   (17) at (-3, -1.5) {$a'$};
		\node   (18) at (-1.5, -1.5) {$b'$};
	\end{pgfonlayer}
	\begin{pgfonlayer}{edgelayer}
		\draw (0.center) to (1.center);
		\draw [bend left=90, looseness=3.50] (1.center) to (2.center);
		\draw [bend right=90, looseness=3.50] (2.center) to (3.center);
		\draw (5.center) to (6.center);
		\draw (7.center) to (8.center);
		\draw (4.center) to (3.center);
	\end{pgfonlayer}
\end{tikzpicture}\right)(v_{a'}\otimes v_{b'})&=(m_{a',c}^a\otimes id_b \circ id_{a'} \otimes s_{b'}^{c,b}) (v_{a'}\otimes v_{b'})\\
&=(m_{a',c}^a\otimes id_b) (v_{a'} \otimes v_{c} \otimes v_{b})=v_a \otimes v_b;\\
\mathcal{F}_n^{\text{irr}}\left(\begin{tikzpicture}[scale=.45, baseline=5]
	\begin{pgfonlayer}{nodelayer}
		\node   (9) at (-2, -1) {};
		\node   (10) at (-1, -1) {};
		\node   (11) at (-1.5, 0) {};
		\node   (12) at (-1.5, 1) {};
		\node   (13) at (-1, 2) {};
		\node   (14) at (-2, 2) {};
		\node   (19) at (-2, -1.5) {$a'$};
		\node   (20) at (-1, -1.5) {$b'$};
		\node   (21) at (-2, 2.5) {$a$};
		\node   (22) at (-1, 2.5) {$b$};
	\end{pgfonlayer}
	\begin{pgfonlayer}{edgelayer}
		\draw [bend right=90, looseness=3.50] (14.center) to (13.center);
		\draw (12.center) to (11.center);
		\draw [bend left=90, looseness=3.50] (9.center) to (10.center);
	\end{pgfonlayer}
\end{tikzpicture}\right)(v_{a'}\otimes v_{b'})&=s_{c}^{a,b}\circ m_{a',b'}^{c}(v_{a'}\otimes v_{b'})=s_{c}^{a,b}(v_c)=v_a\otimes v_b;\\
\mathcal{F}_n^{\text{irr}}\left(\begin{tikzpicture}[scale=.45, baseline=12]
	\begin{pgfonlayer}{nodelayer}
		\node   (0) at (-1, -1) {};
		\node   (1) at (-1, 1) {};
		\node   (2) at (-2, 1) {};
		\node   (3) at (-3, 1) {};
		\node   (4) at (-3, 3) {};
		\node   (5) at (-2.5, 0) {};
		\node   (6) at (-2.5, -1) {};
		\node   (7) at (-1.5, 2) {};
		\node   (8) at (-1.5, 3) {};
		\node   (15) at (-3, 3.5) {$a$};
		\node   (16) at (-1.5, 3.5) {$b$};
		\node   (17) at (-1, -1.5) {$b'$};
		\node   (18) at (-2.5, -1.5) {$a'$};
	\end{pgfonlayer}
	\begin{pgfonlayer}{edgelayer}
		\draw (0.center) to (1.center);
		\draw [bend right=90, looseness=3.50] (1.center) to (2.center);
		\draw [bend left=90, looseness=3.50] (2.center) to (3.center);
		\draw (5.center) to (6.center);
		\draw (7.center) to (8.center);
		\draw (4.center) to (3.center);
	\end{pgfonlayer}
\end{tikzpicture}\right)(v_{a'}\otimes v_{b'})&=(id_a\otimes m_{c,b'}^a \circ s_{a'}^{a,c}\otimes id_{b'}) (v_{a'}\otimes v_{b'})\\
&=(id_a\otimes m_{c,b'}^a) (v_{a} \otimes v_{c} \otimes v_{b'})=v_a \otimes v_b.
\end{align*}

Thus, \begin{align}
\mathcal{F}_n^{\text{irr}}\left(\begin{tikzpicture}[scale=.45, baseline=12]
	\begin{pgfonlayer}{nodelayer}
		\node   (0) at (-3, -1) {};
		\node   (1) at (-3, 1) {};
		\node   (2) at (-2, 1) {};
		\node   (3) at (-1, 1) {};
		\node   (4) at (-1, 3) {};
		\node   (5) at (-1.5, 0) {};
		\node   (6) at (-1.5, -1) {};
		\node   (7) at (-2.5, 2) {};
		\node   (8) at (-2.5, 3) {};
		\node   (15) at (-2.5, 3.5) {$a$};
		\node   (16) at (-1, 3.5) {$b$};
		\node   (17) at (-3, -1.5) {$a'$};
		\node   (18) at (-1.5, -1.5) {$b'$};
	\end{pgfonlayer}
	\begin{pgfonlayer}{edgelayer}
		\draw (0.center) to (1.center);
		\draw [bend left=90, looseness=3.50] (1.center) to (2.center);
		\draw [bend right=90, looseness=3.50] (2.center) to (3.center);
		\draw (5.center) to (6.center);
		\draw (7.center) to (8.center);
		\draw (4.center) to (3.center);
	\end{pgfonlayer}
\end{tikzpicture}\right)=\mathcal{F}_n^{\text{irr}}\left(\begin{tikzpicture}[scale=.45, baseline=10]
	\begin{pgfonlayer}{nodelayer}
		\node   (9) at (-2, -1) {};
		\node   (10) at (-1, -1) {};
		\node   (11) at (-1.5, 0) {};
		\node   (12) at (-1.5, 1) {};
		\node   (13) at (-1, 2) {};
		\node   (14) at (-2, 2) {};
		\node   (19) at (-2, -1.5) {$a'$};
		\node   (20) at (-1, -1.5) {$b'$};
		\node   (21) at (-2, 2.5) {$a$};
		\node   (22) at (-1, 2.5) {$b$};
	\end{pgfonlayer}
	\begin{pgfonlayer}{edgelayer}
		\draw [bend right=90, looseness=3.50] (14.center) to (13.center);
		\draw (12.center) to (11.center);
		\draw [bend left=90, looseness=3.50] (9.center) to (10.center);
	\end{pgfonlayer}
\end{tikzpicture}\right)&=\mathcal{F}_n^{\text{irr}}\left(\begin{tikzpicture}[scale=.45, baseline=12]
	\begin{pgfonlayer}{nodelayer}
		\node   (0) at (-1, -1) {};
		\node   (1) at (-1, 1) {};
		\node   (2) at (-2, 1) {};
		\node   (3) at (-3, 1) {};
		\node   (4) at (-3, 3) {};
		\node   (5) at (-2.5, 0) {};
		\node   (6) at (-2.5, -1) {};
		\node   (7) at (-1.5, 2) {};
		\node   (8) at (-1.5, 3) {};
		\node   (15) at (-3, 3.5) {$a$};
		\node   (16) at (-1.5, 3.5) {$b$};
		\node   (17) at (-1, -1.5) {$b'$};
		\node   (18) at (-2.5, -1.5) {$a'$};
	\end{pgfonlayer}
	\begin{pgfonlayer}{edgelayer}
		\draw (0.center) to (1.center);
		\draw [bend right=90, looseness=3.50] (1.center) to (2.center);
		\draw [bend left=90, looseness=3.50] (2.center) to (3.center);
		\draw (5.center) to (6.center);
		\draw (7.center) to (8.center);
		\draw (4.center) to (3.center);
	\end{pgfonlayer}
\end{tikzpicture}\right).
\end{align}

As

\begin{align*}
\mathcal{F}_n^{\text{irr}}\left(\begin{tikzpicture}[scale=.45, baseline=10]
	\begin{pgfonlayer}{nodelayer}
		\node   (9) at (-2, -1) {};
		\node   (10) at (-1, -1) {};
		\node   (11) at (-1.5, 0) {};
		\node   (12) at (-1.5, 1) {};
		\node   (13) at (-1, 2) {};
		\node   (14) at (-2, 2) {};
		\node   (19) at (-2, -1.5) {$a$};
		\node   (20) at (-1, -1.5) {$b$};
		\node   (21) at (-2, 2.5) {$a$};
		\node   (22) at (-1, 2.5) {$b$};
	\end{pgfonlayer}
	\begin{pgfonlayer}{edgelayer}
		\draw [bend right=90, looseness=3.50] (14.center) to (13.center);
		\draw (12.center) to (11.center);
		\draw [bend left=90, looseness=3.50] (9.center) to (10.center);
	\end{pgfonlayer}
\end{tikzpicture}\right)(v_{a}\otimes v_{b})&=s_{c}^{a,b}\circ m_{a,b}^{c}(v_{a}\otimes v_{b})=s_{c}^{a,b}(v_c)=v_a\otimes v_b;\\
\mathcal{F}_n^{\text{irr}}\left(\begin{tikzpicture}[scale=.5, baseline=10]
	\begin{pgfonlayer}{nodelayer}
		\node   (24) at (1, 2) {};
		\node   (25) at (2, 2) {};
		\node   (26) at (1, -1) {};
		\node   (27) at (2, -1) {};
		\node   (28) at (1, 2.5) {$a$};
		\node   (29) at (2, 2.5) {$b$};
		\node   (30) at (1, -1.5) {$a$};
		\node   (31) at (2, -1.5) {$b$};
	\end{pgfonlayer}
	\begin{pgfonlayer}{edgelayer}
		\draw (24.center) to (26.center);
		\draw (25.center) to (27.center);
	\end{pgfonlayer}
\end{tikzpicture}\right)(v_{a}\otimes v_{b})&=id_a \otimes id_b (v_{a}\otimes v_{b})=v_a\otimes v_b.
\end{align*}

\noindent Thus, 
\begin{align}
\mathcal{F}_n^{\text{irr}}\left(\begin{tikzpicture}[scale=.5, baseline=10]
	\begin{pgfonlayer}{nodelayer}
		\node   (9) at (-2, -1) {};
		\node   (10) at (-1, -1) {};
		\node   (11) at (-1.5, 0) {};
		\node   (12) at (-1.5, 1) {};
		\node   (13) at (-1, 2) {};
		\node   (14) at (-2, 2) {};
		\node   (19) at (-2, -1.5) {$a$};
		\node   (20) at (-1, -1.5) {$b$};
		\node   (21) at (-2, 2.5) {$a$};
		\node   (22) at (-1, 2.5) {$b$};
	\end{pgfonlayer}
	\begin{pgfonlayer}{edgelayer}
		\draw [bend right=90, looseness=3.50] (14.center) to (13.center);
		\draw (12.center) to (11.center);
		\draw [bend left=90, looseness=3.50] (9.center) to (10.center);
	\end{pgfonlayer}
\end{tikzpicture}\right)=\mathcal{F}_n^{\text{irr}}\left(\begin{tikzpicture}[scale=.5, baseline=10]
	\begin{pgfonlayer}{nodelayer}
		\node   (24) at (1, 2) {};
		\node   (25) at (2, 2) {};
		\node   (26) at (1, -1) {};
		\node   (27) at (2, -1) {};
		\node   (28) at (1, 2.5) {$a$};
		\node   (29) at (2, 2.5) {$b$};
		\node   (30) at (1, -1.5) {$a$};
		\node   (31) at (2, -1.5) {$b$};
	\end{pgfonlayer}
	\begin{pgfonlayer}{edgelayer}
		\draw (24.center) to (26.center);
		\draw (25.center) to (27.center);
	\end{pgfonlayer}
\end{tikzpicture}\right).
\end{align}

\begin{align*}
\mathcal{F}_n^{\text{irr}}\left(\begin{tikzpicture}[scale=.5, baseline=18]
	\begin{pgfonlayer}{nodelayer}
		\node   (0) at (-4, 0) {};
		\node   (1) at (-3, 0) {};
		\node   (2) at (-2, 0) {};
		\node   (3) at (-2.5, 1) {};
		\node   (4) at (-3.25, 2) {};
		\node   (5) at (-3.25, 3) {};
		\node   (12) at (-4, -0.5) {$a$};
		\node   (13) at (-3, -0.5) {$b$};
		\node   (14) at (-2, -0.5) {$c$};
		\node   (15) at (-3.25, 3.5) {$a+b+c$};
	\end{pgfonlayer}
	\begin{pgfonlayer}{edgelayer}
		\draw [bend left=90, looseness=3.50] (1.center) to (2.center);
		\draw [in=105, out=90, looseness=2.75] (0.center) to (3.center);
		\draw (5.center) to (4.center);
	\end{pgfonlayer}
\end{tikzpicture}\right)(v_{a}\otimes v_{b}\otimes v_{c})&=m_{a,b+c}^{a+b+c}\circ id_a\otimes m_{b,c}^{b+c}(v_{a}\otimes v_{b}\otimes v_{c})\\
&=m_{a,b+c}^{a+b+c}(v_a\otimes v_{b+c})=v_{a+b+c};\\
\mathcal{F}_n^{\text{irr}}\left(\begin{tikzpicture}[scale=.5, baseline=18]
	\begin{pgfonlayer}{nodelayer}
		\node   (6) at (2, 0) {};
		\node   (7) at (0, 0) {};
		\node   (8) at (1, 0) {};
		\node   (9) at (0.5, 1) {};
		\node   (10) at (1.25, 2) {};
		\node   (11) at (1.25, 3) {};
		\node   (16) at (0, -0.5) {$a$};
		\node   (17) at (1, -0.5) {$b$};
		\node   (18) at (2, -0.5) {$c$};
		\node   (19) at (1.25, 3.5) {$a+b+c$};
	\end{pgfonlayer}
	\begin{pgfonlayer}{edgelayer}
		\draw [bend left=90, looseness=3.50] (7.center) to (8.center);
		\draw [in=75, out=90, looseness=2.75] (6.center) to (9.center);
		\draw (11.center) to (10.center);
	\end{pgfonlayer}
\end{tikzpicture}\right)(v_{a}\otimes v_{b}\otimes v_{c})&=m_{a+b,c}^{a+b+c}\circ m_{a,b}^{a+b}\otimes id_c(v_{a}\otimes v_{b}\otimes v_{c})\\
&=m_{a+b,c}^{a+b+c}(v_{a+b}\otimes v_{c})=v_{a+b+c}.
\end{align*}

\noindent Thus, 
\begin{align}
    \mathcal{F}_n^{\text{irr}}\left(\begin{tikzpicture}[scale=.5, baseline=18]
	\begin{pgfonlayer}{nodelayer}
		\node   (0) at (-4, 0) {};
		\node   (1) at (-3, 0) {};
		\node   (2) at (-2, 0) {};
		\node   (3) at (-2.5, 1) {};
		\node   (4) at (-3.25, 2) {};
		\node   (5) at (-3.25, 3) {};
		\node   (12) at (-4, -0.5) {$a$};
		\node   (13) at (-3, -0.5) {$b$};
		\node   (14) at (-2, -0.5) {$c$};
		\node   (15) at (-3.25, 3.5) {$a+b+c$};
	\end{pgfonlayer}
	\begin{pgfonlayer}{edgelayer}
		\draw [bend left=90, looseness=3.50] (1.center) to (2.center);
		\draw [in=105, out=90, looseness=2.75] (0.center) to (3.center);
		\draw (5.center) to (4.center);
	\end{pgfonlayer}
\end{tikzpicture}\right)&=\mathcal{F}_n^{\text{irr}}\left(\begin{tikzpicture}[scale=.5, baseline=18]
	\begin{pgfonlayer}{nodelayer}
		\node   (6) at (2, 0) {};
		\node   (7) at (0, 0) {};
		\node   (8) at (1, 0) {};
		\node   (9) at (0.5, 1) {};
		\node   (10) at (1.25, 2) {};
		\node   (11) at (1.25, 3) {};
		\node   (16) at (0, -0.5) {$a$};
		\node   (17) at (1, -0.5) {$b$};
		\node   (18) at (2, -0.5) {$c$};
		\node   (19) at (1.25, 3.5) {$a+b+c$};
	\end{pgfonlayer}
	\begin{pgfonlayer}{edgelayer}
		\draw [bend left=90, looseness=3.50] (7.center) to (8.center);
		\draw [in=75, out=90, looseness=2.75] (6.center) to (9.center);
		\draw (11.center) to (10.center);
	\end{pgfonlayer}
\end{tikzpicture}\right).
\end{align}

As \begin{align*}
\mathcal{F}_n^{\text{irr}}\left(\begin{tikzpicture}[scale=.5, baseline=6]
	\begin{pgfonlayer}{nodelayer}
		\node   (0) at (-3, 2) {};
		\node   (1) at (-2, 2) {};
		\node   (2) at (-1, 2) {};
		\node   (3) at (-1.5, 1) {};
		\node   (4) at (-2.25, 0) {};
		\node   (5) at (-2.25, -1) {};
		\node   (12) at (-3, 2.5) {$a$};
		\node   (13) at (-2, 2.5) {$b$};
		\node   (14) at (-1, 2.5) {$c$};
		\node   (15) at (-2.25, -1.5) {$a+b+c$};
	\end{pgfonlayer}
	\begin{pgfonlayer}{edgelayer}
		\draw [bend right=90, looseness=3.50] (1.center) to (2.center);
		\draw [in=-105, out=-90, looseness=2.75] (0.center) to (3.center);
		\draw (5.center) to (4.center);
	\end{pgfonlayer}
\end{tikzpicture}\right)(v_{a+b+c})&= id_a\otimes s_{b+c}^{b,c}\circ s_{a+b+c}^{a,b+c}(v_{a+b+c})\\
&=id_a\otimes s_{b+c}^{b,c}(v_a\otimes v_{b+c})=v_{a} \otimes v_b \otimes v_{c};\\
\mathcal{F}_n^{\text{irr}}\left(\begin{tikzpicture}[scale=.5, baseline=6]
	\begin{pgfonlayer}{nodelayer}
		\node   (6) at (3, 2) {};
		\node   (7) at (1, 2) {};
		\node   (8) at (2, 2) {};
		\node   (9) at (1.5, 1) {};
		\node   (10) at (2.25, 0) {};
		\node   (11) at (2.25, -1) {};
		\node   (16) at (1, 2.5) {$a$};
		\node   (17) at (2, 2.5) {$b$};
		\node   (18) at (3, 2.5) {$c$};
		\node   (19) at (2.25, -1.5) {$a+b+c$};
	\end{pgfonlayer}
	\begin{pgfonlayer}{edgelayer}
		\draw [bend right=90, looseness=3.50] (7.center) to (8.center);
		\draw [in=-75, out=-90, looseness=2.75] (6.center) to (9.center);
		\draw (11.center) to (10.center);
	\end{pgfonlayer}
\end{tikzpicture}\right)(v_{a+b+c})&=s_{a+b}^{a,b}\otimes id_c\circ s_{a+b+c}^{a+b,c}(v_{a+b+c})\\
&=s_{a+b}^{a,b}\otimes id_c(v_{a+b}\otimes v_{c})=v_{a}\otimes v_{b}\otimes v_{c}.
\end{align*}

\noindent Thus, 
\begin{align}
\mathcal{F}_n^{\text{irr}}\left(\begin{tikzpicture}[scale=.5, baseline=6]
	\begin{pgfonlayer}{nodelayer}
		\node   (0) at (-3, 2) {};
		\node   (1) at (-2, 2) {};
		\node   (2) at (-1, 2) {};
		\node   (3) at (-1.5, 1) {};
		\node   (4) at (-2.25, 0) {};
		\node   (5) at (-2.25, -1) {};
		\node   (12) at (-3, 2.5) {$a$};
		\node   (13) at (-2, 2.5) {$b$};
		\node   (14) at (-1, 2.5) {$c$};
		\node   (15) at (-2.25, -1.5) {$a+b+c$};
	\end{pgfonlayer}
	\begin{pgfonlayer}{edgelayer}
		\draw [bend right=90, looseness=3.50] (1.center) to (2.center);
		\draw [in=-105, out=-90, looseness=2.75] (0.center) to (3.center);
		\draw (5.center) to (4.center);
	\end{pgfonlayer}
\end{tikzpicture}\right)&=\mathcal{F}_n^{\text{irr}}\left(\begin{tikzpicture}[scale=.5, baseline=6]
	\begin{pgfonlayer}{nodelayer}
		\node   (6) at (3, 2) {};
		\node   (7) at (1, 2) {};
		\node   (8) at (2, 2) {};
		\node   (9) at (1.5, 1) {};
		\node   (10) at (2.25, 0) {};
		\node   (11) at (2.25, -1) {};
		\node   (16) at (1, 2.5) {$a$};
		\node   (17) at (2, 2.5) {$b$};
		\node   (18) at (3, 2.5) {$c$};
		\node   (19) at (2.25, -1.5) {$a+b+c$};
	\end{pgfonlayer}
	\begin{pgfonlayer}{edgelayer}
		\draw [bend right=90, looseness=3.50] (7.center) to (8.center);
		\draw [in=-75, out=-90, looseness=2.75] (6.center) to (9.center);
		\draw (11.center) to (10.center);
	\end{pgfonlayer}
\end{tikzpicture}\right).
\end{align}

As \begin{align*}
\mathcal{F}_n^{\text{irr}}\left(\begin{tikzpicture}[scale=.25, baseline=3]
	\begin{pgfonlayer}{nodelayer}
		\node   (33) at (-1.5, 2.5) {};
		\node   (34) at (-1.5, 1.5) {};
		\node   (35) at (-2, 0.5) {};
		\node   (36) at (-1, 0.5) {};
		\node   (37) at (-1.5, -0.5) {};
		\node   (38) at (-1.5, -1.5) {};
		\node   (39) at (-1.5, -2) {$a$};
		\node   (40) at (-1.5, 3) {$a$};
	\end{pgfonlayer}
	\begin{pgfonlayer}{edgelayer}
		\draw (33.center) to (34.center);
		\draw [bend left=90, looseness=3.50] (35.center) to (36.center);
		\draw [bend left=90, looseness=3.50] (36.center) to (35.center);
		\draw (38.center) to (37.center);
	\end{pgfonlayer}
\end{tikzpicture}\right)(v_{a})&=  m_{b,c}^{a}\circ s_{a}^{b,c}(v_{a})=m_{b,c}^{a}(v_b \otimes v_c)=v_a\\
\mathcal{F}_n^{\text{irr}}\left(\begin{tikzpicture}[scale=.25, baseline=3]
	\begin{pgfonlayer}{nodelayer}
		\node   (25) at (1, 2.5) {};
		\node   (27) at (1, -1.5) {};
		\node   (29) at (1, 3) {$a$};
		\node   (31) at (1, -2) {$a$};
	\end{pgfonlayer}
	\begin{pgfonlayer}{edgelayer}
		\draw (25.center) to (27.center);
	\end{pgfonlayer}
\end{tikzpicture}\right)(v_a)&= id_a(v_{a})=v_a. 
\end{align*}

Thus, 
\begin{equation}
\mathcal{F}_n^{\text{irr}}\left(\begin{tikzpicture}[scale=.25, baseline=3]
	\begin{pgfonlayer}{nodelayer}
		\node   (33) at (-1.5, 2.5) {};
		\node   (34) at (-1.5, 1.5) {};
		\node   (35) at (-2, 0.5) {};
		\node   (36) at (-1, 0.5) {};
		\node   (37) at (-1.5, -0.5) {};
		\node   (38) at (-1.5, -1.5) {};
		\node   (39) at (-1.5, -2) {$a$};
		\node   (40) at (-1.5, 3) {$a$};
	\end{pgfonlayer}
	\begin{pgfonlayer}{edgelayer}
		\draw (33.center) to (34.center);
		\draw [bend left=90, looseness=3.50] (35.center) to (36.center);
		\draw [bend left=90, looseness=3.50] (36.center) to (35.center);
		\draw (38.center) to (37.center);
	\end{pgfonlayer}
\end{tikzpicture}\right)=\mathcal{F}_n^{\text{irr}}\left(\begin{tikzpicture}[scale=.25, baseline=3]
	\begin{pgfonlayer}{nodelayer}
		\node   (25) at (1, 2.5) {};
		\node   (27) at (1, -1.5) {};
		\node   (29) at (1, 3) {$a$};
		\node   (31) at (1, -2) {$a$};
	\end{pgfonlayer}
	\begin{pgfonlayer}{edgelayer}
		\draw (25.center) to (27.center);
	\end{pgfonlayer}
\end{tikzpicture}\right).
\end{equation}

Therefore, the functor $\mathcal{F}_n^{\text{irr}}$ is well defined.

\end{proof}

\begin{theorem}
The functors $\mathcal{F}_n^{\text{irr}}:  \mathcal{C}_n^{\text{irr}} \longrightarrow \mathbf{C}_n\textbf{-mod}_{\text{irr}}$ are full.
\end{theorem}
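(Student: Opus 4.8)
The plan is to fix objects $A=[a_1,\dots,a_k]$ and $B=[b_1,\dots,b_\ell]$ of $\mathcal{C}_n^{\text{irr}}$ and to show that the map
\[
\mathcal{F}_n^{\text{irr}}\colon\operatorname{Hom}_{\mathcal{C}_n^{\text{irr}}}(A,B)\longrightarrow\operatorname{Hom}_{\mathbf{C}_n}\!\left(\mathcal{F}_n^{\text{irr}}(A),\mathcal{F}_n^{\text{irr}}(B)\right)
\]
is surjective. First I would record that $\mathcal{F}_n^{\text{irr}}(A)\cong\mathbf{C}_n^{(\alpha)}$ and $\mathcal{F}_n^{\text{irr}}(B)\cong\mathbf{C}_n^{(\beta)}$, where $\alpha\equiv a_1+\dots+a_k$ and $\beta\equiv b_1+\dots+b_\ell\pmod n$. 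Since every irreducible $\mathbf{C}_n$-module is one-dimensional, Schur's Lemma makes the target Hom space $0$ when $\alpha\neq\beta$ (so there is nothing to prove) and one-dimensional when $\alpha=\beta$. The statement therefore reduces to the case $\alpha=\beta$, where it is enough to exhibit a single diagram $d\in\operatorname{Hom}_{\mathcal{C}_n^{\text{irr}}}(A,B)$ with $\mathcal{F}_n^{\text{irr}}(d)\neq 0$.

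The diagram I would take is "merge everything down to one strand, then split it back up". For $k\geq 1$ let $d_{\mathrm m}\colon A\to[\alpha]$ be any composite of $k-1$ merge diagrams (each tensored with the relevant identity strands) fusing $[a_1,\dots,a_k]$ into a single strand labeled $\alpha$, and for $\ell\geq 1$ let $d_{\mathrm s}\colon[\beta]\to B$ be any composite of $\ell-1$ split diagrams expanding a single strand labeled $\beta$ into $[b_1,\dots,b_\ell]$; in the degenerate cases $k=0$ or $\ell=0$ one writes the unit object as the length-one sequence $[0]$ and takes the corresponding half of $d$ to be an identity strand. Since $\alpha=\beta$, all labels are compatible, so $d:=d_{\mathrm s}\circ d_{\mathrm m}\colon A\to B$ is a genuine morphism of $\mathcal{C}_n^{\text{irr}}$ (and $s_d-m_d=\ell-k$, in accordance with Lemma \ref{lem:TensorDiff}). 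Applying the defining rules of $\mathcal{F}_n^{\text{irr}}$ together with $m_{a,b}^{c}(v_a\otimes v_b)=v_c$ and $s_c^{a,b}(v_c)=v_a\otimes v_b$, one checks by a direct computation that $\mathcal{F}_n^{\text{irr}}(d)$ carries $v_{a_1}\otimes\cdots\otimes v_{a_k}$ to $v_{b_1}\otimes\cdots\otimes v_{b_\ell}$; in particular $\mathcal{F}_n^{\text{irr}}(d)\neq 0$, so it spans the one-dimensional target and surjectivity follows.

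I do not anticipate a genuine obstacle: the only points needing care are the bookkeeping in the degenerate cases where $A$ or $B$ is the unit object, and the remark that $\mathcal{F}_n^{\text{irr}}(d)$ is automatically $\mathbf{C}_n$-linear because $\mathcal{F}_n^{\text{irr}}$ has already been shown to be a well-defined functor. For a slightly cleaner write-up I would instead observe that, under the fixed isomorphisms $\mathcal{F}_n^{\text{irr}}(A)\cong\mathbf{C}_n^{(\alpha)}$ and $\mathcal{F}_n^{\text{irr}}(B)\cong\mathbf{C}_n^{(\beta)}$, the morphism $\mathcal{F}_n^{\text{irr}}(d)$ becomes a nonzero scalar map $\mathbf{C}_n^{(\alpha)}\to\mathbf{C}_n^{(\beta)}$, which is visibly nonzero.
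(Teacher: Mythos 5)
Your proposal is correct, and it rests on the same key reduction as the paper: since all irreducible $\mathbf{C}_n$-modules are one-dimensional, every target Hom space is either zero or one-dimensional, so fullness amounts to exhibiting a single diagram with nonzero image whenever the weights agree modulo $n$. Where you differ is in the diagram you exhibit and in the generality you treat explicitly. The paper only writes down a diagram from $[k_1,\dots,k_l]$ into the all-ones object $[1,1,\dots,1]$, built by repeatedly splitting off strands labeled $1$, and leaves the reduction from an arbitrary target $[b_1,\dots,b_\ell]$ implicit in its opening ``it suffices to show'' sentence. You instead take the ``merge everything down to a single strand labeled $\alpha$, then split back up to $[b_1,\dots,b_\ell]$'' diagram, which handles an arbitrary source and target in one stroke and requires no such reduction; it is also exactly the canonical form that the paper later uses in the faithfulness argument (Theorem \ref{thm:CnirrFaithful}), so your write-up makes the two proofs pleasantly parallel. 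Your verification that the image sends $v_{a_1}\otimes\cdots\otimes v_{a_k}$ to $v_{b_1}\otimes\cdots\otimes v_{b_\ell}$ is a routine application of the defining formulas for $m_{a,b}^c$ and $s_c^{a,b}$, and your handling of the degenerate cases via the unit object $[0]$ is consistent with the paper's convention. The only cosmetic caveat is that since the generating split morphism must specify its two target labels, you should make explicit a choice (e.g.\ peeling off $b_1$, then $b_2$, etc.) when defining $d_{\mathrm{s}}$; this costs nothing.
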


\begin{proof}
It suffices to show that given $[k_1, k_2, \dots, k_l] \in (\ZZ/n\ZZ)^l$ and a morphism $f\in \mathbf{C}_n\textbf{-mod}_{\text{irr}}$ where $f:\bigotimes\limits_{i=1}^{l}\mathbf{C}_n^{(k_i)}\longrightarrow \bigotimes\limits_{j=1}^{k_1+k_2+\cdots+k_l} \mathbf{C}_n^{(1)}$, there exists a morphism $d\in\mathcal{C}_n^{\text{irr}}$ such that $\mathcal{F}_n^{\text{irr}}(d)=f$.  As $\bigotimes\limits_{i=1}^{l}\mathbf{C}_n^{(k_i)}$ and $\bigotimes\limits_{j=1}^{k_1+k_2+\cdots+k_l} \mathbf{C}_n^{(1)}$ are one dimensional $\mathbf{C}_n$-modules, they are irreducible as $\mathbf{C}_n$-modules, and thus, up to scaling, there is only one non-zero morphism.  Said another way, we need only show there exists a $d\in\mathcal{C}_n^{\text{irr}}$ such that $\mathcal{F}_n^{\text{irr}}(d):\bigotimes\limits_{i=1}^{l}\mathbf{C}_n^{(k_i)}\longrightarrow \bigotimes\limits_{j=1}^{k_1+k_2+\cdots+k_l} \mathbf{C}_n^{(1)}$.

We construct a diagram from $[k_1, k_2, \dots, k_l]$ to $[1, 1, \dots, 1]$ where there are $k_1+k_2+\cdots+k_l=1+1+\cdots+1$.  Consider the following diagram:

\begin{center}
\begin{tikzpicture}[scale=.5]
	\begin{pgfonlayer}{nodelayer}
		\node   (0) at (-4.5, -0.5) {$k_1$};
		\node   (1) at (-2.5, -0.5) {$k_2$};
		\node   (2) at (-0.75, -0.5) {$\cdots$};
		\node   (3) at (1, -0.5) {$k_l$};
		\node   (4) at (-4.5, 0) {};
		\node   (5) at (-2.5, 0) {};
		\node   (6) at (1, 0) {};
		\node   (7) at (-4.5, 1) {};
		\node   (8) at (-5, 2) {};
		\node   (9) at (-4, 2) {};
		\node   (10) at (-2.5, 1) {};
		\node   (11) at (-3, 2) {};
		\node   (12) at (-2, 2) {};
		\node   (13) at (1, 1) {};
		\node   (14) at (0.5, 2) {};
		\node   (15) at (1.5, 2) {};
		\node   (16) at (-0.75, 1) {$\cdots$};
		\node   (17) at (-5, 2.5) {$\mathsmaller{\mathsmaller{k_1-1}}$};
		\node   (18) at (-4, 2.5) {$\mathsmaller{\mathsmaller{1}}$};
		\node   (19) at (-3, 2.5) {$\mathsmaller{\mathsmaller{k_2-1}}$};
		\node   (20) at (-2, 2.5) {$\mathsmaller{\mathsmaller{1}}$};
		\node   (21) at (0.5, 2.5) {$\mathsmaller{\mathsmaller{k_l-1}}$};
		\node   (22) at (1.5, 2.5) {$\mathsmaller{\mathsmaller{1}}$};
		\node   (23) at (-0.75, 2.5) {$\cdots$};
	\end{pgfonlayer}
	\begin{pgfonlayer}{edgelayer}
		\draw (4.center) to (7.center);
		\draw (5.center) to (10.center);
		\draw (6.center) to (13.center);
		\draw [bend right=90, looseness=3.50] (8.center) to (9.center);
		\draw [bend right=90, looseness=3.50] (11.center) to (12.center);
		\draw [bend right=90, looseness=3.50] (14.center) to (15.center);
	\end{pgfonlayer}
\end{tikzpicture}
\end{center}

\noindent for $k_i\not\equiv 1 \mod n$.  If $k_i\equiv 1 \mod n$, we replace the split diagram with the identity strand.  We continue to stack the split diagram until $k_i-j \equiv 1\mod n$ and tensor with the identity strand where needed.  This process is finite, and thus, we get the resulting diagram:

\begin{center}
\begin{tikzpicture}[scale=.5]
	\begin{pgfonlayer}{nodelayer}
		\node   (0) at (-4.5, -0.5) {$k_1$};
		\node   (1) at (-2.5, -0.5) {$k_2$};
		\node   (2) at (-1, 0) {$\cdots$};
		\node   (3) at (1, -0.5) {$k_l$};
		\node   (4) at (-4.5, 0) {};
		\node   (5) at (-2.5, 0) {};
		\node   (6) at (1, 0) {};
		\node   (7) at (-4.5, 1) {};
		\node   (8) at (-5, 2) {};
		\node   (9) at (-4, 2) {};
		\node   (10) at (-2.5, 1) {};
		\node   (11) at (-3, 2) {};
		\node   (12) at (-2, 2) {};
		\node   (13) at (1, 1) {};
		\node   (14) at (0.5, 2) {};
		\node   (15) at (1.5, 2) {};
		\node   (16) at (-1, 3) {$\cdots$};
		\node   (17) at (-5, 3) {};
		\node   (18) at (-4, 4) {};
		\node   (19) at (-3, 3) {};
		\node   (20) at (-2, 4) {};
		\node   (21) at (0.5, 3) {};
		\node   (22) at (1.5, 4) {};
		\node   (23) at (-5.5, 4) {};
		\node   (24) at (-4.5, 4) {};
		\node   (25) at (-3.5, 4) {};
		\node   (26) at (-2.5, 4) {};
		\node   (27) at (0, 4) {};
		\node   (28) at (1, 4) {};
		\node   (29) at (-5, 5) {$\cdots$};
		\node   (30) at (-3, 5) {$\cdots$};
		\node   (31) at (-1, 5) {$\cdots$};
		\node   (32) at (1, 5) {$\cdots$};
		\node   (33) at (-6, 6) {};
		\node   (34) at (-5, 6) {};
		\node   (35) at (-4, 6) {};
		\node   (36) at (-3, 6) {};
		\node   (37) at (-2, 6) {};
		\node   (39) at (0, 6) {};
		\node   (40) at (1, 6) {};
		\node   (41) at (1.5, 6) {};
		\node   (42) at (-6, 6.5) {};
		\node   (43) at (-5, 6.5) {};
		\node   (44) at (-4, 6.5) {};
		\node   (45) at (-3, 6.5) {};
		\node   (46) at (-2, 6.5) {};
		\node   (47) at (-1, 6.5) {$\cdots$};
		\node   (48) at (0, 6.5) {};
		\node   (49) at (1, 6.5) {};
		\node   (50) at (1.5, 6.5) {};
		\node   (51) at (-6, 7) {$1$};
		\node   (52) at (-5, 7) {$1$};
		\node   (53) at (-4, 7) {$1$};
		\node   (54) at (-3, 7) {$1$};
		\node   (55) at (-2, 7) {$1$};
		\node   (57) at (0, 7) {$1$};
		\node   (58) at (1, 7) {$1$};
		\node   (59) at (1.5, 7) {$1$};
	\end{pgfonlayer}
	\begin{pgfonlayer}{edgelayer}
		\draw (4.center) to (7.center);
		\draw (5.center) to (10.center);
		\draw (6.center) to (13.center);
		\draw [bend right=90, looseness=3.50] (8.center) to (9.center);
		\draw [bend right=90, looseness=3.50] (11.center) to (12.center);
		\draw [bend right=90, looseness=3.50] (14.center) to (15.center);
		\draw (8.center) to (17.center);
		\draw [bend right=90, looseness=3.50] (23.center) to (24.center);
		\draw (18.center) to (9.center);
		\draw (33.center) to (42.center);
		\draw (34.center) to (43.center);
		\draw (35.center) to (44.center);
		\draw (36.center) to (45.center);
		\draw (37.center) to (46.center);
		\draw [bend right=90, looseness=3.50] (25.center) to (26.center);
		\draw (19.center) to (11.center);
		\draw (20.center) to (12.center);
		\draw [bend right=90, looseness=3.50] (27.center) to (28.center);
		\draw (14.center) to (21.center);
		\draw (22.center) to (15.center);
		\draw (50.center) to (41.center);
		\draw (49.center) to (40.center);
		\draw (48.center) to (39.center);
	\end{pgfonlayer}
\end{tikzpicture}
\end{center}

It is clear that the image of this diagram under the functor $\mathcal{F}_n^{\text{irr}}$ is a non-zero homomorphism which sends the vector $v_{k_1}\otimes v_{k_2}\otimes \cdots \otimes v_{k_l}$ to the vector $v_1\otimes v_1 \otimes\cdots \otimes v_1$, and therefore, $\mathcal{F}_n^{\text{irr}}$ is full. 
\end{proof}

\begin{theorem}
\label{thm:CnirrFaithful}
The functor $\mathcal{F}_n^{\text{irr}}:  \mathcal{C}_n^{\text{irr}} \longrightarrow \mathbf{C}_n\textbf{-mod}_{\text{irr}}$ is faithful.
\end{theorem}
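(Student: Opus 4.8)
The plan is to prove faithfulness of $\mathcal{F}_n^{\text{irr}}$ one morphism space at a time, by combining two facts: (i) every $\operatorname{Hom}$-space of $\mathcal{C}_n^{\text{irr}}$ is at most one-dimensional, and (ii) $\mathcal{F}_n^{\text{irr}}$ kills no nonzero diagram. Granting these, fix objects $X,Y$. The space $\operatorname{Hom}_{\mathcal{C}_n^{\text{irr}}}(X,Y)$ is spanned over $\CC$ by diagrams, so by (i) it is either $0$, in which case injectivity of $\mathcal{F}_n^{\text{irr}}$ there is automatic, or it is spanned by a single nonzero diagram $d$, in which case (ii) gives $\mathcal{F}_n^{\text{irr}}(d)\ne 0$ and hence $\mathcal{F}_n^{\text{irr}}$ is injective on $\operatorname{Hom}_{\mathcal{C}_n^{\text{irr}}}(X,Y)$. (Equivalently, one may combine (i) with the fullness already proved and with (ii).) Fact (ii) is immediate: $\mathcal{F}_n^{\text{irr}}$ is monoidal and sends each generating morphism $m_{a,b}$, $s_c^{a,b}$, $\mathrm{id}_a$ to a $\CC$-linear isomorphism between one-dimensional $\mathbf{C}_n$-modules, so the image of any diagram is a composite of tensor products of isomorphisms, hence an isomorphism. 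Thus the whole proof reduces to (i), i.e. to the claim that \emph{any two parallel diagrams in $\mathcal{C}_n^{\text{irr}}$ are equal as morphisms}.

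To prove (i) I would reduce to Lemma \ref{lem:AnyDiagIsIdentity} using ``collapsing'' isomorphisms. First, for each object $Z=[a_1,\dots,a_k]$ with $A:=a_1+\cdots+a_k\in\ZZ/n\ZZ$, I would build a merge-tree diagram $M_Z\colon Z\to[A]$ and a split-tree diagram $S_Z\colon[A]\to Z$ with $S_Z\circ M_Z=\mathrm{id}_Z$. This is a short induction on $k$: with $Z'=[a_2,\dots,a_k]$ and $A'=a_2+\cdots+a_k$, take $M_Z=m_{a_1,A'}^{A}\circ(\mathrm{id}_{[a_1]}\otimes M_{Z'})$ and $S_Z=(\mathrm{id}_{[a_1]}\otimes S_{Z'})\circ s_{A}^{a_1,A'}$; the relation $s_{A}^{a_1,A'}\circ m_{a_1,A'}^{A}=\mathrm{id}_{[a_1,A']}$ from \eqref{eqn:CnnRelation2} then collapses $S_Z\circ M_Z$ to $\mathrm{id}_{[a_1]}\otimes(S_{Z'}\circ M_{Z'})$, which equals $\mathrm{id}_Z$ by the inductive hypothesis.

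Next I would observe that every generating morphism — and hence every diagram — relates objects with the same label-sum in $\ZZ/n\ZZ$; so if $\operatorname{Hom}_{\mathcal{C}_n^{\text{irr}}}(X,Y)$ contains a diagram at all then the label-sums of $X$ and $Y$ agree, call this value $A$, and otherwise the space is $0$. The key claim is then: \emph{every} diagram $e\colon Z\to[A]$ (with label-sum of $Z$ equal to $A$) equals $M_Z$. Here Lemma \ref{lem:AnyDiagIsIdentity} enters: $e\circ S_Z$ lies in $\operatorname{Hom}_{\mathcal{C}_n^{\text{irr}}}([A],[A])$, hence equals $\mathrm{id}_{[A]}$, so
\[
e=e\circ(S_Z\circ M_Z)=(e\circ S_Z)\circ M_Z=\mathrm{id}_{[A]}\circ M_Z=M_Z .
\]
Applying this to $e=M_Y\circ d$ and to $e=M_Y\circ d'$ for two diagrams $d,d'\colon X\to Y$ gives $M_Y\circ d=M_X=M_Y\circ d'$; composing on the left with $S_Y$ and using $S_Y\circ M_Y=\mathrm{id}_Y$ yields $d=d'$, which is (i). In particular $\operatorname{Hom}_{\mathcal{C}_n^{\text{irr}}}(X,Y)$ is spanned by the single morphism $S_Y\circ M_X$, which is nonzero exactly when the label-sums of $X$ and $Y$ agree (its image under $\mathcal{F}_n^{\text{irr}}$ being a composite of isomorphisms).

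The main obstacle is this middle step — reducing an arbitrary parallel pair to Lemma \ref{lem:AnyDiagIsIdentity} via the inverse pair $(M_Z,S_Z)$; everything before and after it is formal. Within that step the delicate point is verifying that the inverse-pair identity $S_Z\circ M_Z=\mathrm{id}_Z$ (equivalently, that a merge followed by the matching split is the identity strand) really does follow from the listed relations \eqref{eqn:CnnRelation1}--\eqref{eqn:CnnRelation3}, together with the straightforward but slightly fussy bookkeeping of which relation is applied where. Lemma \ref{lem:TensorDiff} is implicitly in play behind Lemma \ref{lem:AnyDiagIsIdentity} — it forces the numbers of merges and splits in an endomorphism to agree — so no further counting argument is needed here.
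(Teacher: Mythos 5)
Your proposal is correct and follows essentially the same route as the paper: collapse the source and target to single strands via merge/split trees using the relation $s\circ m=\operatorname{id}$, invoke Lemma \ref{lem:AnyDiagIsIdentity} to identify the resulting single-strand endomorphism with the identity, and conclude that each $\operatorname{Hom}$-space is spanned by one diagram on which the functor is nonzero. Your version merely packages the paper's iterative diagram manipulation into the explicit retract pair $(M_Z,S_Z)$ and makes explicit the final observation (left implicit in the paper) that $\mathcal{F}_n^{\text{irr}}$ sends the spanning diagram to a nonzero map.
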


\begin{proof}
Let $[a_1,a_2, \dots, a_{m_1}] \in \left(\ZZ/n\ZZ\right)^{m_1}$ and $[b_1,b_2,\dots, b_{m_2}] \in \left(\ZZ/n\ZZ\right)^{m_2}$.  We have that Hom$_{\mathbf{C}_n}\left(\bigotimes\limits_{i=1}^{m_2}\mathbf{C}_n^{(a_i)},\bigotimes\limits_{j=1}^{m_2}\mathbf{C}_n^{(b_j)}\right)$ has dimension $1$ iff $\sum\limits_{i=1}^{m_1}a_i\equiv\sum\limits_{j=1}^{m_2}b_j\mod n$ and $0$ otherwise.  Thus, it suffices to show that there is one morphism up to scaling by $\CC$ in $\mathcal{C}_n^{\text{irr}}$ between $[a_1,a_2, \dots, a_{m_1}]$ and $[b_1,b_2,\dots, b_{m_2}]$ when $\sum\limits_{i=1}^{m_1}a_i\equiv\sum\limits_{j=1}^{m_2}b_j\mod n$.

Consider a diagram $d\in \operatorname{Hom}_{\mathcal{C}_n^{\text{irr}}}\left(\bigotimes\limits_{i=1}^{m_1}a_i, \bigotimes\limits_{j=1}^{m_2}b_j\right)$ where $\sum\limits_{i=1}^{m_1}a_i\equiv\sum\limits_{j=1}^{m_2}b_j\mod n$:

\begin{center}
\begin{tikzpicture}[scale=.75]
	\begin{pgfonlayer}{nodelayer}
		\node   (0) at (-1.5, -1) {};
		\node   (1) at (1.5, -1) {};
		\node   (2) at (-1.5, 1) {};
		\node   (3) at (1.5, 1) {};
		\node   (4) at (-1, 1) {};
		\node   (5) at (-0.5, 1) {};
		\node   (6) at (1, 1) {};
		\node   (7) at (-1.25, -1) {};
		\node   (8) at (-0.75, -1) {};
		\node   (9) at (1.25, -1) {};
		\node   (10) at (-1, 2) {};
		\node   (11) at (-0.5, 2) {};
		\node   (12) at (1, 2) {};
		\node   (13) at (-1.25, -2) {};
		\node   (14) at (-0.75, -2) {};
		\node   (15) at (1.25, -2) {};
		\node   (16) at (0.25, 1.5) {$\cdots$};
		\node   (17) at (0.25, -1.5) {$\cdots$};
		\node   (18) at (0, 0) {$d$};
		\node   (19) at (-1.25, -2.5) {$a_1$};
		\node   (20) at (-0.75, -2.5) {$a_2$};
		\node   (21) at (1.25, -2.5) {$a_{m_1}$};
		\node   (22) at (-1, 2.5) {$b_1$};
		\node   (23) at (-0.5, 2.5) {$b_2$};
		\node   (24) at (1, 2.5) {$b_{m_2}$};
	\end{pgfonlayer}
	\begin{pgfonlayer}{edgelayer}
		\draw (10.center) to (4.center);
		\draw (11.center) to (5.center);
		\draw (12.center) to (6.center);
		\draw (2.center) to (3.center);
		\draw (3.center) to (1.center);
		\draw (1.center) to (0.center);
		\draw (0.center) to (2.center);
		\draw (7.center) to (13.center);
		\draw (8.center) to (14.center);
		\draw (9.center) to (15.center);
	\end{pgfonlayer}
\end{tikzpicture}
\end{center}

Using the relation from \ref{eqn:CnnRelation2}:

\begin{center}
\begin{tikzpicture}[scale=.45, baseline=0]
	\begin{pgfonlayer}{nodelayer}
		\node   (9) at (-2, -1) {};
		\node   (10) at (-1, -1) {};
		\node   (11) at (-1.5, 0) {};
		\node   (12) at (-1.5, 1) {};
		\node   (13) at (-1, 2) {};
		\node   (14) at (-2, 2) {};
		\node   (19) at (-2, -1.5) {$a$};
		\node   (20) at (-1, -1.5) {$b$};
		\node   (21) at (-2, 2.5) {$a$};
		\node   (22) at (-1, 2.5) {$b$};
		\node   (23) at (0, 0.5) {$=$};
		\node   (24) at (1, 2) {};
		\node   (25) at (2, 2) {};
		\node   (26) at (1, -1) {};
		\node   (27) at (2, -1) {};
		\node   (28) at (1, 2.5) {$a$};
		\node   (29) at (2, 2.5) {$b$};
		\node   (30) at (1, -1.5) {$a$};
		\node   (31) at (2, -1.5) {$b$};
	\end{pgfonlayer}
	\begin{pgfonlayer}{edgelayer}
		\draw [bend right=90, looseness=3.50] (14.center) to (13.center);
		\draw (12.center) to (11.center);
		\draw [bend left=90, looseness=3.50] (9.center) to (10.center);
		\draw (24.center) to (26.center);
		\draw (25.center) to (27.center);
	\end{pgfonlayer}
\end{tikzpicture} 
\end{center}

\noindent we can rewrite $d$ as

\begin{center}
\begin{tikzpicture}[baseline=0, scale=.75]
	\begin{pgfonlayer}{nodelayer}
		\node   (0) at (-1.5, -1) {};
		\node   (1) at (1.5, -1) {};
		\node   (2) at (-1.5, 1) {};
		\node   (3) at (1.5, 1) {};
		\node   (4) at (-1, 1) {};
		\node   (5) at (-0.5, 1) {};
		\node   (6) at (1, 1) {};
		\node   (7) at (-1.25, -1) {};
		\node   (8) at (-0.75, -1) {};
		\node   (9) at (1.25, -1) {};
		\node   (10) at (-1, 2) {};
		\node   (11) at (-0.5, 2) {};
		\node   (12) at (1, 2) {};
		\node   (13) at (-1.25, -2) {};
		\node   (14) at (-0.75, -2) {};
		\node   (15) at (1.25, -2) {};
		\node   (16) at (0.25, 1.5) {$\cdots$};
		\node   (17) at (0.25, -1.5) {$\cdots$};
		\node   (18) at (0, 0) {$d$};
		\node   (19) at (-1.25, -3.5) {};
		\node   (20) at (-0.75, -3.5) {};
		\node   (21) at (1.25, -2.5) {$a_{m_1}$};
		\node   (22) at (-1, 3.5) {};
		\node   (23) at (-0.5, 3.5) {};
		\node   (24) at (1, 2.5) {$b_{m_2}$};
		\node   (25) at (-0.75, 3) {};
		\node   (26) at (-0.75, 2.5) {};
		\node   (27) at (-1, -3) {};
		\node   (28) at (-1, -2.5) {};
		\node   (29) at (-1.25, -4) {$a_1$};
		\node   (30) at (-0.75, -4) {$a_2$};
		\node   (31) at (-1, 4) {$b_1$};
		\node   (32) at (-0.5, 4) {$b_2$};
		\node   (33) at (-0.5, 2.75) {$c$};
		\node   (33) at (-0.75, -2.75) {$c'$};
	\end{pgfonlayer}
	\begin{pgfonlayer}{edgelayer}
		\draw (10.center) to (4.center);
		\draw (11.center) to (5.center);
		\draw (12.center) to (6.center);
		\draw (2.center) to (3.center);
		\draw (3.center) to (1.center);
		\draw (1.center) to (0.center);
		\draw (0.center) to (2.center);
		\draw (7.center) to (13.center);
		\draw (8.center) to (14.center);
		\draw (9.center) to (15.center);
		\draw [bend left=90, looseness=3.25] (10.center) to (11.center);
		\draw [bend right=90, looseness=3.25] (22.center) to (23.center);
		\draw (25.center) to (26.center);
		\draw [bend right=90, looseness=3.50] (13.center) to (14.center);
		\draw [bend left=90, looseness=3.25] (19.center) to (20.center);
		\draw (28.center) to (27.center);
	\end{pgfonlayer}
\end{tikzpicture}.
\end{center}

Now, consider using the relation iteratively to get the following equality:

\begin{center}
\begin{tikzpicture}[baseline=0]
	\begin{pgfonlayer}{nodelayer}
		\node   (0) at (-1.5, -1) {};
		\node   (1) at (1.5, -1) {};
		\node   (2) at (-1.5, 1) {};
		\node   (3) at (1.5, 1) {};
		\node   (4) at (-1, 1) {};
		\node   (5) at (-0.5, 1) {};
		\node   (6) at (1, 1) {};
		\node   (7) at (-1.25, -1) {};
		\node   (8) at (-0.75, -1) {};
		\node   (9) at (1.25, -1) {};
		\node   (10) at (-1, 2) {};
		\node   (11) at (-0.5, 2) {};
		\node   (12) at (1, 2) {};
		\node   (13) at (-1.25, -2) {};
		\node   (14) at (-0.75, -2) {};
		\node   (15) at (1.25, -2) {};
		\node   (16) at (0.25, 1.5) {$\cdots$};
		\node   (17) at (0.25, -1.5) {$\cdots$};
		\node   (18) at (0, 0) {$d$};
		\node   (19) at (-1.25, -2.5) {$a_1$};
		\node   (20) at (-0.75, -2.5) {$a_2$};
		\node   (21) at (1.25, -2.5) {$a_{m_1}$};
		\node   (22) at (-1, 2.5) {$b_1$};
		\node   (23) at (-0.5, 2.5) {$b_2$};
		\node   (24) at (1, 2.5) {$b_{m_2}$};
	\end{pgfonlayer}
	\begin{pgfonlayer}{edgelayer}
		\draw (10.center) to (4.center);
		\draw (11.center) to (5.center);
		\draw (12.center) to (6.center);
		\draw (2.center) to (3.center);
		\draw (3.center) to (1.center);
		\draw (1.center) to (0.center);
		\draw (0.center) to (2.center);
		\draw (7.center) to (13.center);
		\draw (8.center) to (14.center);
		\draw (9.center) to (15.center);
	\end{pgfonlayer}
\end{tikzpicture}
\hspace{10mm}$=$\hspace{10mm}
\begin{tikzpicture}[scale=.75,baseline=0]
	\begin{pgfonlayer}{nodelayer}
		\node   (0) at (-1.5, -1) {};
		\node   (1) at (1.5, -1) {};
		\node   (2) at (-1.5, 1) {};
		\node   (3) at (1.5, 1) {};
		\node   (4) at (-1, 1) {};
		\node   (5) at (-0.5, 1) {};
		\node   (6) at (1, 1) {};
		\node   (7) at (-1.25, -1) {};
		\node   (8) at (-0.75, -1) {};
		\node   (9) at (1.25, -1) {};
		\node   (10) at (-1, 2) {};
		\node   (11) at (-0.5, 2) {};
		\node   (12) at (1, 2) {};
		\node   (13) at (-1.25, -2) {};
		\node   (14) at (-0.75, -2) {};
		\node   (15) at (1.25, -2) {};
		\node   (16) at (0.25, 1.5) {$\cdots$};
		\node   (17) at (0.25, -1.5) {$\cdots$};
		\node   (18) at (0, 0) {$d$};
		\node   (19) at (-1.25, -6) {$a_1$};
		\node   (20) at (-0.75, -6) {$a_2$};
		\node   (21) at (1.25, -6) {$a_{m_1}$};
		\node   (22) at (-1, 6) {$b_1$};
		\node   (23) at (-0.5, 6) {$b_2$};
		\node   (24) at (1, 6) {$b_{m_2}$};
		\node   (25) at (-1.25, -5.5) {};
		\node   (26) at (-0.75, -5.5) {};
		\node   (27) at (1.25, -5.5) {};
		\node   (28) at (1, -4.25) {};
		\node   (29) at (1, -3.25) {};
		\node   (30) at (1.25, -3.5) {};
		\node   (31) at (1.25, -4) {};
		\node   (32) at (-1, -5) {};
		\node   (33) at (-1, -2.5) {};
		\node   (34) at (-0.5, -2.75) {};
		\node   (35) at (-0.5, -4.75) {};
		\node   (36) at (0.25, -3) {$\cdots$};
		\node   (37) at (0.25, -4.5) {$\cdots$};
		\node   (38) at (-1, 5.5) {};
		\node   (39) at (-0.5, 5.5) {};
		\node   (40) at (1, 5.5) {};
		\node   (41) at (-1, 2) {};
		\node   (42) at (-0.5, 2) {};
		\node   (43) at (1, 2) {};
		\node   (44) at (0.75, 3.25) {};
		\node   (45) at (0.75, 4.25) {};
		\node   (46) at (1, 4) {};
		\node   (47) at (1, 3.5) {};
		\node   (48) at (-0.75, 2.5) {};
		\node   (49) at (-0.75, 5) {};
		\node   (50) at (-0.25, 4.75) {};
		\node   (51) at (-0.25, 2.75) {};
		\node   (52) at (0.25, 4.5) {$\cdots$};
		\node   (53) at (0.25, 3) {$\cdots$};
		\node   (54) at (1.5, -3.75) {$a$};
		\node   (55) at (1.25, 3.75) {$b$};
	\end{pgfonlayer}
	\begin{pgfonlayer}{edgelayer}
		\draw (10.center) to (4.center);
		\draw (11.center) to (5.center);
		\draw (12.center) to (6.center);
		\draw (2.center) to (3.center);
		\draw (3.center) to (1.center);
		\draw (1.center) to (0.center);
		\draw (0.center) to (2.center);
		\draw (7.center) to (13.center);
		\draw (8.center) to (14.center);
		\draw (9.center) to (15.center);
		\draw (30.center) to (31.center);
		\draw [in=60, out=90, looseness=1.75] (27.center) to (28.center);
		\draw [in=-60, out=-90, looseness=1.75] (15.center) to (29.center);
		\draw [bend left=90, looseness=3.25] (25.center) to (26.center);
		\draw [bend right=90, looseness=3.50] (13.center) to (14.center);
		\draw [in=255, out=-90, looseness=2.50] (33.center) to (34.center);
		\draw [in=90, out=90, looseness=2.50] (32.center) to (35.center);
		\draw (46.center) to (47.center);
		\draw [in=60, out=90, looseness=1.75] (43.center) to (44.center);
		\draw [in=-60, out=-90, looseness=1.75] (40.center) to (45.center);
		\draw [bend left=90, looseness=3.25] (41.center) to (42.center);
		\draw [bend right=90, looseness=3.50] (38.center) to (39.center);
		\draw [in=255, out=-90, looseness=2.50] (49.center) to (50.center);
		\draw [in=90, out=90, looseness=2.50] (48.center) to (51.center);
	\end{pgfonlayer}
\end{tikzpicture}
\end{center}

\noindent where $a\equiv\sum\limits_{i=1}^{m_1}a_i \mod n$ and $b\equiv\sum\limits_{j=1}^{m_2}b_j \mod n$, and since $\sum\limits_{i=1}^{m_1}a_i\equiv\sum\limits_{j=1}^{m_2}b_j\mod n$, then $a\equiv b\mod n$.  Thus by Lemma \ref{lem:AnyDiagIsIdentity} the diagram on the right hand side of the equation is equal to

\begin{center}
\begin{tikzpicture}
	\begin{pgfonlayer}{nodelayer}
		\node   (19) at (-1.5, -2.5) {$a_1$};
		\node   (20) at (-1, -2.5) {$a_2$};
		\node   (21) at (1, -2.5) {$a_{m_1}$};
		\node   (22) at (-1, 2.5) {$b_1$};
		\node   (23) at (-0.5, 2.5) {$b_2$};
		\node   (24) at (1, 2.5) {$b_{m_2}$};
		\node   (25) at (-1.5, -2) {};
		\node   (26) at (-1, -2) {};
		\node   (27) at (1, -2) {};
		\node   (28) at (0.75, -0.75) {};
		\node   (31) at (1, -0.5) {};
		\node   (32) at (-1.25, -1.5) {};
		\node   (35) at (-0.75, -1.25) {};
		\node   (37) at (0, -1) {$\cdots$};
		\node   (38) at (-1, 2) {};
		\node   (39) at (-0.5, 2) {};
		\node   (40) at (1, 2) {};
		\node   (45) at (0.75, 0.75) {};
		\node   (46) at (1, 0.5) {};
		\node   (49) at (-0.75, 1.5) {};
		\node   (50) at (-0.25, 1.25) {};
		\node   (52) at (0.25, 1) {$\cdots$};
		\node   (53) at (1.25, 0) {$a$};
	\end{pgfonlayer}
	\begin{pgfonlayer}{edgelayer}
		\draw [in=60, out=90, looseness=1.75] (27.center) to (28.center);
		\draw [bend left=90, looseness=3.25] (25.center) to (26.center);
		\draw [in=90, out=90, looseness=2.50] (32.center) to (35.center);
		\draw [in=-60, out=-90, looseness=1.75] (40.center) to (45.center);
		\draw [bend right=90, looseness=3.50] (38.center) to (39.center);
		\draw [in=255, out=-90, looseness=2.50] (49.center) to (50.center);
		\draw (46.center) to (31.center);
	\end{pgfonlayer}
\end{tikzpicture}
\end{center}

\noindent where $a\equiv \sum\limits_{i=1}^{m_1}a_i\equiv\sum\limits_{j=1}^{m_2}b_j\mod n$.  Thus, there is one diagram up to scaling in $\operatorname{Hom}_{\mathcal{C}_n^{\text{irr}}}\left(\bigotimes\limits_{i=1}^{m_1}a_i, \bigotimes\limits_{j=1}^{m_2}b_j\right)$.  Therefore, the functor $\mathcal{F}_n^{\text{irr}}$ is faithful.

\end{proof}

Now that we are familiar with a specific example of the type of diagrammatic category we would like to construct, the next section develops diagrammatic categories which utilize the representation graphs given a group $G$ and a module $V$.


\section{The Categories \texorpdfstring{$G$}{G}-\textbf{mod}\texorpdfstring{$_{\text{irr}}$}{irr}
 and \texorpdfstring{$\mathbf{Dgrams}_{R(V,G)}$}{DgramsRVG}}
\label{chpt:Dgrams}


\noindent Let $\Gamma$ be a directed graph with no multiple parallel edges, that is, no two nodes have two or more directed edges with the same direction between them, and with the set of vertices indexed by the set $I_{\Gamma}$.  For example, let $\Gamma$ be the representation graph of one of the finite subgroups of $SU(2)$ as discussed in \ref{chpt:Prelims}.  The constructions in this section can be used to define a diagrammatic category associated to the graph $\Gamma$.  Furthermore, as we have done in this paper, one can start with a semisimple symmetric monoidal $k$-linear category over some field $k$ and consider the full subcategory $\mathcal{C}$ where the objects are monoidally generated by the simple objects.  Regardless of whether or not this category comes from representation theory, we may construct a representation graph.  That is, we may fix a simple object $x$ and construct the graph with nodes corresponding to the simple objects and a directed edge from vertex $v$ to vertex $u$ if the simple object corresponding to $u$ is a direct summand of $x\otimes v$.  If this directed graph has no multiple edges, then using the ideas in this section, one can diagrammatically define a category which is categorically equivalent to $\mathcal{C}$.


\subsection{The Category \texorpdfstring{$G$}{G}-\textbf{mod}\texorpdfstring{$_{\text{irr}}$}{irr}}
\label{sec:Gmodirr}


We first consider when $\Gamma$ is the representation graph for a group $G$.  Let $G$ be a group, not necessarily finite, and let $V$ be a $G$-module such that the resulting representation graph $R(V,G)$ is a connected graph with no multiple parallel edges.  Furthermore, we will assume that $V$ corresponds to a node in $R(V,G)$.  That is, we will assume $V$ is a simple $G$-module.  It is worth mentioning here that $SU(2)$ and the finite subgroups of $SU(2)$ are examples of such $G$, but there are others as well.  See \cite{AEA22, EP14, KO02, Fren10}.

First, let us set some notation.  Let $\left\{G^{(a)}\right\}_{a\in I_G}$ be a set of fixed isomorphism class representatives of simple $G$-modules with $I_G$ being an indexing set for the finite-dimensional simple $G$-modules.  Furthermore, as $V$ is a simple $G$-module and $I_G$ is an indexing set for the simple, $G$-modules, one of the elements of $I_G$ corresponds to $V$.  For notational convenience, we let this index be the symbol $1$.  In particular, we will use $V$ and $G^{(1)}$ interchangeably.  Furthermore, for $a,b \in I_G$, we will also use $b\rightarrow a$ to denote that $b$ is adjacent to $a$ in $R(V,G)$.  Note that in an undirected graph $b\rightarrow a$ implies $a\rightarrow b$.

\begin{definition}
We let $G$-\textbf{mod}$_{\text{irr}}$ be the full monoidal subcategory of $G$-\textbf{mod} with objects generated by $G^{(a)}$ where $a\in I_{G}$.
\end{definition}

Notice, the morphisms of this category are elements of the $\CC$-vector spaces $$\operatorname{Hom}_{G}\left(\bigotimes\limits_{i=1}^n G^{(a_i)},\bigotimes\limits_{j=1}^m G^{(b_j)}\right)$$ where $a_i, b_j\in I_{G}$ and $n,m\in\NN$.  

We define certain $G$-module homomorphisms concretely.  Since $R(V,G)$ has no multiple edges by assumption, the space Hom$_{G}\left(V\otimes G^{(a)}, G^{(b)}\right)$ is $1$-dimensional for each $b$ adjacent to $a$ in $R(V,G)$ and $0$-dimensional otherwise.  We may choose to fix a map in Hom$_{G}\left(V\otimes G^{(a)}, G^{(b)}\right)$ for each $b$ adjacent to $a$ and name them $m_{1\ a}^{\ b}$.  Notice that the choice of each $m_{1\ a}^{\ b}$, while non-trivial, is only up up to a scalar by Schur's Lemma.  Furthermore, with the $m_{1\ a}^{\ b}$ fixed, for each $b$ which is adjacent to $a$ in $R(V,G)$ there are unique non-zero $G$-module homomorphisms, which we name $s_{\ b}^{1\ a}$, which span Hom$_{G}\left(G^{(b)}, V\otimes G^{(a)}\right)$ such that the following is satisfied:

\begin{equation}
\label{eqn:msidempotents}
\sum\limits_{b\rightarrow a}s_{\ b}^{1\ a} \circ m_{1\ a}^{\ b}=\operatorname{id}_{G^{(1)}\otimes G^{(a)}}.
\end{equation}

\begin{remark}
    If $R(V,G)$ has multiple edges from $a$ to $b$, then Hom$_{G}\left(V\otimes G^{(a)}, G^{(b)}\right)$ has dimension greater than $1$.  In this case, we may still define a set of maps which when extended linearly describe the whole space.  If the set is minimal, this is analogous to choosing a basis for Hom$_{G}\left(V\otimes G^{(a)}, G^{(b)}\right)$.  Furthermore, the relationships between the maps in Hom$_{G}\left(V\otimes G^{(a)}, G^{(b)}\right)$ and the maps in Hom$_{G}\left(G^{(b)}, V\otimes G^{(a)}\right)$ would require exploration.  As most groups admit representation graphs which contain multiple edges between nodes, it would be interesting to explore the extension of the ideas in this section to these much more ubiquitous representation graphs.
\end{remark}

Let us consider an example:  let $\mathbf{T}$ be the binary tetrahedral group.  We will use notation consistent with Example \ref{BTex}.

\begin{example}
We let $\mathbf{T}$-\textbf{mod}$_{\text{irr}}$ be the full monoidal subcategory of $\mathbf{T}$-\textbf{mod} with objects generated by $T^{(a)}$ with $a\in I_{\mathbf{T}}=\{1,2,3,4,3',4'\}$.  Notice, the morphisms of this category are in Hom$_{\mathbf{T}}\left(\bigotimes\limits_{k=1}^{n} T^{(a_k)},\bigotimes\limits_{\ell=1}^{m} T^{(b_{\ell})}\right)$.  

We will consider the following $ \mathbf{T}$-module homomorphisms:

\begin{center}
\begin{align*}
m_{1\ 0}^{\ 1}: &T^{(1)}\otimes T^{(0)}\longrightarrow T^{(1)}   &  
m_{1\ 1}^{\ 2}: &T^{(1)}\otimes T^{(1)}\longrightarrow T^{(2)}\\
&v_{-1}\otimes 1\mapsto v_{-1}   &   &v_{-1}\otimes v_{-1}\mapsto v_{-2}\\
&v_{1}\otimes 1\mapsto v_{1}   &   &v_{1}\otimes v_{1}\mapsto v_{2}\\ 
&    &    &v_{-1}\otimes v_{1} + v_{1}\otimes v_{-1}\mapsto v_{0'}\\
\end{align*}
\end{center}
\begin{center}
\begin{align*}
m_{1\ 1}^{\ 0}: &T^{(1)}\otimes T^{(1)}\longrightarrow T^{(0)}   &   
m_{1\ 2}^{\ 1}: &T^{(1)}\otimes T^{(2)}\longrightarrow T^{(1)}\\
&v_{-1}\otimes v_{1} - v_{1}\otimes v_{-1}\mapsto 1   &   &-\frac{1}{2}v_{-1}\otimes v_{0'} + v_{1}\otimes v_{-2}\mapsto v_{-1}\\
 &   &   &\frac{1}{2}v_{1}\otimes v_{0'} - v_{-1}\otimes v_{2}\mapsto v_{1}\\
\end{align*}
\begin{align*}
m_{1\ 3}^{\ 2}: T^{(1)}\otimes T^{(3)}\longrightarrow T^{(2)}   \\
v_{1}\otimes v_{3}\mapsto v_{2} - i\sqrt{3}v_{-2}   \\
v_{-1}\otimes v_{-3}\mapsto v_{-2} - i\sqrt{3}v_{2}   \\
v_{-1}\otimes v_{3} + v_{1}\otimes v_{-3}\mapsto -2v_{0'}  
\end{align*}
\begin{align*}
m_{1\ 2}^{\ 3}: T^{(1)}\otimes T^{(2)}\longrightarrow T^{(3)}\\
-v_{-1}\otimes v_{0'}-v_{1}\otimes v_{-2} + i\sqrt{3}v_{1}\otimes v_{2} \mapsto v_{-3}\\
v_{1}\otimes v_{0'} + v_{-1}\otimes v_{2} - i\sqrt{3}v_{-1}\otimes v_{-2} \mapsto v_{3}\\
\end{align*}
\begin{align*}
m_{1\ 3'}^{\ 2}: T^{(1)}\otimes T^{(3')}\longrightarrow T^{(2)}\\
v_{1}\otimes v_{3^{\prime}}\mapsto v_{-2} + i\sqrt{3}v_{2}\\
v_{-1}\otimes v_{-3^{\prime}}\mapsto v_{2} + i\sqrt{3}v_{-2}\\
v_{-1}\otimes v_{3^{\prime}} + v_{1}\otimes v_{-3^{\prime}}\mapsto -2v_{0^{\prime}}
\end{align*}
\begin{align*}
m_{1\ 2}^{\ 3^{\prime}}: T^{(1)}\otimes T^{(2)}\longrightarrow T^{(3^{\prime})}\\
-v_{-1}\otimes v_{0^{\prime}}-v_{1}\otimes v_{-2} - i\sqrt{3}v_{1}\otimes v_{2} \mapsto v_{-3^{\prime}}\\
v_{1}\otimes v_{0^{\prime}} + v_{-1}\otimes v_{2} + i\sqrt{3}v_{-1}\otimes v_{-2} \mapsto v_{3^{\prime}}
\end{align*}
\begin{align*}
m_{1\ 3}^{\ 4}: &T^{(1)}\otimes T^{(3)}\longrightarrow T^{(4)}   & 
 m_{1\ 3^{\prime}}^{\ 4^{\prime}}: &T^{(1)}\otimes T^{(3^{\prime})}\longrightarrow T^{(4^{\prime})}\\
&v_{-1}\otimes v_{3} - v_{1}\otimes v_{-3}\mapsto v_{4}   &   &v_{-1}\otimes v_{3^{\prime}} - v_{1}\otimes v_{-3^{\prime}}\mapsto v_{4^{\prime}}\\
\end{align*}
\begin{align*}
m_{1\ 4}^{\ 3}: &T^{(1)}\otimes T^{(4)}\longrightarrow T^{(3)}   &  
  m_{1\ 4^{\prime}}^{\ 3^{\prime}}: &T^{(1)}\otimes T^{(4^{\prime})}\longrightarrow T^{(3^{\prime})}\\
&v_{-1}\otimes v_{4}\mapsto v_{-3}    &   &v_{-1}\otimes v_{4^{\prime}}\mapsto v_{-3^{\prime}}\\
&v_{1}\otimes v_{4}\mapsto v_{3}   &   &v_{1}\otimes v_{4^{\prime}}\mapsto v_{3^{\prime}}
\end{align*}
\end{center}

We will then define $s^{1\ a}_{\ b}: T^{(b)}\longrightarrow T^{(1)}\otimes T^{(a)}$ to be the map satisfying the relation $\sum\limits_{b\rightarrow a}s^{1\ a}_{\ b} \circ m_{1\ a}^{\ b}=\operatorname{id}_{T^{(1)}\otimes T^{(b)}}$

\end{example}

Recall from Section \ref{sec:McKayGraphs} that $P(a,b)_k$ is the set paths from $a$ to $b$ of length $k$ and is subset of $P(a,b)$.  Let $\mathbf{p}=\left\{b_0, b_1, \dots, b_k\right\}\in P(a,b)_k$ where $b_0=1$ and $b_k=b$.  We fix $\pi_{\mathbf{p}}$ to be the map from $\left(G^{(1)}\right)^{\otimes k}$ onto the irreducible submodule $G^{(b)}$ using the previously fixed maps,  $m_{1\ a}^{\ b}$ and $s^{1\ a}_{\ b}$, in the following way:

\[
\pi_{\mathbf{p}}:=\left(m_{1\ b_{k-1}}^{\ b}\right)\circ \left(\operatorname{id}_V\otimes m_{1\ b_{k-2}}^{\ b_{k-1}}\right)\circ \cdots \circ \left(\left(\operatorname{id}_{G^{(1)}}\right)^{\otimes (k-3)}\otimes m_{1\ b_1}^{\ b_2}\right)\circ \left(\left(\operatorname{id}_{V}\right)^{\otimes (k-2)}\otimes m_{1\ 1}^{\ b_1}\right)
\]

\noindent Since the identity maps and the $m_{1\ a}^{\ b}$ are canonical (up to scaling), so then is $\pi_{\mathbf{p}}$.

Similarly, we let $\iota_{\mathbf{p}}$ be the map from $G^{(b)}$ into $\left(G^{(1)}\right)^{\otimes k}$ such that $\pi_{\mathbf{p}}\circ \iota_{\mathbf{p}}=\operatorname{id}_{G^{(b)}}$.  For each irreducible $G$-module $G^{(b)}$, there is a minimal $k_b$ such that $G^{(b)}\subset\left(G^{(1)}\right)^{\otimes k_b}$, and since the representation graph has no multiple edges, this corresponds to a single path $\mathbf{q}\in P(a,b)_{k_b}$.  Thus, $G^{(b)}$ shows up exactly once in $\left(G^{(1)}\right)^{\otimes k_b}$, and thus we let $\pi_{\mathbf{q}}$ and $\iota_{\mathbf{q}}$ be the corresponding projection and inclusion maps.


\subsection{The Category \texorpdfstring{$\mathbf{Dgrams}_{R(V,G)}$}{DgramsRVG}}


Now we turn to a diagrammatic category which needs only the data of the representation graphs $R(V,G)$ presented in the previous section to construct.

\begin{definition}
\label{def:DgramsRVG}
We let $\mathbf{Dgrams}_{R(V,G)}$ be the $\CC$-linear monoidal category with objects generated by $k\in I_{G}$ and morphisms generated by the following diagrams:

\begin{center}
\begin{tikzpicture}[scale=.5]
	\begin{pgfonlayer}{nodelayer}
		\node  (0) at (-3, -1) {};
		\node  (1) at (-3, 1) {};
		\node  (2) at (-1, -1) {};
		\node  (3) at (1, -1) {};
		\node  (4) at (0, 0) {};
		\node  (5) at (0, 1) {};
		\node  (6) at (2, 1) {};
		\node  (7) at (4, 1) {};
		\node  (8) at (3, 0) {};
		\node  (9) at (3, -1) {};
		\node  (10) at (-3, -1.5) {$a$};
		\node  (11) at (-3, 1.5) {$a$};
		\node  (12) at (-1, -1.5) {$1$};
		\node  (13) at (1, -1.5) {$b$};
		\node  (14) at (0, 1.5) {$c$};
		\node  (15) at (2, 1.5) {$1$};
		\node  (16) at (4, 1.5) {$b$};
		\node  (17) at (3, -1.5) {$c$};
	\end{pgfonlayer}
	\begin{pgfonlayer}{edgelayer}
		\draw (0.center) to (1.center);
		\draw [bend left=90, looseness=1.75] (2.center) to (3.center);
		\draw (4.center) to (5.center);
		\draw [bend right=90, looseness=1.75] (6.center) to (7.center);
		\draw (8.center) to (9.center);
	\end{pgfonlayer}
\end{tikzpicture}
\end{center} 

\noindent where $a,b, \text{ and }c\in I_{G}$ such that $c$ is adjacent to $b$ in the representation graph, $R(V,G)$.

The generators are subject to the following relations:

\begin{equation}
\label{eqn:DgramPop}
\begin{tikzpicture}[scale=0.5, baseline=-2]
	\begin{pgfonlayer}{nodelayer}
		\node   (0) at (-2, 2) {};
		\node   (1) at (-2, 1) {};
		\node   (2) at (-2.5, 0) {};
		\node   (3) at (-1.5, 0) {};
		\node   (4) at (-2, -1) {};
		\node   (5) at (-2, -2) {};
		\node   (6) at (-0.25, 0) {$=$};
		\node   (7) at (2, 1) {};
		\node   (8) at (2, -1) {};
		\node   (9) at (-2, -2.5) {$a$};
		\node   (10) at (2, -1.5) {$a$};
		\node   (11) at (2, 1.5) {$a$};
		\node   (12) at (-2, 2.5) {$a$};
		\node   (13) at (-1, 0) {$b$};
		\node   (14) at (-3, 0) {$1$};
		\node   (15) at (1, 0) {};
	\end{pgfonlayer}
	\begin{pgfonlayer}{edgelayer}
		\draw (0.center) to (1.center);
		\draw [bend left=90, looseness=3.50] (2.center) to (3.center);
		\draw (4.center) to (5.center);
		\draw (7.center) to (8.center);
		\draw [bend right=90, looseness=3.25] (2.center) to (3.center);
	\end{pgfonlayer}
\end{tikzpicture} \hspace{10mm} and\hspace{10mm} \begin{tikzpicture}[scale=0.5, baseline=-2]
	\begin{pgfonlayer}{nodelayer}
		\node   (0) at (-2, 0.5) {};
		\node   (1) at (-2, -0.5) {};
		\node   (2) at (-2.5, 1.5) {};
		\node   (3) at (-1.5, 1.5) {};
		\node   (4) at (-2.5, -1.5) {};
		\node   (5) at (-1.5, -1.5) {};
		\node   (9) at (0, 1) {};
		\node   (10) at (0, -1) {};
		\node   (11) at (1, 1) {};
		\node   (12) at (1, -1) {};
		\node   (13) at (-3.5, 0) {$\sum\limits_{b\rightarrow a}$};
		\node   (14) at (-1.5, 0) {$b$};
		\node   (15) at (-0.75, 0) {$=$};
		\node   (16) at (0, 1.5) {$1$};
		\node   (17) at (1, 1.5) {$a$};
		\node   (18) at (0, -1.5) {$1$};
		\node   (19) at (1, -1.5) {$a$};
		\node   (20) at (-2.5, -2) {$1$};
		\node   (21) at (-1.5, -2) {$a$};
		\node   (22) at (-2.5, 2) {$1$};
		\node   (23) at (-1.5, 2) {$a$};
	\end{pgfonlayer}
	\begin{pgfonlayer}{edgelayer}
		\draw [bend right=90, looseness=3.50] (2.center) to (3.center);
		\draw (0.center) to (1.center);
		\draw [bend left=90, looseness=3.50] (4.center) to (5.center);
		\draw (9.center) to (10.center);
		\draw (11.center) to (12.center);
	\end{pgfonlayer}
\end{tikzpicture}
\end{equation}

\end{definition}

Let us set some notation for some morphisms in the category $\mathbf{Dgrams}_{R(V,G)}$.  Recall the notation we introduced in Section \ref{sec:McKayGraphs}: for $\mathbf{p}=\left(1, b_1,\dots, b_{k-1},b\right)\in P(1,b)_{k}$, we let 

\begin{center}
\begin{tikzpicture}[scale=0.5, baseline=45]
	\begin{pgfonlayer}{nodelayer}
		\node   (0) at (-1, 0) {};
		\node   (1) at (0, 0) {};
		\node   (2) at (1.5, 0) {$\cdots$};
		\node   (3) at (3, 0) {};
		\node   (4) at (4, 0) {};
		\node   (5) at (5, 0) {};
		\node   (6) at (4.5, 1) {};
		\node   (8) at (3.75, 3) {};
		\node   (9) at (3.75, 2) {};
		\node   (11) at (3, 4) {};
		\node   (12) at (2.25, 5) {};
		\node   (13) at (1.25, 7) {};
		\node   (14) at (1.25, 6) {};
		\node   (17) at (-1, -0.5) {$1$};
		\node   (18) at (0, -0.5) {$1$};
		\node   (19) at (1.5, -0.5) {$\cdots$};
		\node   (20) at (3, -0.5) {$1$};
		\node   (21) at (4, -0.5) {$1$};
		\node   (22) at (5, -0.5) {$1$};
		\node   (23) at (3.25, 3.75) {$\cdot$};
		\node   (24) at (3.5, 3.5) {$\cdot$};
		\node   (25) at (3.75, 3.25) {$\cdot$};
		\node   (26) at (2.85, 5.5) {$b_{k-1}$};
		\node   (28) at (4.25, 2.5) {$b_2$};
		\node   (29) at (5, 1.5) {$b_{1}$};
		\node   (30) at (1.25, 7.5) {$b$};
		\node   (31) at (-2, 3.5) {$u_{\mathbf{p}}:=$};
	\end{pgfonlayer}
	\begin{pgfonlayer}{edgelayer}
		\draw [bend left=90, looseness=3.25] (4.center) to (5.center);
		\draw [in=105, out=90, looseness=2.75] (3.center) to (6.center);
		\draw (9.center) to (8.center);
		\draw [in=105, out=75, looseness=1.50] (1.center) to (11.center);
		\draw [in=120, out=75, looseness=1.50] (0.center) to (12.center);
		\draw (13.center) to (14.center);
	\end{pgfonlayer}
\end{tikzpicture}, and \begin{tikzpicture}[scale=0.5, baseline=45]
	\begin{pgfonlayer}{nodelayer}
		\node   (0) at (-1, 7) {};
		\node   (1) at (0, 7) {};
		\node   (2) at (1.5, 7) {$\cdots$};
		\node   (3) at (3, 7) {};
		\node   (4) at (4, 7) {};
		\node   (5) at (5, 7) {};
		\node   (6) at (4.5, 6) {};
		\node   (8) at (3.75, 4) {};
		\node   (9) at (3.75, 5) {};
		\node   (11) at (3, 3) {};
		\node   (12) at (2.25, 2) {};
		\node   (13) at (1.25, 0) {};
		\node   (14) at (1.25, 1) {};
		\node   (17) at (-1, 7.5) {$1$};
		\node   (18) at (0, 7.5) {$1$};
		\node   (19) at (1.5, 7.5) {$\cdots$};
		\node   (20) at (3, 7.5) {$1$};
		\node   (21) at (4, 7.5) {$1$};
		\node   (22) at (5, 7.5) {$1$};
		\node   (23) at (3.25, 3.25) {$\cdot$};
		\node   (24) at (3.5, 3.5) {$\cdot$};
		\node   (25) at (3.75, 3.75) {$\cdot$};
		\node   (26) at (2.85, 1.5) {$b_{k-1}$};
		\node   (28) at (4.25, 4.5) {$b_2$};
		\node   (29) at (5, 5.5) {$b_{1}$};
		\node   (30) at (1.25, -0.5) {$b$};
		\node   (31) at (-2, 3.5) {$d_{\mathbf{p}}:=$};
	\end{pgfonlayer}
	\begin{pgfonlayer}{edgelayer}
		\draw [bend right=90, looseness=3.25] (4.center) to (5.center);
		\draw [in=-105, out=-90, looseness=2.75] (3.center) to (6.center);
		\draw (9.center) to (8.center);
		\draw [in=-105, out=-75, looseness=1.50] (1.center) to (11.center);
		\draw [in=-120, out=-75, looseness=1.50] (0.center) to (12.center);
		\draw (13.center) to (14.center);
	\end{pgfonlayer}
\end{tikzpicture}
\end{center}

\begin{lemma} As morphisms in $\mathbf{Dgrams}_{R(V,G)}$
$$\sum\limits_{b\in I_{G}}\sum\limits_{\mathbf{p}\in P(1,b)_{k}} d_{\mathbf{p}}\circ u_{\mathbf{p}}=\operatorname{id}_{1^{\otimes k}}$$ for all $k\in\NN_{\geq 2}$.
\end{lemma}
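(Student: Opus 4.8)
The plan is to induct on $k$. For the base case $k=2$, the statement $\sum_{b\in I_G}\sum_{\mathbf{p}\in P(1,b)_2} d_{\mathbf{p}}\circ u_{\mathbf{p}}=\operatorname{id}_{1^{\otimes 2}}$ is exactly the second relation in (\ref{eqn:DgramPop}): a path $\mathbf{p}=(1,b)\in P(1,b)_2$ is just a choice of neighbor $b$ of the node $1$, the morphism $u_{\mathbf{p}}$ is the merge diagram with inputs $1,1$ and output $b$, the morphism $d_{\mathbf{p}}$ is the corresponding split diagram, and the sum over all such $b$ equals $\operatorname{id}_{1\otimes 1}$ by (\ref{eqn:DgramPop}). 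So no work is needed there beyond unwinding notation.

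For the inductive step, suppose the identity holds for some $k\geq 2$; I want it for $k+1$. Start from $\operatorname{id}_{1^{\otimes (k+1)}} = \operatorname{id}_{1^{\otimes k}}\otimes\operatorname{id}_1$, and apply the inductive hypothesis to the first $k$ strands, rewriting the left $k$ strands as $\sum_{b}\sum_{\mathbf{p}\in P(1,b)_k} d_{\mathbf{p}}\circ u_{\mathbf{p}}$. This gives a sum of diagrams, each of which, read from bottom to top, first folds the $k$ bottom strands up into a single strand labeled $b$ (the $u_{\mathbf{p}}$ part) and then unfolds it back down (the $d_{\mathbf{p}}$ part), with an extra identity strand labeled $1$ running alongside on the right. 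Now focus on the region near the top of $d_{\mathbf{p}}$ where the strand labeled $b$ emerges next to the free strand labeled $1$: insert the first relation in (\ref{eqn:DgramPop}) "in reverse" — that is, replace the identity on the two strands $b,1$ appearing at the top by the composite $\sum_{c\to b} (\text{split }c\to (1,b))\circ(\text{merge }(1,b)\to c)$ using relation (\ref{eqn:DgramPop}) applied with roles suitably matched (one must be a bit careful about which of the two relations applies, since the pop relation as stated has the $V$-strand on a specified side; the two relations in (\ref{eqn:DgramPop}) together handle both sides). Reassociating the merges, the bottom half now reads: fold the $k$ left strands into $b$, then merge $b$ with the right strand $1$ into $c$ — but this is precisely $u_{\mathbf{q}}$ for the extended path $\mathbf{q}=(1,b_1,\dots,b_{k-1},b,c)\in P(1,c)_{k+1}$; symmetrically the top half is $d_{\mathbf{q}}$. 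Summing over $b$, over $\mathbf{p}\in P(1,b)_k$, and over neighbors $c$ of $b$ reindexes exactly as summing over $c\in I_G$ and over $\mathbf{q}\in P(1,c)_{k+1}$, since a length-$(k+1)$ path from $1$ to $c$ is uniquely a length-$k$ path from $1$ to some $b$ followed by an edge $b\to c$. This yields $\sum_{c}\sum_{\mathbf{q}\in P(1,c)_{k+1}} d_{\mathbf{q}}\circ u_{\mathbf{q}}=\operatorname{id}_{1^{\otimes(k+1)}}$, completing the induction.

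The main obstacle I anticipate is bookkeeping rather than conceptual: making sure the "pop" relation (\ref{eqn:DgramPop}) is applied with the $V$-labeled ($=1$-labeled) strand on the correct side, and verifying that after reassociating merges the composite genuinely coincides with $u_{\mathbf{q}}$ as a morphism (not merely up to isotopy) — this is where one needs the associativity-type coherence built into the category, and it should be spelled out that $u_{\mathbf{q}}$ and $d_{\mathbf{q}}$ are well-defined independent of the order in which the merges/splits are performed. One clean way to sidestep part of this is to note that the category $\mathbf{Dgrams}_{R(V,G)}$ is monoidal with the $1$-strand identity available, so $\operatorname{id}_{1^{\otimes k}}\otimes\operatorname{id}_1$ decomposes via the hypothesis with the spectator strand carried along formally, and the only local move used is a single application of (\ref{eqn:DgramPop}) in a disk where exactly one $1$-strand and one $b$-strand enter; everything outside that disk is untouched.
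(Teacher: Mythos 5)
Your overall strategy is exactly the paper's: induct on $k$, observe that the base case $k=2$ is the second relation in (\ref{eqn:DgramPop}), and in the inductive step insert the resolution $\sum_{c\to b}(\text{split})\circ(\text{merge})=\operatorname{id}$ between $d_{\mathbf{p}}$ and $u_{\mathbf{p}}$, then reindex length-$(k+1)$ paths as length-$k$ paths followed by an edge. However, there is a concrete error in the execution: you place the spectator $1$-strand on the \emph{right}, writing $\operatorname{id}_{1^{\otimes k}}\otimes\operatorname{id}_1$ and then trying to merge the emerging $b$-strand with a $1$-strand to its right. No such generator exists in $\mathbf{Dgrams}_{R(V,G)}$: the only merge and split generators have the $1$-strand on the left ($1\otimes b\to c$ and $c\to 1\otimes b$), and correspondingly the only resolution-of-identity relation available is for $\operatorname{id}_{1\otimes a}$, not $\operatorname{id}_{a\otimes 1}$. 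Your parenthetical claim that ``the two relations in (\ref{eqn:DgramPop}) together handle both sides'' is false — the first relation is the bigon relation $a\to 1\otimes b\to a$ equals $\operatorname{id}_a$, not a mirror image of the second with the $1$-strand on the other side. Moreover $u_{\mathbf{q}}$ for the extended path $\mathbf{q}=(1,b_1,\dots,b_{k-1},b,c)$ is by definition built with the new strand entering from the left (the innermost merge is on the right and each successive strand merges in from the left), so even granting an ambient symmetry your composite would not literally equal $u_{\mathbf{q}}$.

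The fix is simply to write $\operatorname{id}_{1^{\otimes(k+1)}}=\sum_{b}\sum_{\mathbf{p}\in P(1,b)_k}\operatorname{id}_1\otimes(d_{\mathbf{p}}\circ u_{\mathbf{p}})$ with the new strand on the \emph{left}, and then insert $\sum_{c\to b}s\circ m=\operatorname{id}_{1\otimes b}$ between $\operatorname{id}_1\otimes d_{\mathbf{p}}$ and $\operatorname{id}_1\otimes u_{\mathbf{p}}$; the resulting bottom half is $u_{\mathbf{q}}$ on the nose and no reassociation or extra coherence is needed (indeed, unlike $\mathcal{C}_n^{\text{irr}}$, this category imposes no associativity relation, so your appeal to ``associativity-type coherence'' would have nothing to invoke). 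With that one change your argument coincides with the paper's.
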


\begin{proof}
We proceed by induction on $k$.  For $k=2$, the statement is precisely the second relation in Definition \ref{def:DgramsRVG}.  Now let us suppose that $$\sum\limits_{b\in I_{G}}\sum\limits_{\mathbf{p}\in P(1,b)_{k}} d_{\mathbf{p}}\circ u_{\mathbf{p}}=\operatorname{id}_{1^{\otimes k}}$$ for some $k\geq2$.  Using this hypothesis, we have

\begin{align*}
\operatorname{id}_{1^{\otimes (k+1)}}&=\sum\limits_{b\in I_G}\sum\limits_{\mathbf{p}\in P(1,b)_{k}} \operatorname{id}_{1}\otimes (d_{\mathbf{p}}\circ u_{\mathbf{p}})\\
&=\sum\limits_{b\in I_G}\sum\limits_{\mathbf{p}\in P(1,b)_{k}} \left(\sum\limits_{c\rightarrow b} \operatorname{id}_{1}\otimes d_{\mathbf{p}} \circ\begin{tikzpicture}[scale=0.5, baseline=5]
	\begin{pgfonlayer}{nodelayer}
		\node   (0) at (0, 0) {};
		\node   (1) at (0, 1) {};
		\node   (2) at (-0.5, 2) {};
		\node   (3) at (0.5, 2) {};
		\node   (4) at (-0.5, -1) {};
		\node   (5) at (0.5, -1) {};
		\node   (6) at (-0.5, 2.5) {$1$};
		\node   (7) at (0.5, 2.5) {$b$};
		\node   (8) at (-0.5, -1.5) {$1$};
		\node   (9) at (0.5, -1.5) {$b$};
		\node   (10) at (0.25, 0.5) {$c$};
	\end{pgfonlayer}
	\begin{pgfonlayer}{edgelayer}
		\draw [bend right=90, looseness=3.25] (2.center) to (3.center);
		\draw [bend left=90, looseness=3.25] (4.center) to (5.center);
		\draw (1.center) to (0.center);
	\end{pgfonlayer}
\end{tikzpicture} \circ \operatorname{id}_{1}\otimes u_{\mathbf{p}}\right)
\end{align*}
\[
=\mathlarger{\mathlarger{\sum}}\limits_{c\in I_{G}}\ \mathlarger{\mathlarger{\sum}}\limits_{\mathbf{p}\in P(1,c)_{k+1}} d_{\mathbf{p}}\circ u_{\mathbf{p}},
\] which was to be shown.
\end{proof}

The following gives an example of the construction of a diagrammatic category in this way.

\begin{example}
Recall the representation graph $R\left(T^{(1)},\mathbf{T}\right)$ from (\ref{eqn:RepGraphT}).  Then we can construct the $\CC$-linear monoidal category $\mathbf{Dgrams}_{R\left(T^{(1)},\mathbf{T}\right)}$ be  with objects generated by $k\in I_{\mathbf{T}}$ and morphisms generated and related in the same manner as in Definition \ref{def:DgramsRVG}.
\end{example}


\subsection{The Functor \texorpdfstring{$\mathcal{H}_{R(V,G)}$}{HRVG}}


The following definitions and theorems show that there is a full functor from $\mathbf{Dgrams}_{R(V,G)}$ onto $G\text{-}\textbf{mod}_{\text{irr}}$.  Recall the maps $m_{1\ a}^{\ b}$ and $s^{1\ a}_{\ b}$ given before (\ref{eqn:msidempotents}).

\begin{definition}
\label{def:HRVG}
We let $\mathcal{H}_{R(V,G)}:\mathbf{Dgrams}_{R(V,G)}\longrightarrow G\text{-}\textbf{mod}_{\text{irr}}$ be the monoidal $\CC$-linear functor determined by the following rules:

\begin{center}
\begin{align*}
 \mathcal{H}_{R(V,G)}(a)=G^{(a)}\text{ for }a\in  I_{G},\\
  \mathcal{H}_{R(V,G)}\left(\begin{tikzpicture}[scale=0.5,baseline=10]
	\begin{pgfonlayer}{nodelayer}
		\node  (0) at (-1, 0) {};
		\node   (1) at (0, 0) {};
		\node   (2) at (-0.5, 1) {};
		\node   (3) at (-0.5, 2) {};
		\node   (4) at (-1, -0.5) {$1$};
		\node   (5) at (0, -0.5) {$a$};
		\node   (6) at (-0.5, 2.5) {$b$};
	\end{pgfonlayer}
	\begin{pgfonlayer}{edgelayer}
		\draw [bend left=90, looseness=3.25] (0.center) to (1.center);
		\draw (3.center) to (2.center);
	\end{pgfonlayer}
\end{tikzpicture}\right)=m_{1\ a}^{\ b},\\
\text{and } \mathcal{H}_{R(V,G)}\left(\begin{tikzpicture}[scale=0.5,baseline=10] 
	\begin{pgfonlayer}{nodelayer}
		\node   (0) at (-1, 2) {};
		\node   (1) at (0, 2) {};
		\node   (2) at (-0.5, 1) {};
		\node   (3) at (-0.5, 0) {};
		\node   (4) at (-1, 2.5) {$1$};
		\node   (5) at (0, 2.5) {$a$};
		\node   (6) at (-0.5, -0.5) {$b$};
	\end{pgfonlayer}
	\begin{pgfonlayer}{edgelayer}
		\draw [bend right=90, looseness=3.25] (0.center) to (1.center);
		\draw (3.center) to (2.center);
	\end{pgfonlayer}
\end{tikzpicture}\right)=s_{\ a}^{1\ b}.
\end{align*}
\end{center}
\end{definition}

Note that by the way $m_{1\ a}^{\ b}$ and $s_{\ a}^{1\ b}$ were chosen, the relations in (\ref{eqn:DgramPop}) are automatically satisfied.  If the reader is exploring graphs with multiple edges, this definition must be expanded.

As there will be no confusion as to which representation graph, for the rest of this section we will suppress the $R(V,G)$ in the notation of Definitions \ref{def:DgramsRVG} and \ref{def:HRVG} and say that $$\mathbf{Dgrams}:=\mathbf{Dgrams}_{R(V,G)}\text{ and }\mathcal{H}:=\mathcal{H}_{R(V,G)}.$$

\begin{lemma}
The functor $ \mathcal{H}$ is full onto $\operatorname{Hom}_{G}\left(\left(G^{(1)}\right)^{\otimes k}, G^{(b)}\right)$ and 

\noindent $\operatorname{Hom}_{G}\left(G^{(b)},\left(G^{(1)}\right)^{\otimes k}\right)$ for any $k\in \mathbb{N}$ and any $b\in I_{G}$.
\end{lemma}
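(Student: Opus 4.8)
The plan is to show that $\mathcal{H}$ surjects onto each Hom-space by exhibiting, for every simple summand $G^{(b)}$ of $\left(G^{(1)}\right)^{\otimes k}$ and every path realizing that summand, a preimage under $\mathcal{H}$ of the canonical projection $\pi_{\mathbf{p}}$ and inclusion $\iota_{\mathbf{p}}$ from Section \ref{sec:Gmodirr}. First I would recall that for $\mathbf{p}=(1,b_1,\dots,b_{k-1},b)\in P(1,b)_k$ the map $\pi_{\mathbf{p}}$ was built by iteratively composing the fixed maps $m_{1\,a}^{\,b}$ tensored with identities, and that $\mathcal{H}$ was defined precisely so that the cup-with-label diagram maps to $m_{1\,a}^{\,b}$ and the cap-with-label diagram maps to $s_{\,a}^{1\,b}$. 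Comparing the recursive definition of $\pi_{\mathbf{p}}$ with the recursive definition of the diagram $u_{\mathbf{p}}$ (defined just before the preceding lemma), one sees term-by-term that $\mathcal{H}(u_{\mathbf{p}})=\pi_{\mathbf{p}}$, and dually $\mathcal{H}(d_{\mathbf{p}})=\iota_{\mathbf{p}}$; these are straightforward inductions on $k$ using that $\mathcal{H}$ is monoidal and $\CC$-linear.

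Next I would reduce an arbitrary morphism to these building blocks. Fix $b\in I_G$ and $k\in\NN$. If $G^{(b)}$ is not a summand of $\left(G^{(1)}\right)^{\otimes k}$, then $\operatorname{Hom}_G\left(\left(G^{(1)}\right)^{\otimes k},G^{(b)}\right)=0$ and there is nothing to prove (the zero morphism is in the image since $\mathbf{Dgrams}$ is $\CC$-linear). Otherwise, decompose $\left(G^{(1)}\right)^{\otimes k}\cong\bigoplus_{\mathbf{p}\in P(1,b)_k} G^{(b)}_{\mathbf{p}}\oplus(\text{other simples})$, where the copies of $G^{(b)}$ are indexed by paths because $R(V,G)$ has no multiple edges; the projections onto these copies are exactly the $\pi_{\mathbf{p}}$ and the inclusions are the $\iota_{\mathbf{p}}$, and they satisfy $\pi_{\mathbf{p}}\circ\iota_{\mathbf{q}}=\delta_{\mathbf{p},\mathbf{q}}\operatorname{id}_{G^{(b)}}$ together with the completeness relation $\sum_{b}\sum_{\mathbf{p}\in P(1,b)_k}\iota_{\mathbf{p}}\circ\pi_{\mathbf{p}}=\operatorname{id}$ (the representation-theoretic counterpart of the lemma just proved diagrammatically). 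Hence by Schur's lemma every $f\in\operatorname{Hom}_G\left(\left(G^{(1)}\right)^{\otimes k},G^{(b)}\right)$ is a linear combination $f=\sum_{\mathbf{p}\in P(1,b)_k}\lambda_{\mathbf{p}}\,\pi_{\mathbf{p}}$ with $\lambda_{\mathbf{p}}\in\CC$, and $f=\mathcal{H}\!\left(\sum_{\mathbf{p}}\lambda_{\mathbf{p}}\,u_{\mathbf{p}}\right)$. The dual statement, that every $g\in\operatorname{Hom}_G\left(G^{(b)},\left(G^{(1)}\right)^{\otimes k}\right)$ equals $\sum_{\mathbf{p}}\mu_{\mathbf{p}}\,\iota_{\mathbf{p}}=\mathcal{H}\!\left(\sum_{\mathbf{p}}\mu_{\mathbf{p}}\,d_{\mathbf{p}}\right)$, is entirely symmetric. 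For the small cases $k=0,1$ one checks directly: $\operatorname{Hom}_G\left(G^{(1)},G^{(b)}\right)$ is $\CC\operatorname{id}$ if $b=1$ and $0$ otherwise, and $\operatorname{id}_{G^{(1)}}=\mathcal{H}(\text{identity strand})$, while $k=0$ concerns only the unit object.

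The main obstacle is making precise the claim that the $\pi_{\mathbf{p}}$, as $\mathbf{p}$ ranges over $P(1,b)_k$, really are a complete set of projections onto the isotypic component $\left(G^{(1)}\right)^{\otimes k}[G^{(b)}]$ — i.e., that distinct paths give linearly independent, orthogonal copies of $G^{(b)}$ and that together they exhaust the multiplicity space. This is where the hypothesis that $R(V,G)$ has no multiple parallel edges is essential: it guarantees that at each stage $\operatorname{Hom}_G\left(V\otimes G^{(a)},G^{(c)}\right)$ is at most one-dimensional, so the branching is controlled entirely by paths, and an easy induction on $k$ (using the defining relation \eqref{eqn:msidempotents} for the $s_{\,b}^{1\,a}$) shows $\sum_{b}\sum_{\mathbf{p}\in P(1,b)_k}\iota_{\mathbf{p}}\circ\pi_{\mathbf{p}}=\operatorname{id}_{\left(G^{(1)}\right)^{\otimes k}}$ and $\pi_{\mathbf{p}}\circ\iota_{\mathbf{q}}=\delta_{\mathbf{p},\mathbf{q}}\operatorname{id}$. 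Granting this decomposition, the surjectivity of $\mathcal{H}$ onto both families of Hom-spaces follows immediately from $\mathcal{H}(u_{\mathbf{p}})=\pi_{\mathbf{p}}$ and $\mathcal{H}(d_{\mathbf{p}})=\iota_{\mathbf{p}}$.
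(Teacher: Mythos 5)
Your proposal is correct and follows essentially the same route as the paper: both arguments reduce fullness to showing that the path-indexed projections $\pi_{\mathbf{p}}=\mathcal{H}(u_{\mathbf{p}})$ span $\operatorname{Hom}_{G}\bigl(\bigl(G^{(1)}\bigr)^{\otimes k},G^{(b)}\bigr)$, proved by induction on $k$ using the no-multiple-edges hypothesis, with the dual case handled symmetrically via $d_{\mathbf{p}}$ and $\iota_{\mathbf{p}}$. Your version is slightly more explicit than the paper's in spelling out the orthogonality and completeness relations $\pi_{\mathbf{p}}\circ\iota_{\mathbf{q}}=\delta_{\mathbf{p},\mathbf{q}}\operatorname{id}$ and $\sum_{b}\sum_{\mathbf{p}}\iota_{\mathbf{p}}\circ\pi_{\mathbf{p}}=\operatorname{id}$ before invoking Schur's lemma, but this is a refinement of the same idea rather than a different argument.
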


\begin{proof}

To prove that $ \mathcal{H}$ is full onto Hom$_{G}\left(\left(G^{(1)}\right)^{\otimes k}, G^{(b)}\right)$, it suffices to show that Hom$_{G}\left((G^{(1)})^{\otimes k},G^{(b)}\right)$ is spanned by 

\begin{equation*}
B_k^b:=\left\{\begin{aligned}
 \mathcal{H}\left(u_{\mathbf{p}}\right)=:\pi_{\mathbf{p}}\ \ \Bigg\vert \ \mathbf{p} \in P(1,b)_k
\end{aligned}\right\},
\end{equation*}

\noindent which we will show by inducting on $k$.

Since the representation graph of $G$ does not contain any multiple edges, then up to scaling $$m_{1\ b}^{\ c}:G^{(1)}\otimes G^{(b)}\longrightarrow G^{(c)}$$ is canonical for all $G^{(c)}\subset G^{(1)}\otimes G^{(b)}$.  Thus each Hom$_{G}\left(G^{(1)}\otimes G^{(b)}, G^{(c)}\right)$ is either $0$ or spanned by $m_{1\ b}^{\ c}$.  Furthermore, since $G^{(1)}$ is simple, and $$ \mathcal{H}\left(\begin{tikzpicture}[scale=0.5, baseline=10]
	\begin{pgfonlayer}{nodelayer}
		\node   (2) at (-0.5, 2) {};
		\node   (3) at (-0.5, 0) {};
		\node   (5) at (-0.5, 2.5) {$a$};
		\node   (6) at (-0.5, -0.5) {$a$};
	\end{pgfonlayer}
	\begin{pgfonlayer}{edgelayer}
		\draw (3.center) to (2.center);
	\end{pgfonlayer}
\end{tikzpicture}\right) = \operatorname{id}_{G^{(1)}},$$ the base case is trivial.

Now suppose that Hom$_{G}\left((G^{(1)})^{\otimes k},G^{(b)}\right)$ is spanned by $D_k^b$ for some $k$ and for all $b$.  Then we consider Hom$_{G}\left((G^{(1)})^{\otimes (k+1)},G^{(c)}\right)$.

Since $G^{\otimes (k+1)}=G^{(1)}\otimes (G^{(1)})^{\otimes k}$, we can construct $\pi_{\mathbf{p}}=m_{1\ b}^{\ c} \circ \left(\operatorname{id}_{G^{(1)}}\otimes \pi_{\mathbf{q}}\right)$, where $\mathbf{p}\in P(1,c)_{k+1}$ and $\mathbf{q}\in P(1,b)_k$.  So up to scaling, we have morphisms 

\[
(G^{(1)})^{\otimes (k+1)}\xrightarrow{\operatorname{id}_{G^{(1)}}\otimes \pi_{\mathbf{q}}}G^{(1)}\otimes G^{(b)}\xrightarrow{m_{1\ b}^{\ c}}G^{(c)}
\]

\noindent which are canonically based on the path in the representation graph.  Thus for each $G^{(c)}\subset (G^{(1)})^{\otimes (k+1)}$, there is a canonical projection $\pi_{\mathbf{p}}$.  Therefore, $ \mathcal{H}$ is full on Hom$_{G}\left((G^{(1)})^{\otimes k},G^{(b)}\right)$ for all $k\in\mathbb{N}$ and $b\in I$.

It is analogously shown using $d_{\mathbf{p}}$, $\iota_{\mathbf{p}}$, and $s_{\ c}^{1\ b}$ where $\mathbf{p}\in P(1,b)_k$ that $ \mathcal{H}$ is full onto Hom$_{G}\left(G^{(b)},\left(G^{(1)}\right)^{\otimes k}\right)$.

\end{proof}

As the reader will have no doubt noticed, the proof of the previous lemma marks the first place where our representation graph is required to contain no multiple edges.  In the following lemmas, we use this as well.  We construct projection and inclusion maps which, in our case, are canonical but will not be for a representation graph which contains multiple edges.  

\begin{lemma}
The functor $ \mathcal{H}$ is full onto $$\operatorname{Hom}_{G}\left(\bigotimes\limits_{i=1}^{n} G^{(a_i)}, G^{(b)}\right)\text{ and }\operatorname{Hom}_{G}\left(G^{b}, \bigotimes\limits_{i=1}^{n} G^{(a_i)}\right)$$ for $a_i,b\in I$.
\end{lemma}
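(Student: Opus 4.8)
The plan is to reduce the multi-tensor-factor case to the previously established single-object case by using the diagrammatic "un-merging" of each $G^{(a_i)}$ into strands of $G^{(1)}$'s. First I would recall that in the previous subsection we observed that for each irreducible $G$-module $G^{(a)}$ there is a minimal $k_a$ and a unique path $\mathbf{q}_a\in P(1,a)_{k_a}$ realizing $G^{(a)}\subset (G^{(1)})^{\otimes k_a}$, together with canonical maps $\pi_{\mathbf{q}_a}$ and $\iota_{\mathbf{q}_a}$ satisfying $\pi_{\mathbf{q}_a}\circ\iota_{\mathbf{q}_a}=\operatorname{id}_{G^{(a)}}$; diagrammatically these are $\mathcal{H}(u_{\mathbf{q}_a})$ and $\mathcal{H}(d_{\mathbf{q}_a})$. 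Now given any $G$-module map $f\colon \bigotimes_{i=1}^n G^{(a_i)}\to G^{(b)}$, I would form
\[
\tilde f \;:=\; f\circ\bigl(\iota_{\mathbf{q}_{a_1}}\otimes\cdots\otimes\iota_{\mathbf{q}_{a_n}}\bigr)\;\in\;\operatorname{Hom}_G\!\left(\bigl(G^{(1)}\bigr)^{\otimes(k_{a_1}+\cdots+k_{a_n})},\,G^{(b)}\right).
\]
By the preceding lemma, $\mathcal{H}$ is full onto this last Hom space, so there is a diagram $w$ in $\mathbf{Dgrams}$ with $\mathcal{H}(w)=\tilde f$.

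Next I would reconstruct $f$ itself. Since $\pi_{\mathbf{q}_{a_i}}\circ\iota_{\mathbf{q}_{a_i}}=\operatorname{id}_{G^{(a_i)}}$, we have
\[
f \;=\; f\circ\bigl(\pi_{\mathbf{q}_{a_1}}\circ\iota_{\mathbf{q}_{a_1}}\otimes\cdots\otimes\pi_{\mathbf{q}_{a_n}}\circ\iota_{\mathbf{q}_{a_n}}\bigr)\;=\;\tilde f\circ\bigl(\pi_{\mathbf{q}_{a_1}}\otimes\cdots\otimes\pi_{\mathbf{q}_{a_n}}\bigr),
\]
using that the $\iota$'s tensored together compose with the $\pi$'s tensored together to give the identity on $\bigotimes_i G^{(a_i)}$ (this is just functoriality of $\otimes$). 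Therefore, setting $d := w\circ\bigl(u_{\mathbf{q}_{a_1}}\otimes\cdots\otimes u_{\mathbf{q}_{a_n}}\bigr)$ in $\mathbf{Dgrams}$, monoidality of $\mathcal{H}$ gives
\[
\mathcal{H}(d)\;=\;\mathcal{H}(w)\circ\bigl(\mathcal{H}(u_{\mathbf{q}_{a_1}})\otimes\cdots\otimes\mathcal{H}(u_{\mathbf{q}_{a_n}})\bigr)\;=\;\tilde f\circ\bigl(\pi_{\mathbf{q}_{a_1}}\otimes\cdots\otimes\pi_{\mathbf{q}_{a_n}}\bigr)\;=\;f,
\]
so $f$ is in the image of $\mathcal{H}$, proving fullness onto $\operatorname{Hom}_G(\bigotimes_i G^{(a_i)},G^{(b)})$. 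The statement for $\operatorname{Hom}_G(G^{(b)},\bigotimes_i G^{(a_i)})$ is entirely dual: given $g\colon G^{(b)}\to\bigotimes_i G^{(a_i)}$, form $\tilde g := \bigl(\pi_{\mathbf{q}_{a_1}}\otimes\cdots\otimes\pi_{\mathbf{q}_{a_n}}\bigr)\circ g$, lift it to a diagram $w'$ by the single-object lemma (the $\operatorname{Hom}_G(G^{(b)},(G^{(1)})^{\otimes k})$ half), and set $d' := \bigl(d_{\mathbf{q}_{a_1}}\otimes\cdots\otimes d_{\mathbf{q}_{a_n}}\bigr)\circ w'$; then $\mathcal{H}(d')=g$ by the same computation with $\iota$'s in place of $\pi$'s.

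I do not expect any serious obstacle here; the content is genuinely just "pre- and post-compose with the canonical inclusions/projections to move everything into the all-$G^{(1)}$ world, apply the previous lemma, then compose back." The one point deserving a line of care is the identity $\pi_{\mathbf{q}_{a_i}}\circ\iota_{\mathbf{q}_{a_i}}=\operatorname{id}_{G^{(a_i)}}$, which holds because $G^{(a_i)}$ occurs with multiplicity exactly one in $(G^{(1)})^{\otimes k_{a_i}}$ (this is precisely where "no multiple edges in $R(V,G)$" is used, as flagged in the paragraph preceding the lemma) — so the relevant Hom spaces are one-dimensional and $\pi_{\mathbf{q}_{a_i}}\circ\iota_{\mathbf{q}_{a_i}}$, being a nonzero endomorphism of a simple module, is forced to be the identity after the normalization already fixed when $\iota_{\mathbf{q}}$ was chosen. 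Everything else is bookkeeping with the monoidal structure, which is legitimate since $\mathcal{H}$ is a monoidal functor by construction.
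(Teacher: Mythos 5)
Your proposal is essentially the paper's own proof: factor $f$ through the canonical inclusion/projection pair attached to the minimal path $\mathbf{q}_{a_i}$ for each tensor factor, so that $f=\bigl(f\circ\bigotimes_i\pi_{\mathbf{q}_{a_i}}\bigr)\circ\bigotimes_i\iota_{\mathbf{q}_{a_i}}$ reduces everything to the Hom spaces $\operatorname{Hom}_G\bigl((G^{(1)})^{\otimes k},G^{(b)}\bigr)$ and $\operatorname{Hom}_G\bigl(G^{(a_i)},(G^{(1)})^{\otimes k_{a_i}}\bigr)$ already covered by the previous lemma, then invoke monoidality of $\mathcal{H}$. The only thing to fix is a consistent swap of $\pi\leftrightarrow\iota$ (and correspondingly $u_{\mathbf{q}}\leftrightarrow d_{\mathbf{q}}$) in your displayed formulas: as written, $f\circ(\iota_{\mathbf{q}_{a_1}}\otimes\cdots\otimes\iota_{\mathbf{q}_{a_n}})$ and $w\circ(u_{\mathbf{q}_{a_1}}\otimes\cdots\otimes u_{\mathbf{q}_{a_n}})$ do not compose, since $\iota_{\mathbf{q}_{a_i}}$ lands in $(G^{(1)})^{\otimes k_{a_i}}$ rather than in $G^{(a_i)}$; the intended (and correct) versions are $\tilde f:=f\circ(\pi_{\mathbf{q}_{a_1}}\otimes\cdots\otimes\pi_{\mathbf{q}_{a_n}})$, $f=\tilde f\circ(\iota_{\mathbf{q}_{a_1}}\otimes\cdots\otimes\iota_{\mathbf{q}_{a_n}})$, and $d:=w\circ(d_{\mathbf{q}_{a_1}}\otimes\cdots\otimes d_{\mathbf{q}_{a_n}})$, after which the argument goes through exactly as you describe.
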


\begin{proof}
By the previous lemma and the fact that $ \mathcal{H}$ is a monoidal, $\CC$-linear functor, it suffices to show that any morphism in Hom$_{G}\left(\bigotimes\limits_{i=1}^{n} G^{(a_i)}, G^{(b)}\right)$ can be realized through $\CC$-linearity, composition, and tensor products of morphisms in Hom$_{G}\left(\left(G^{(1)}\right)^{\otimes k}, G^{(b)}\right)$ and Hom$_{G}\left(G^{(b)}, \left(G^{(1)}\right)^{\otimes k}\right)$.  

Let $f\in\operatorname{Hom}_{G}\left(\bigotimes\limits_{i=1}^{n} G^{(a_i)}, G^{(b)}\right)$.  For each $a_i$, there exists a minimal $k_i$ such that $G^{(a_i)}\subset (G^{(1)})^{\otimes k_i}$, and thus there are morphisms, $\pi_{k_{a_i}}^{a_i}\in$Hom$_{G}\left(\left(G^{(1)}\right)^{\otimes k_{a_i}}, G^{(a_i)}\right)$ and $\iota_{a_i}^{k_{a_i}}\in$Hom$_{G}\left(G^{(b)}, \left(G^{(1)}\right)^{\otimes k_{a_i}}\right)$ such that $\pi_{k_{a_i}}^{a_i}\circ\iota_{a_i}^{k_{a_i}}=\operatorname{id}_{G^{(a_i)}}$.  Thus, 

\[
f=f \circ \left(\bigotimes\limits_{i=1}^{n}\pi_{k_{a_i}}^{a_i}\circ\bigotimes\limits_{i=1}^{n}\iota_{a_i}^{k_{a_i}}\right)=\left(f\circ \bigotimes\limits_{i=1}^{n}\pi_{k_{a_i}}^{a_i}\right)\circ\bigotimes\limits_{i=1}^{n}\iota_{a_i}^{k_{a_i}},
\]

\noindent and since $$\left(f\circ \bigotimes\limits_{i=1}^{n}\pi_{k_{a_i}}^{a_i}\right)\in\operatorname{Hom}_{G}\left(\left(G^{(1)}\right)^{\otimes k_{a_i}}, G^{(b)}\right)\text{ and }\iota_{a_i}^{k_{a_i}}\in\operatorname{Hom}_{G}\left(G^{(a_i)}, \left(G^{(1)}\right)^{\otimes k_{a_i}}\right),$$ then $G^{\text{irr}}$ is full onto Hom$_{G}\left(\bigotimes\limits_{i=1}^{n} G^{(a_i)}, G^{(b)}\right)$ for $a_i,b\in I$.

An analogous argument shows $ \mathcal{H}$ is full onto Hom$_{G}\left(G^{b}, \bigotimes\limits_{i=1}^{n} G^{(a_i)}\right)$.
\end{proof}

\begin{lemma}
The functor $ \mathcal{H}$ is full onto Hom$_{G}\left(\left(G^{(1)}\right)^{\otimes k}, \left(G^{(1)}\right)^{\otimes \ell}\right)$ for any $k, \ell\in \mathbb{N}$.
\end{lemma}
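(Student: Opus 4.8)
The plan is to reduce the statement to the previous lemma. The key observation is that $\left(G^{(1)}\right)^{\otimes \ell}$, while not a simple object, is a tensor product of $\ell$ copies of the simple object $G^{(1)}$, and the previous lemma already tells us that $\mathcal{H}$ is full onto $\operatorname{Hom}_G\left(\bigotimes_{i=1}^{n} G^{(a_i)}, G^{(b)}\right)$ and onto $\operatorname{Hom}_G\left(G^{(b)}, \bigotimes_{i=1}^n G^{(a_i)}\right)$ for arbitrary tuples of simple modules and any simple target $G^{(b)}$. So taking all the $a_i$ equal to $1$, we already know $\mathcal{H}$ is full onto $\operatorname{Hom}_G\left(\left(G^{(1)}\right)^{\otimes k}, G^{(b)}\right)$ and onto $\operatorname{Hom}_G\left(G^{(b)}, \left(G^{(1)}\right)^{\otimes k}\right)$; the issue is only that the codomain here is a tensor product of simples rather than a single simple.

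First I would decompose $\left(G^{(1)}\right)^{\otimes \ell}$ into its isotypic/simple pieces. Since $G\text{-}\textbf{mod}$ is semisimple, we may write $\left(G^{(1)}\right)^{\otimes \ell} \cong \bigoplus_{\mathbf{q}} G^{(b_{\mathbf{q}})}$ where the sum runs over paths $\mathbf{q} \in P(1, b)_\ell$ for $b \in I_G$, with the inclusion $\iota_{\mathbf{q}} \colon G^{(b)} \hookrightarrow \left(G^{(1)}\right)^{\otimes \ell}$ and projection $\pi_{\mathbf{q}} \colon \left(G^{(1)}\right)^{\otimes \ell} \twoheadrightarrow G^{(b)}$ defined in Section~\ref{sec:Gmodirr}, satisfying $\pi_{\mathbf{q}} \circ \iota_{\mathbf{q}} = \operatorname{id}_{G^{(b)}}$ and $\sum_{\mathbf{q}} \iota_{\mathbf{q}} \circ \pi_{\mathbf{q}} = \operatorname{id}_{\left(G^{(1)}\right)^{\otimes \ell}}$ (the latter being the representation-theoretic shadow of the lemma $\sum_{b}\sum_{\mathbf{p} \in P(1,b)_\ell} d_{\mathbf{p}} \circ u_{\mathbf{p}} = \operatorname{id}_{1^{\otimes \ell}}$ in $\mathbf{Dgrams}$, via $\mathcal{H}(u_{\mathbf{q}}) = \pi_{\mathbf{q}}$, $\mathcal{H}(d_{\mathbf{q}}) = \iota_{\mathbf{q}}$). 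Now given any $f \in \operatorname{Hom}_G\left(\left(G^{(1)}\right)^{\otimes k}, \left(G^{(1)}\right)^{\otimes \ell}\right)$, write
\begin{equation*}
f = \operatorname{id}_{\left(G^{(1)}\right)^{\otimes \ell}} \circ f = \sum_{\mathbf{q}} \iota_{\mathbf{q}} \circ \left(\pi_{\mathbf{q}} \circ f\right),
\end{equation*}
where each $\pi_{\mathbf{q}} \circ f \in \operatorname{Hom}_G\left(\left(G^{(1)}\right)^{\otimes k}, G^{(b_{\mathbf{q}})}\right)$.

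By the previous lemma, each $\pi_{\mathbf{q}} \circ f$ lies in the image of $\mathcal{H}$, say $\pi_{\mathbf{q}} \circ f = \mathcal{H}(c_{\mathbf{q}})$ for some morphism $c_{\mathbf{q}}$ in $\mathbf{Dgrams}$ from $1^{\otimes k}$ to $b_{\mathbf{q}}$; and $\iota_{\mathbf{q}} = \mathcal{H}(d_{\mathbf{q}})$ by definition of $\mathcal{H}$ on the generating diagrams (composed appropriately to build $d_{\mathbf{q}}$). Since $\mathcal{H}$ is a $\CC$-linear monoidal functor, it respects composition and $\CC$-linear combinations, so
\begin{equation*}
f = \sum_{\mathbf{q}} \mathcal{H}(d_{\mathbf{q}}) \circ \mathcal{H}(c_{\mathbf{q}}) = \mathcal{H}\!\left(\sum_{\mathbf{q}} d_{\mathbf{q}} \circ c_{\mathbf{q}}\right),
\end{equation*}
exhibiting $f$ in the image of $\mathcal{H}$. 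This completes the argument. I do not anticipate a serious obstacle here: the only thing to be careful about is bookkeeping — making sure the finitely many paths $\mathbf{q} \in \bigsqcup_b P(1,b)_\ell$ genuinely index the simple summands of $\left(G^{(1)}\right)^{\otimes \ell}$ (this is exactly what "no multiple edges" in $R(V,G)$ buys us, since it forces each $G^{(b)}$ to appear with multiplicity equal to $\#P(1,b)_\ell$ and the $\iota_{\mathbf{q}}, \pi_{\mathbf{q}}$ to be canonically defined), and confirming that the resolution of the identity $\sum_{\mathbf{q}} \iota_{\mathbf{q}} \circ \pi_{\mathbf{q}} = \operatorname{id}$ holds — which is immediate from applying $\mathcal{H}$ to the preceding lemma.
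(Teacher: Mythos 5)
Your proof is correct and takes essentially the same approach as the paper: insert the resolution of the identity $\sum_{b}\sum_{\mathbf{q}\in P(1,b)_{\ell}}\iota_{\mathbf{q}}\circ\pi_{\mathbf{q}}=\operatorname{id}_{(G^{(1)})^{\otimes \ell}}$ coming from the path projections/inclusions, and reduce to the previous lemma giving fullness onto $\operatorname{Hom}_{G}\left(\left(G^{(1)}\right)^{\otimes k}, G^{(b)}\right)$. The only (harmless) difference is that the paper inserts the resolution of the identity on both the domain and the codomain, whereas you observe correctly that it is only needed on the codomain, which makes your version slightly leaner.
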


\begin{proof}
Given a morphism, $f\in \text{Hom}_{G}\left(\left(G^{(1)}\right)^{\otimes k}, \left(G^{(1)}\right)^{\otimes \ell}\right)$ and a path $\mathbf{p}\in P(1,b)_k$, we have in the image of $ \mathcal{H}$ canonical projections, $\pi_{\mathbf{p}}$, and inclusions, $\iota_{\mathbf{p}}$, onto and from $G^{(b)}$ such that $\pi_{\mathbf{p}}\circ \iota_{\mathbf{p}}=\operatorname{id}_{G^{(b)}}$ and $\sum\limits_{\mathbf{p}\in P(1,b)_k}\iota_{\mathbf{p}}\circ \pi_{\mathbf{p}}=\operatorname{id}_{\left(G^{\otimes k}\right)}$.  Thus, we have

\begin{align*}
f&=\left(\sum\limits_{\mathbf{p}\in P(1,b)_{\ell}}\iota_{\mathbf{p}}\circ \pi_{\mathbf{p}}\right) \circ f \circ \left(\sum\limits_{\mathbf{q}\in P(1,b)_k}\iota_{\mathbf{q}}\circ \pi_{\mathbf{q}}\right)\\
&=\left(\sum\limits_{\mathbf{p}\in P(1,b)_{\ell}}\sum\limits_{\mathbf{q}\in P(1,b)_k}\iota_{\mathbf{p}}\circ \left(\pi_{\mathbf{p}} \circ f \circ \iota_{\mathbf{q}} \circ \pi_{\mathbf{q}}\right)\right)
\end{align*}

\noindent where the sums are taken over all paths of length $k$ and $\ell$ from $1$ to $b$.  Since $\pi_{\mathbf{p}} \circ f \circ \iota_{\mathbf{q}} \circ \pi_{\mathbf{q}}\in\text{Hom}_{G}\left(\left(G^{(1)}\right)^{\otimes k},G^{(b)}\right)$ and $\iota_{\mathbf{p}}\in\text{Hom}_{G}\left(G^{(b)},\left(G^{(1)}\right)^{\otimes \ell}\right)$, $ \mathcal{H}$ is full onto Hom$_{G}\left(\left(G^{(1)}\right)^{\otimes k}, \left(G^{(1)}\right)^{\otimes \ell}\right)$ for any $k, \ell\in \mathbb{N}$.
\end{proof}

\begin{theorem}
\label{thm:FGirrFull}
The functor $ \mathcal{H}$ is full.
\end{theorem}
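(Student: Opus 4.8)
The plan is to deduce fullness on an arbitrary Hom-space from the three fullness lemmas already established, by sandwiching a morphism between canonical inclusions and projections that factor through pure tensor powers of $G^{(1)}$. Fix objects $\bigotimes_{i=1}^{n} G^{(a_i)}$ and $\bigotimes_{j=1}^{m} G^{(b_j)}$ and an arbitrary $f\in\operatorname{Hom}_{G}\!\left(\bigotimes_{i=1}^{n} G^{(a_i)},\bigotimes_{j=1}^{m} G^{(b_j)}\right)$. For each $a_i$ let $\mathbf{q}_i\in P(1,a_i)_{k_i}$ be the unique minimal path realizing $G^{(a_i)}\subset (G^{(1)})^{\otimes k_i}$, and recall the associated maps $\pi_{\mathbf{q}_i}$, $\iota_{\mathbf{q}_i}$ with $\pi_{\mathbf{q}_i}\circ\iota_{\mathbf{q}_i}=\operatorname{id}_{G^{(a_i)}}$; similarly fix minimal paths $\mathbf{p}_j\in P(1,b_j)_{\ell_j}$ with maps $\pi_{\mathbf{p}_j},\iota_{\mathbf{p}_j}$. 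Since $\otimes$ is a bifunctor in the strict monoidal category $G\text{-}\textbf{mod}_{\text{irr}}$, tensoring these identities gives $\left(\bigotimes_{j}\pi_{\mathbf{p}_j}\right)\circ\left(\bigotimes_{j}\iota_{\mathbf{p}_j}\right)=\operatorname{id}$ and $\left(\bigotimes_{i}\pi_{\mathbf{q}_i}\right)\circ\left(\bigotimes_{i}\iota_{\mathbf{q}_i}\right)=\operatorname{id}$, whence
\[
f=\left(\bigotimes_{j=1}^{m}\pi_{\mathbf{p}_j}\right)\circ g\circ\left(\bigotimes_{i=1}^{n}\iota_{\mathbf{q}_i}\right),\qquad g:=\left(\bigotimes_{j=1}^{m}\iota_{\mathbf{p}_j}\right)\circ f\circ\left(\bigotimes_{i=1}^{n}\pi_{\mathbf{q}_i}\right),
\]
and $g\in\operatorname{Hom}_{G}\!\left((G^{(1)})^{\otimes(k_1+\cdots+k_n)},(G^{(1)})^{\otimes(\ell_1+\cdots+\ell_m)}\right)$.

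Next I would invoke the lemmas. By the lemma establishing fullness onto $\operatorname{Hom}_{G}\!\left((G^{(1)})^{\otimes k},(G^{(1)})^{\otimes\ell}\right)$ there is a diagram $d\in\mathbf{Dgrams}$ with $\mathcal{H}(d)=g$. By the construction of $\mathcal{H}$ on the generators together with its monoidality, $\mathcal{H}(u_{\mathbf{p}})=\pi_{\mathbf{p}}$ and $\mathcal{H}(d_{\mathbf{p}})=\iota_{\mathbf{p}}$ for each path $\mathbf{p}$, so $\mathcal{H}\!\left(\bigotimes_{j}u_{\mathbf{p}_j}\right)=\bigotimes_{j}\pi_{\mathbf{p}_j}$ and $\mathcal{H}\!\left(\bigotimes_{i}d_{\mathbf{q}_i}\right)=\bigotimes_{i}\iota_{\mathbf{q}_i}$. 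Since $\mathcal{H}$ preserves composition,
\[
f=\mathcal{H}\!\left(\left(\bigotimes_{j=1}^{m}u_{\mathbf{p}_j}\right)\circ d\circ\left(\bigotimes_{i=1}^{n}d_{\mathbf{q}_i}\right)\right),
\]
so $f$ lies in the image of $\mathcal{H}$. As $f$ was arbitrary, this proves $\mathcal{H}$ is full.

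Essentially all the work has already been carried out in the preceding lemmas, so I do not expect a genuine obstacle here; the argument is pure assembly. The only point requiring care is the bookkeeping: one must check that the diagrammatic maps $u_{\mathbf{q}_i},d_{\mathbf{q}_i},u_{\mathbf{p}_j},d_{\mathbf{p}_j}$ are sent by $\mathcal{H}$ to exactly the abstract canonical maps used to split $f$, and that the iterated composite collapses back to $f$. Both are immediate from the no-multiple-edges hypothesis, which makes each relevant Hom-space at most one-dimensional, so that "canonical up to scalar" is unambiguous once the generators $m_{1\ a}^{\ b}$ and $s_{\ b}^{1\ a}$ are fixed, and from the fact that $\mathcal{H}$ is defined on $u_{\mathbf{p}},d_{\mathbf{p}}$ precisely so that $\mathcal{H}(u_{\mathbf{p}})=\pi_{\mathbf{p}}$ and $\mathcal{H}(d_{\mathbf{p}})=\iota_{\mathbf{p}}$.
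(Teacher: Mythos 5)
Your proposal is correct and follows essentially the same route as the paper: sandwich $f$ between tensor products of the canonical (minimal-path) inclusions and projections, observe that the middle factor lands in $\operatorname{Hom}_{G}\bigl((G^{(1)})^{\otimes k},(G^{(1)})^{\otimes \ell}\bigr)$ where fullness is already known, and note that the outer factors are images of the diagrams $u_{\mathbf{p}}$ and $d_{\mathbf{p}}$ under $\mathcal{H}$. The only cosmetic difference is that you name the minimal paths explicitly, whereas the paper writes the same maps as $\pi_{k_{a_i}}^{a_i}$ and $\iota_{a_i}^{k_{a_i}}$.
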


\begin{proof}
Consider a morphism $f\in \text{Hom}_{G}\left(\bigotimes\limits_{i=1}^{n}G^{(a_i)}, \bigotimes\limits_{j=1}^{m}G^{(b_j)}\right)$.  Using notation from the proofs above, we have 

\begin{align*}
f&=\bigotimes\limits_{j=1}^{m}\left(\pi_{\ell_{b_j}}^{b_j}\circ \iota_{b_j}^{\ell_{b_j}}\right)\circ f \circ \bigotimes\limits_{i=1}^{n}\left(\pi_{k_{a_i}}^{a_i}\circ \iota_{a_i}^{k_{a_i}}\right)\\
&=\bigotimes\limits_{j=1}^{m}\pi_{\ell_{b_j}}^{b_j}\circ \left(\bigotimes\limits_{j=1}^{m}\iota_{b_j}^{\ell_{b_j}} \circ f \circ \bigotimes\limits_{i=1}^{n} \pi_{k_{a_i}}^{a_i}\right) \circ \bigotimes\limits_{i=1}^{n} \iota_{a_i}^{k_{a_i}},
\end{align*}  

\noindent and by setting $k=\sum\limits_{i=1}^{n}k_{a_i}$ and $\ell=\sum\limits_{j=1}^{m}\ell_{b_j}$, we have that $$\left(\bigotimes\limits_{j=1}^{m}\iota_{b_j}^{\ell_{b_j}} \circ f \circ \bigotimes\limits_{i=1}^{n} \pi_{k_{a_i}}^{a_i}\right)\in\text{Hom}_{G}\left(\left(G^{(1)}\right)^{\otimes k},\left(G^{(1)}\right)^{\otimes \ell}\right),$$ $$\pi_{\ell_{b_j}}^{b_j}\in\text{Hom}_{G}\left(\left(G^{(1)}\right)^{\otimes \ell_{b_j}},G^{(b_j)}\right),$$ and $$\iota_{a_i}^{k_{a_i}}\in\text{Hom}_{G}\left(G^{(a_i)},\left(G^{(1)}\right)^{\otimes k_{a_i}}\right).$$  Therefore the functor $ \mathcal{H}$ is full.
\end{proof}


\subsection{The Induced Functor \texorpdfstring{$\overline{\mathcal{H}}_{R(V,G)}$}{H-RVG}}


We now explore the kernel of $ \mathcal{H}$.  Assume $\mathcal{I}$ is a tensor ideal of $\mathbf{Dgrams}$ such that for all objects $X,Y$ in $\mathbf{Dgrams}$, $\mathcal{H}(f)=0$ for every morphism $f\in \mathcal{I}\left(X,Y\right)$.  Let $\overline{\mathbf{Dgrams}}:=\bigslant{\mathbf{Dgrams}}{\mathcal{I}}$.  Then there is an induced functor $$\overline{\mathcal{H}}:\overline{\mathbf{Dgrams}}\longrightarrow G\text{-}\textbf{mod}.$$ Let us assume that for any $a, b \in I_G$ $$\operatorname{Hom}_{\overline{\mathbf{Dgrams}}}\left(a,b\right)=\begin{cases}
 \CC\cdot \operatorname{id}_a & a=b\\
0 & a\neq b
\end{cases}.$$  That is, in $\overline{\mathbf{Dgrams}}$ we have for all $a,b \in I_G$

\begin{equation}
\label{eqn:CatSchur}
\begin{tikzpicture}[scale=0.5, baseline=-2]
	\begin{pgfonlayer}{nodelayer}
		\node   (0) at (-1, 1) {};
		\node   (1) at (1, 1) {};
		\node   (2) at (-1, -1) {};
		\node   (3) at (1, -1) {};
		\node   (4) at (0, -1) {};
		\node   (5) at (0, -2) {};
		\node   (6) at (0, 1) {};
		\node   (7) at (0, 2) {};
		\node   (9) at (4.5, 1) {};
		\node   (10) at (4.5, -1) {};
		\node   (11) at (1.5, 0) {$=$};
		\node   (12) at (3.25, 0) {$\delta_{a,b}\alpha_{d}$};
		\node   (13) at (0, 0) {$d$};
		\node   (14) at (0, -2.5) {$a$};
		\node   (15) at (0, 2.5) {$b$};
		\node   (16) at (4.5, 1.5) {$a$};
		\node   (17) at (4.5, -1.5) {$a$};
	\end{pgfonlayer}
	\begin{pgfonlayer}{edgelayer}
		\draw (0.center) to (2.center);
		\draw (2.center) to (3.center);
		\draw (3.center) to (1.center);
		\draw (1.center) to (0.center);
		\draw (6.center) to (7.center);
		\draw (4.center) to (5.center);
		\draw (9.center) to (10.center);
	\end{pgfonlayer}
\end{tikzpicture}
\end{equation}

\noindent where $\delta_{a, b}$ is the Kronecker delta and $\alpha_{d}\in\CC$.

\begin{lemma}
Suppose the equality in \ref{eqn:CatSchur} is satisfied.  The functor $ \overline{\mathcal{H}}$ is faithful on Hom$_{\overline{\mathbf{Dgrams}}}\left(1^{\otimes k}, b\right)$ and Hom$_{\overline{\mathbf{Dgrams}}}\left(b, 1^{\otimes k}\right)$ for all $b\in I_{G}$ and $k\in \mathbb{N}$.
\end{lemma}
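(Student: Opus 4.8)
The plan is to pin down $\overline{\mathcal{H}}$ on these Hom spaces by a dimension count: on the diagrammatic side I would exhibit a spanning set of size at most $\lvert P(1,b)_k\rvert$, and on the representation side show the target has dimension exactly $\lvert P(1,b)_k\rvert$ with the images of that spanning set linearly independent; surjectivity of $\overline{\mathcal{H}}$ (already established) then forces a bijection, hence injectivity, on each Hom space.

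First I would record the target dimension. By Schur's lemma $\dim\operatorname{Hom}_G\left(\left(G^{(1)}\right)^{\otimes k},G^{(b)}\right)$ is the multiplicity of $G^{(b)}$ in $\left(G^{(1)}\right)^{\otimes k}$. Writing $\left(G^{(1)}\right)^{\otimes k}=G^{(1)}\otimes\left(G^{(1)}\right)^{\otimes(k-1)}$ and using that $R(V,G)$ has no multiple edges, so $G^{(b)}$ occurs in $G^{(1)}\otimes G^{(c)}$ with multiplicity $1$ if $c\rightarrow b$ and $0$ otherwise, a short induction identifies this multiplicity with $\lvert P(1,b)_k\rvert$. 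Since the earlier lemma shows $\{\pi_{\mathbf{p}}:\mathbf{p}\in P(1,b)_k\}$ spans this Hom space and there are exactly $\lvert P(1,b)_k\rvert$ of these maps, they form a basis; in particular they are linearly independent, and likewise for the inclusions $\{\iota_{\mathbf{p}}\}$ in $\operatorname{Hom}_G\left(G^{(b)},\left(G^{(1)}\right)^{\otimes k}\right)$.

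The key step is to show $\operatorname{Hom}_{\overline{\mathbf{Dgrams}}}(1^{\otimes k},b)$ is spanned by $\{u_{\mathbf{p}}:\mathbf{p}\in P(1,b)_k\}$ (one may assume $k\geq 2$; for $k\leq 1$ the claim is immediate from (\ref{eqn:CatSchur}) and Schur's lemma). Given $d\in\operatorname{Hom}_{\overline{\mathbf{Dgrams}}}(1^{\otimes k},b)$, the identity
\[
\sum_{c\in I_{G}}\ \sum_{\mathbf{q}\in P(1,c)_k} d_{\mathbf{q}}\circ u_{\mathbf{q}}=\operatorname{id}_{1^{\otimes k}},
\]
which holds in $\overline{\mathbf{Dgrams}}$ because it holds in $\mathbf{Dgrams}$, gives $d=\sum_{c,\mathbf{q}}(d\circ d_{\mathbf{q}})\circ u_{\mathbf{q}}$. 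Each $d\circ d_{\mathbf{q}}$ lies in $\operatorname{Hom}_{\overline{\mathbf{Dgrams}}}(c,b)$, which by the standing hypothesis is $0$ unless $c=b$ and is $\CC\cdot\operatorname{id}_b$ when $c=b$; so only the terms with $\mathbf{q}\in P(1,b)_k$ survive and $d$ is a linear combination of the $u_{\mathbf{q}}$. A mirror-image argument, composing $\operatorname{id}_{1^{\otimes k}}$ on the left of $d$ and using that $u_{\mathbf{q}}\circ d\in\operatorname{Hom}_{\overline{\mathbf{Dgrams}}}(b,c)$, shows $\operatorname{Hom}_{\overline{\mathbf{Dgrams}}}(b,1^{\otimes k})$ is spanned by $\{d_{\mathbf{p}}:\mathbf{p}\in P(1,b)_k\}$. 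Hence both diagrammatic Hom spaces have dimension at most $\lvert P(1,b)_k\rvert$.

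Finally I would conclude by linear algebra: $\overline{\mathcal{H}}$ maps $\operatorname{Hom}_{\overline{\mathbf{Dgrams}}}(1^{\otimes k},b)$, of dimension at most $\lvert P(1,b)_k\rvert$, onto $\operatorname{Hom}_G\left(\left(G^{(1)}\right)^{\otimes k},G^{(b)}\right)$, of dimension exactly $\lvert P(1,b)_k\rvert$ (surjectivity being the fullness of $\mathcal{H}$), carrying $u_{\mathbf{p}}\mapsto\pi_{\mathbf{p}}$; a surjection between vector spaces of these dimensions is an isomorphism, so $\overline{\mathcal{H}}$ is injective on this Hom space, and the identical argument with $d_{\mathbf{p}}\mapsto\iota_{\mathbf{p}}$ handles $\operatorname{Hom}_{\overline{\mathbf{Dgrams}}}(b,1^{\otimes k})$. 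The only delicate point is the spanning step, but it is dispatched by the resolution-of-the-identity lemma together with (\ref{eqn:CatSchur}); the rest is the elementary multiplicity computation, which is precisely where the no-multiple-edges hypothesis is used.
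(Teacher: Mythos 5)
Your proposal is correct, and it reaches the key spanning statement by a noticeably different route than the paper. The paper proves that every diagram in $\operatorname{Hom}_{\overline{\mathbf{Dgrams}}}(1^{\otimes k}\otimes a,b)$ is a combination of the $u_{\mathbf{p}}$ by a fresh induction on $k$: it isolates the rightmost strands of an arbitrary diagram $D$, inserts the local relation $\sum_{c\rightarrow a}(\text{split}\circ\text{merge})=\operatorname{id}$ once to produce diagrams $D_c$ with one fewer strand, and only invokes (\ref{eqn:CatSchur}) in the base cases $k=0,1$. You instead apply the already-proved global resolution of the identity $\sum_{c}\sum_{\mathbf{q}\in P(1,c)_k}d_{\mathbf{q}}\circ u_{\mathbf{q}}=\operatorname{id}_{1^{\otimes k}}$ in a single stroke and then use (\ref{eqn:CatSchur}) to collapse each $d\circ d_{\mathbf{q}}\in\operatorname{Hom}_{\overline{\mathbf{Dgrams}}}(c,b)$ to $\delta_{c,b}$ times a scalar multiple of $\operatorname{id}_b$; since that lemma was itself proved by induction from the same local relation, the total work is comparable, but your packaging avoids repeating the induction and is cleaner. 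You also supply a justification the paper only asserts: the linear independence of the $\pi_{\mathbf{p}}$, which you derive from the multiplicity count $\dim\operatorname{Hom}_G\bigl((G^{(1)})^{\otimes k},G^{(b)}\bigr)=\lvert P(1,b)_k\rvert$ using the no-multiple-edges hypothesis, whereas the paper's earlier fullness lemma gives only spanning. Your concluding surjection-plus-dimension-count step is logically equivalent to the paper's direct kernel argument (write an element of the kernel as $\sum\alpha_{\mathbf{p}}u_{\mathbf{p}}$ and use independence of the $\pi_{\mathbf{p}}$). The only loose end, shared with the paper, is the degenerate case $k=0$, where the source is the unit object rather than an object indexed by $I_G$, so (\ref{eqn:CatSchur}) does not literally apply; this is a convention issue rather than a substantive gap.
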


\begin{proof} Recall that $ \mathcal{H}$ is full, the set $\left\{ \pi_{^1p^b_k} | ^1p^b_k\in ^1P^b_k\right\}$ forms a basis for $$\operatorname{Hom}_{G}\left(\left(G^{(1)}\right)^{\otimes k}, \left(G^{(1)}\right)^{\otimes \ell}\right),$$ and $ \mathcal{H}\left(u_{^1p^b_k}\right)=\pi_{^1p^b_k}$.  Thus, the $u_{^1p^b_k}$ are linearly independent.

It then suffices to show that any diagram in Hom$_{\overline{\mathbf{Dgrams}}}\left(1^{\otimes k},b\right)$ can be written as a linear combination of the diagrams $u_{\mathbf{p}}$ where $\mathbf{p}\in P(1,b)_k$.  It will be convenient to instead show the following equality:

\begin{center}
\begin{tikzpicture}[scale=0.5, baseline=-10]
	\begin{pgfonlayer}{nodelayer}
		\node   (0) at (-1, 1) {};
		\node   (1) at (-1, -1) {};
		\node   (2) at (2, -1) {};
		\node   (3) at (2, 1) {};
		\node   (4) at (0.5, 1) {};
		\node   (5) at (0.5, 2) {};
		\node   (6) at (-0.75, -1) {};
		\node   (7) at (-0.25, -1) {};
		\node   (8) at (-0.75, -2) {};
		\node   (9) at (-0.25, -2) {};
		\node   (10) at (0.5, -1.5) {$\cdots$};
		\node   (11) at (1.25, -1) {};
		\node   (12) at (1.75, -1) {};
		\node   (13) at (1.25, -2) {};
		\node   (14) at (1.75, -2) {};
		\node   (15) at (-0.75, -2.5) {$1$};
		\node   (16) at (-0.25, -2.5) {$1$};
		\node   (17) at (1.25, -2.5) {$1$};
		\node   (18) at (1.75, -2.5) {$a$};
		\node   (19) at (0.5, 2.5) {$b$};
		\node   (20) at (0.5, 0) {$D$};
	\end{pgfonlayer}
	\begin{pgfonlayer}{edgelayer}
		\draw (0.center) to (1.center);
		\draw (1.center) to (2.center);
		\draw (2.center) to (3.center);
		\draw (3.center) to (0.center);
		\draw (6.center) to (8.center);
		\draw (7.center) to (9.center);
		\draw (11.center) to (13.center);
		\draw (12.center) to (14.center);
		\draw (5.center) to (4.center);
	\end{pgfonlayer}
\end{tikzpicture} $=\sum\limits_{\mathbf{p}\in P(a,b)_k}\alpha_{\mathbf{p}}u_{\mathbf{p}}$
\end{center}

\noindent where $D$ is a diagram in $\operatorname{Hom}_{\overline{\mathbf{Dgrams}}}\left(1^{\otimes k}\otimes a,b\right)$.

We induct on $k$.  For $k=0$, an immediate consequence of the relation (\ref{eqn:CatSchur}) is that any diagram is either the identity on $a$, or it is $0$.  For $k=1$, the second relation in (\ref{eqn:DgramPop}) results in the following:

\begin{center}
\begin{tikzpicture}[scale=0.5, baseline=0]
	\begin{pgfonlayer}{nodelayer}
		\node   (0) at (0, 1) {};
		\node   (1) at (0, -1) {};
		\node   (2) at (2, -1) {};
		\node   (3) at (2, 1) {};
		\node   (4) at (1, 1) {};
		\node   (5) at (1, 2) {};
		\node   (6) at (0.5, -1) {};
		\node   (8) at (0.5, -2) {};
		\node   (12) at (1.5, -1) {};
		\node   (14) at (1.5, -2) {};
		\node   (15) at (0.5, -2.5) {$1$};
		\node   (18) at (1.5, -2.5) {$1$};
		\node   (19) at (1, 2.5) {$b$};
		\node   (20) at (1, 0) {$D$};
		\node   (21) at (3, 0) {$=$};
		\node   (22) at (7, 2.5) {};
		\node   (23) at (7, 0.5) {};
		\node   (24) at (9, 0.5) {};
		\node   (25) at (9, 2.5) {};
		\node   (26) at (8, 2.5) {};
		\node   (27) at (8, 3.5) {};
		\node   (28) at (7.5, 0.5) {};
		\node   (29) at (7.5, -0.5) {};
		\node   (30) at (8.5, 0.5) {};
		\node   (31) at (8.5, -0.5) {};
		\node   (32) at (7.5, -4) {$1$};
		\node   (33) at (8.5, -4) {$a$};
		\node   (34) at (8, 4) {$b$};
		\node   (35) at (8, 1.5) {$D$};
		\node   (51) at (8, -1.5) {};
		\node   (52) at (8, -2.5) {};
		\node   (53) at (7.5, -3.5) {};
		\node   (54) at (8.5, -3.5) {};
		\node   (57) at (8.5, -2) {$c$};
		\node   (58) at (5, 0) {$\mathlarger{\mathlarger{\sum}}\limits_{c\rightarrow a}$};
	\end{pgfonlayer}
	\begin{pgfonlayer}{edgelayer}
		\draw (0.center) to (1.center);
		\draw (1.center) to (2.center);
		\draw (2.center) to (3.center);
		\draw (3.center) to (0.center);
		\draw (6.center) to (8.center);
		\draw (12.center) to (14.center);
		\draw (5.center) to (4.center);
		\draw (22.center) to (23.center);
		\draw (23.center) to (24.center);
		\draw (24.center) to (25.center);
		\draw (25.center) to (22.center);
		\draw (28.center) to (29.center);
		\draw (30.center) to (31.center);
		\draw (27.center) to (26.center);
		\draw [bend right=90, looseness=3.50] (29.center) to (31.center);
		\draw (51.center) to (52.center);
		\draw [bend left=90, looseness=3.25] (53.center) to (54.center);
	\end{pgfonlayer}
\end{tikzpicture} $=\sum\limits_{c\rightarrow a}\delta_{c,b}\alpha_{D,c} $\begin{tikzpicture}[scale=0.5, baseline=10]
	\begin{pgfonlayer}{nodelayer}
		\node   (0) at (0, 0) {};
		\node   (1) at (1, 0) {};
		\node   (2) at (0.5, 1) {};
		\node   (3) at (0.5, 2) {};
		\node   (4) at (0, -0.5) {$1$};
		\node   (5) at (1, -0.5) {$a$};
		\node   (6) at (0.5, 2.5) {$c$};
	\end{pgfonlayer}
	\begin{pgfonlayer}{edgelayer}
		\draw [bend left=90, looseness=3.50] (0.center) to (1.center);
		\draw (2.center) to (3.center);
	\end{pgfonlayer}
\end{tikzpicture},
\end{center}

\noindent which was to be shown.

Now, suppose that any diagram \noindent in Hom$_{\overline{\mathbf{Dgrams}}}\left(1^{\otimes k}\otimes a,b\right)$ can be written as a linear combination of the diagrams $u_{\mathbf{p}}$ where $\mathbf{p}\in P(a,b)_k$, and let the following diagram be a diagram in Hom$_{\overline{\mathbf{Dgrams}}}\left(1^{\otimes (k+1)}\otimes a,b\right)$ for some $b\in I_{G}$.  Using the second relation in (\ref{eqn:DgramPop}), we get that

\begin{center}
\begin{tikzpicture}[scale=0.5]
	\begin{pgfonlayer}{nodelayer}
		\node   (0) at (-1, 1) {};
		\node   (1) at (-1, -1) {};
		\node   (2) at (2, -1) {};
		\node   (3) at (2, 1) {};
		\node   (4) at (0.5, 1) {};
		\node   (5) at (0.5, 2) {};
		\node   (6) at (-0.75, -1) {};
		\node   (7) at (-0.25, -1) {};
		\node   (8) at (-0.75, -2) {};
		\node   (9) at (-0.25, -2) {};
		\node   (10) at (0.5, -1.5) {$\cdots$};
		\node   (11) at (1.25, -1) {};
		\node   (12) at (1.75, -1) {};
		\node   (13) at (1.25, -2) {};
		\node   (14) at (1.75, -2) {};
		\node   (15) at (-0.75, -2.5) {$1$};
		\node   (16) at (-0.25, -2.5) {$1$};
		\node   (17) at (1.25, -2.5) {$1$};
		\node   (18) at (1.75, -2.5) {$a$};
		\node   (19) at (0.5, 2.5) {$b$};
		\node   (20) at (0.5, 0) {$D$};
		\node   (21) at (3, 0) {$=$};
		\node   (22) at (7, 2.5) {};
		\node   (23) at (7, 0.5) {};
		\node   (24) at (10, 0.5) {};
		\node   (25) at (10, 2.5) {};
		\node   (26) at (8.5, 2.5) {};
		\node   (27) at (8.5, 3.5) {};
		\node   (28) at (7.25, 0.5) {};
		\node   (29) at (7.75, 0.5) {};
		\node   (30) at (7.25, -3.5) {};
		\node   (31) at (7.75, -3.5) {};
		\node   (32) at (8.5, -2) {$\cdots$};
		\node   (33) at (9.25, 0.5) {};
		\node   (34) at (9.75, 0.5) {};
		\node   (35) at (9.25, -0.5) {};
		\node   (36) at (9.75, -0.5) {};
		\node   (37) at (7.25, -4) {$1$};
		\node   (38) at (7.75, -4) {$1$};
		\node   (39) at (9.25, -4) {$1$};
		\node   (40) at (9.75, -4) {$a$};
		\node   (41) at (8.5, 4) {$b$};
		\node   (42) at (8.5, 1.5) {$D$};
		\node   (43) at (5, 0) {$\sum\limits_{c-a}$};
		\node   (65) at (9.5, -1.5) {};
		\node   (66) at (9.5, -2.5) {};
		\node   (67) at (9.25, -3.5) {};
		\node   (68) at (9.75, -3.5) {};
		\node   (75) at (10, -2) {$c$};
	\end{pgfonlayer}
	\begin{pgfonlayer}{edgelayer}
		\draw (0.center) to (1.center);
		\draw (1.center) to (2.center);
		\draw (2.center) to (3.center);
		\draw (3.center) to (0.center);
		\draw (6.center) to (8.center);
		\draw (7.center) to (9.center);
		\draw (11.center) to (13.center);
		\draw (12.center) to (14.center);
		\draw (5.center) to (4.center);
		\draw (22.center) to (23.center);
		\draw (23.center) to (24.center);
		\draw (24.center) to (25.center);
		\draw (25.center) to (22.center);
		\draw (28.center) to (30.center);
		\draw (29.center) to (31.center);
		\draw (33.center) to (35.center);
		\draw (34.center) to (36.center);
		\draw (27.center) to (26.center);
		\draw [bend right=90, looseness=6.75] (35.center) to (36.center);
		\draw (65.center) to (66.center);
		\draw [bend left=90, looseness=6.75] (67.center) to (68.center);
	\end{pgfonlayer}
\end{tikzpicture}
\end{center}

Now we set 

\begin{center}
\begin{tikzpicture}[scale=0.5]
	\begin{pgfonlayer}{nodelayer}
		\node   (0) at (-1, 1) {};
		\node   (1) at (-1, -1) {};
		\node   (2) at (2, -1) {};
		\node   (3) at (2, 1) {};
		\node   (4) at (0.5, 1) {};
		\node   (5) at (0.5, 2) {};
		\node   (6) at (-0.75, -1) {};
		\node   (7) at (-0.25, -1) {};
		\node   (8) at (-0.75, -2) {};
		\node   (9) at (-0.25, -2) {};
		\node   (10) at (0.5, -1.5) {$\cdots$};
		\node   (11) at (1.25, -1) {};
		\node   (12) at (1.75, -1) {};
		\node   (13) at (1.25, -2) {};
		\node   (14) at (1.75, -2) {};
		\node   (15) at (-0.75, -2.5) {$1$};
		\node   (16) at (-0.25, -2.5) {$1$};
		\node   (17) at (1.25, -2.5) {$1$};
		\node   (18) at (1.75, -2.5) {$c$};
		\node   (19) at (0.5, 2.5) {$b$};
		\node   (20) at (0.5, 0) {$D_c$};
		\node   (21) at (3, 0) {$=$};
		\node   (22) at (4, 1) {};
		\node   (23) at (4, -1) {};
		\node   (24) at (7, -1) {};
		\node   (25) at (7, 1) {};
		\node   (26) at (5.5, 1) {};
		\node   (27) at (5.5, 2) {};
		\node   (28) at (4.25, -1) {};
		\node   (29) at (4.75, -1) {};
		\node   (30) at (4.25, -4) {};
		\node   (31) at (4.75, -4) {};
		\node   (32) at (5.5, -1.5) {$\cdots$};
		\node   (33) at (6.25, -1) {};
		\node   (34) at (6.75, -1) {};
		\node   (35) at (6.25, -2) {};
		\node   (36) at (6.75, -2) {};
		\node   (37) at (4.25, -4.5) {$1$};
		\node   (38) at (4.75, -4.5) {$1$};
		\node   (41) at (5.5, 2.5) {$b$};
		\node   (42) at (5.5, 0) {$D$};
		\node   (65) at (6.5, -3) {};
		\node   (66) at (6.5, -4) {};
		\node   (75) at (6.5, -4.5) {$c$};
		\node   (76) at (6,-1) {};
		\node   (77) at (6,-4) {};
		\node   (78) at (6,-4.5) {$1$};
	\end{pgfonlayer}
	\begin{pgfonlayer}{edgelayer}
		\draw (0.center) to (1.center);
		\draw (1.center) to (2.center);
		\draw (2.center) to (3.center);
		\draw (3.center) to (0.center);
		\draw (6.center) to (8.center);
		\draw (7.center) to (9.center);
		\draw (11.center) to (13.center);
		\draw (12.center) to (14.center);
		\draw (5.center) to (4.center);
		\draw (22.center) to (23.center);
		\draw (23.center) to (24.center);
		\draw (24.center) to (25.center);
		\draw (25.center) to (22.center);
		\draw (28.center) to (30.center);
		\draw (29.center) to (31.center);
		\draw (33.center) to (35.center);
		\draw (34.center) to (36.center);
		\draw (27.center) to (26.center);
		\draw [bend right=90, looseness=6.75] (35.center) to (36.center);
		\draw (65.center) to (66.center);
		\draw (76.center) to (77.center);
	\end{pgfonlayer}
\end{tikzpicture}
\end{center}

\noindent resulting in 

\begin{center}
\begin{tikzpicture}[scale=0.5]
	\begin{pgfonlayer}{nodelayer}
		\node   (0) at (-1, 1) {};
		\node   (1) at (-1, -1) {};
		\node   (2) at (2, -1) {};
		\node   (3) at (2, 1) {};
		\node   (4) at (0.5, 1) {};
		\node   (5) at (0.5, 2) {};
		\node   (6) at (-0.75, -1) {};
		\node   (7) at (-0.25, -1) {};
		\node   (8) at (-0.75, -2) {};
		\node   (9) at (-0.25, -2) {};
		\node   (10) at (0.5, -1.5) {$\cdots$};
		\node   (11) at (1.25, -1) {};
		\node   (12) at (1.75, -1) {};
		\node   (13) at (1.25, -2) {};
		\node   (14) at (1.75, -2) {};
		\node   (15) at (-0.75, -2.5) {$1$};
		\node   (16) at (-0.25, -2.5) {$1$};
		\node   (17) at (1.25, -2.5) {$1$};
		\node   (18) at (1.75, -2.5) {$a$};
		\node   (19) at (0.5, 2.5) {$b$};
		\node   (20) at (0.5, 0) {$D$};
		\node   (21) at (3, 0) {$=$};
		\node   (22) at (6, 1.5) {};
		\node   (23) at (6, -0.5) {};
		\node   (24) at (9, -0.5) {};
		\node   (25) at (9, 1.5) {};
		\node   (26) at (7.5, 1.5) {};
		\node   (27) at (7.5, 2.5) {};
		\node   (28) at (6.25, -0.5) {};
		\node   (29) at (6.75, -0.5) {};
		\node   (30) at (6.25, -2.5) {};
		\node   (31) at (6.75, -2.5) {};
		\node   (32) at (7.5, -2) {$\cdots$};
		\node   (33) at (8, -0.5) {};
		\node   (34) at (8.75, -0.5) {};
		\node   (35) at (8, -2.5) {};
		\node   (36) at (8.75, -1.5) {};
		\node   (37) at (6.25, -3) {$1$};
		\node   (38) at (6.75, -3) {$1$};
		\node   (39) at (8.5, -3) {$1$};
		\node   (40) at (9, -3) {$a$};
		\node   (41) at (7.5, 3) {$b$};
		\node   (42) at (7.5, 0.5) {$D_c$};
		\node   (43) at (4.5, 0) {$\sum\limits_{c\rightarrow a}$};
		\node   (65) at (8.75, -1.5) {};
		\node   (67) at (8.5, -2.5) {};
		\node   (68) at (9, -2.5) {};
		\node   (75) at (9.25, -1.25) {$c$};
		\node   (76) at (8, -3) {$1$};
	\end{pgfonlayer}
	\begin{pgfonlayer}{edgelayer}
		\draw (0.center) to (1.center);
		\draw (1.center) to (2.center);
		\draw (2.center) to (3.center);
		\draw (3.center) to (0.center);
		\draw (6.center) to (8.center);
		\draw (7.center) to (9.center);
		\draw (11.center) to (13.center);
		\draw (12.center) to (14.center);
		\draw (5.center) to (4.center);
		\draw (22.center) to (23.center);
		\draw (23.center) to (24.center);
		\draw (24.center) to (25.center);
		\draw (25.center) to (22.center);
		\draw (28.center) to (30.center);
		\draw (29.center) to (31.center);
		\draw (33.center) to (35.center);
		\draw (34.center) to (36.center);
		\draw (27.center) to (26.center);
		\draw [bend left=90, looseness=6.75] (67.center) to (68.center);
	\end{pgfonlayer}
\end{tikzpicture}
\end{center}

\noindent and thus, by the induction hypothesis,

\begin{center}
\begin{tikzpicture}[scale=0.75]
	\begin{pgfonlayer}{nodelayer}
		\node   (0) at (-1, 1) {};
		\node   (1) at (-1, -1) {};
		\node   (2) at (2, -1) {};
		\node   (3) at (2, 1) {};
		\node   (4) at (0.5, 1) {};
		\node   (5) at (0.5, 2) {};
		\node   (6) at (-0.75, -1) {};
		\node   (7) at (-0.25, -1) {};
		\node   (8) at (-0.75, -2) {};
		\node   (9) at (-0.25, -2) {};
		\node   (10) at (0.5, -1.5) {$\cdots$};
		\node   (11) at (1.25, -1) {};
		\node   (12) at (1.75, -1) {};
		\node   (13) at (1.25, -2) {};
		\node   (14) at (1.75, -2) {};
		\node   (15) at (-0.75, -2.5) {$1$};
		\node   (16) at (-0.25, -2.5) {$1$};
		\node   (17) at (1.25, -2.5) {$1$};
		\node   (18) at (1.75, -2.5) {$a$};
		\node   (19) at (0.5, 2.5) {$b$};
		\node   (20) at (0.5, 0) {$D$};
		\node   (21) at (3, 0) {$=$};
		\node   (28) at (9, 1) {};
		\node   (29) at (9.5, 1) {};
		\node   (30) at (9, -1) {};
		\node   (31) at (9.5, -1) {};
		\node   (32) at (10.25, 0) {$\cdots$};
		\node   (33) at (10.75, 1) {};
		\node   (34) at (11.5, 1) {};
		\node   (35) at (10.75, -1) {};
		\node   (36) at (11.5, 0) {};
		\node   (37) at (9, -1.5) {$1$};
		\node   (38) at (9.5, -1.5) {$1$};
		\node   (39) at (11.25, -1.5) {$1$};
		\node   (40) at (11.75, -1.5) {$a$};
		\node   (65) at (11.5, 0) {};
		\node   (67) at (11.25, -1) {};
		\node   (68) at (11.75, -1) {};
		\node   (75) at (12, 0.25) {$c$};
		\node   (76) at (10.75, -1.5) {$1$};
		\node   (81) at (6, -0.25) {$\sum\limits_{c\rightarrow a}\sum\limits_{\mathbf{p}\in P(c,b)_k}\alpha_{\mathbf{p}}u_{\mathbf{p}}\circ $};
	\end{pgfonlayer}
	\begin{pgfonlayer}{edgelayer}
		\draw (0.center) to (1.center);
		\draw (1.center) to (2.center);
		\draw (2.center) to (3.center);
		\draw (3.center) to (0.center);
		\draw (6.center) to (8.center);
		\draw (7.center) to (9.center);
		\draw (11.center) to (13.center);
		\draw (12.center) to (14.center);
		\draw (5.center) to (4.center);
		\draw (28.center) to (30.center);
		\draw (29.center) to (31.center);
		\draw (33.center) to (35.center);
		\draw (34.center) to (36.center);
		\draw [bend left=90, looseness=6.75] (67.center) to (68.center);
	\end{pgfonlayer}
\end{tikzpicture}
\end{center}

\noindent which shows the desired result.  Therefore, the $ \overline{\mathcal{H}}$ is faithful on Hom$_{\overline{\mathbf{Dgrams}}}\left(1^{\otimes k}, b\right)$ for all $b\in I_{G}$.


By considering the vertical reflection of each diagram, the analogous argument shows that $ \overline{\mathcal{H}}$ is faithful from Hom$_{\overline{\mathbf{Dgrams}}}\left(b, 1^{\otimes k}\right)$.

\end{proof}

\begin{lemma}
The functor $ \overline{\mathcal{H}}$ is faithful on Hom$_{\overline{\mathbf{Dgrams}}}\left(1^{\otimes k}, 1^{\otimes \ell}\right)$ for all $k,\ell\in \mathbb{N}$.
\end{lemma}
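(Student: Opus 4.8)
The plan is to bootstrap from the two cases already in hand — faithfulness of $\overline{\mathcal{H}}$ on $\operatorname{Hom}_{\overline{\mathbf{Dgrams}}}(1^{\otimes k},b)$ and on $\operatorname{Hom}_{\overline{\mathbf{Dgrams}}}(b,1^{\otimes\ell})$ — by sandwiching an arbitrary morphism between two copies of the identity resolution. Concretely, let $f\in\operatorname{Hom}_{\overline{\mathbf{Dgrams}}}(1^{\otimes k},1^{\otimes\ell})$ with $\overline{\mathcal{H}}(f)=0$; we want $f=0$. Using the identity $\sum_{b}\sum_{\mathbf{p}\in P(1,b)_m}d_{\mathbf{p}}\circ u_{\mathbf{p}}=\operatorname{id}_{1^{\otimes m}}$ (established in the lemma just before the $\mathbf{T}$-example, valid in $\mathbf{Dgrams}$ and hence in $\overline{\mathbf{Dgrams}}$) on both the source and the target of $f$, I would write
\[
f=\sum_{b,c\in I_G}\ \sum_{\mathbf{p}\in P(1,b)_\ell}\ \sum_{\mathbf{q}\in P(1,c)_k}d_{\mathbf{p}}\circ\bigl(u_{\mathbf{p}}\circ f\circ d_{\mathbf{q}}\bigr)\circ u_{\mathbf{q}},
\]
noting that $u_{\mathbf{p}}\circ f\circ d_{\mathbf{q}}\in\operatorname{Hom}_{\overline{\mathbf{Dgrams}}}(c,b)$.

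Next I would invoke the hypothesis (\ref{eqn:CatSchur}): $\operatorname{Hom}_{\overline{\mathbf{Dgrams}}}(c,b)$ equals $\CC\cdot\operatorname{id}_b$ when $b=c$ and is $0$ otherwise, so $u_{\mathbf{p}}\circ f\circ d_{\mathbf{q}}=\delta_{b,c}\,\lambda_{\mathbf{p},\mathbf{q}}\,\operatorname{id}_b$ for scalars $\lambda_{\mathbf{p},\mathbf{q}}\in\CC$. This collapses the double sum over $b,c$ to a single sum and yields
\[
f=\sum_{b\in I_G}\ \sum_{\mathbf{p}\in P(1,b)_\ell}\ \sum_{\mathbf{q}\in P(1,b)_k}\lambda_{\mathbf{p},\mathbf{q}}\;d_{\mathbf{p}}\circ u_{\mathbf{q}}.
\]
Applying $\overline{\mathcal{H}}$ and using $\overline{\mathcal{H}}(u_{\mathbf{q}})=\pi_{\mathbf{q}}$ and $\overline{\mathcal{H}}(d_{\mathbf{p}})=\iota_{\mathbf{p}}$ then gives
\[
0=\overline{\mathcal{H}}(f)=\sum_{b\in I_G}\ \sum_{\mathbf{p},\mathbf{q}}\lambda_{\mathbf{p},\mathbf{q}}\;\iota_{\mathbf{p}}\circ\pi_{\mathbf{q}}\ \in\ \operatorname{Hom}_G\!\bigl((G^{(1)})^{\otimes k},(G^{(1)})^{\otimes\ell}\bigr),
\]
so it remains to see that this forces every $\lambda_{\mathbf{p},\mathbf{q}}=0$.

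The crux is therefore a linear-independence statement on the representation-theoretic side: the family $\{\iota_{\mathbf{p}}\circ\pi_{\mathbf{q}} : b\in I_G,\ \mathbf{p}\in P(1,b)_\ell,\ \mathbf{q}\in P(1,b)_k\}$ is linearly independent in $\operatorname{Hom}_G((G^{(1)})^{\otimes k},(G^{(1)})^{\otimes\ell})$. I would prove this using semisimplicity of $G\text{-}\textbf{mod}$: there is an isomorphism $\operatorname{Hom}_G((G^{(1)})^{\otimes k},(G^{(1)})^{\otimes\ell})\cong\bigoplus_{b\in I_G}\operatorname{Hom}_G(G^{(b)},(G^{(1)})^{\otimes\ell})\otimes_\CC\operatorname{Hom}_G((G^{(1)})^{\otimes k},G^{(b)})$ sending $\varphi\otimes\psi\mapsto\varphi\circ\psi$ (Schur's lemma on the simple $G^{(b)}$ makes this well defined and bijective). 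Under this decomposition $\iota_{\mathbf{p}}\circ\pi_{\mathbf{q}}$ corresponds to the pure tensor $\iota_{\mathbf{p}}\otimes\pi_{\mathbf{q}}$ in the $b$-summand; the earlier lemmas show $\{\pi_{\mathbf{q}}:\mathbf{q}\in P(1,b)_k\}$ is a basis of $\operatorname{Hom}_G((G^{(1)})^{\otimes k},G^{(b)})$ and $\{\iota_{\mathbf{p}}:\mathbf{p}\in P(1,b)_\ell\}$ is a basis of $\operatorname{Hom}_G(G^{(b)},(G^{(1)})^{\otimes\ell})$, so the $\iota_{\mathbf{p}}\otimes\pi_{\mathbf{q}}$ form a basis of each tensor-product summand and hence of the whole direct sum. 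Thus all $\lambda_{\mathbf{p},\mathbf{q}}$ vanish, $f=0$, and $\overline{\mathcal{H}}$ is faithful on $\operatorname{Hom}_{\overline{\mathbf{Dgrams}}}(1^{\otimes k},1^{\otimes\ell})$.

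I expect the main obstacle to be bookkeeping rather than anything conceptual: one must keep the path-indices straight when inserting the identity resolution on both sides, and one must be careful that the $b$-graded pieces of $\operatorname{Hom}_G((G^{(1)})^{\otimes k},(G^{(1)})^{\otimes\ell})$ are genuinely in direct sum so that independence within each summand upgrades to independence of the whole family. The only place where more than formal manipulation enters is that last linear-independence step, which genuinely uses semisimplicity of $G\text{-}\textbf{mod}$ together with the basis statements from the preceding lemmas; everything else is diagram algebra in $\overline{\mathbf{Dgrams}}$ driven by (\ref{eqn:CatSchur}) and the identity resolution.
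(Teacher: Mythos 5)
Your argument is correct, but it is not the route the paper takes, so a comparison is in order. The paper resolves the identity only on the \emph{target} side: it composes the putative kernel element with a single $u_{\mathbf{q}}$, lands in $\operatorname{Hom}_{\overline{\mathbf{Dgrams}}}\left(1^{\otimes k},b\right)$ where faithfulness was established in the preceding lemma, concludes $\sum_D\alpha_D\,(u_{\mathbf{q}}\circ D)=0$ already in $\overline{\mathbf{Dgrams}}$, and then recovers $\sum_D\alpha_D D=0$ by post-composing with $d_{\mathbf{q}}$ and summing over all $b$ and all $\mathbf{q}\in P(1,b)_{\ell}$ via the identity resolution. You instead resolve on \emph{both} sides, collapse the middle to scalars using (\ref{eqn:CatSchur}), and reduce faithfulness to the joint linear independence of $\left\{\iota_{\mathbf{p}}\circ\pi_{\mathbf{q}}\right\}$ in $\operatorname{Hom}_{G}\left(\left(G^{(1)}\right)^{\otimes k},\left(G^{(1)}\right)^{\otimes \ell}\right)$, which you prove from semisimplicity and the basis statements for the $\pi_{\mathbf{q}}$ and $\iota_{\mathbf{p}}$. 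Your version costs one extra representation-theoretic input (the direct-sum decomposition of the hom space over $b$ and the resulting independence across summands), but it buys more: it exhibits $\left\{d_{\mathbf{p}}\circ u_{\mathbf{q}}\right\}$ as an explicit spanning set (in fact a basis) of $\operatorname{Hom}_{\overline{\mathbf{Dgrams}}}\left(1^{\otimes k},1^{\otimes \ell}\right)$, which the paper's one-sided argument never makes explicit. The paper's version is more economical in that it reuses the already-proven one-sided faithfulness and never needs independence of the composites $\iota_{\mathbf{p}}\circ\pi_{\mathbf{q}}$, only of the $\pi_{\mathbf{p}}$ for fixed $b$. One small bookkeeping point common to both arguments: the identity resolution $\sum_{b}\sum_{\mathbf{p}\in P(1,b)_{m}}d_{\mathbf{p}}\circ u_{\mathbf{p}}=\operatorname{id}_{1^{\otimes m}}$ is stated for $m\geq 2$, so the cases $k\leq 1$ or $\ell\leq 1$ should be handled separately (trivially, via (\ref{eqn:CatSchur}) or the previous lemma); this is not a genuine gap.
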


\begin{proof} Let $D_k^{\ell}$ be the set of all diagrams in Hom$_{\overline{\mathbf{Dgrams}}}\left(1^{\otimes k},1^{\otimes \ell}\right)$.  Now suppose that $$ \overline{\mathcal{H}}\left(\sum\limits_{D\in D_k^{\ell}} \alpha_{D} \regDdiag\right)=0$$
where only finitely many of the $\alpha_D$ are non-zero.

Let $\mathbf{q}\in P(1,b)_{\ell}$.  Then $$0= \overline{\mathcal{H}}\left( u_{\mathbf{q}}\right)\circ \overline{\mathcal{H}}\left(\sum\limits_{D\in D_k^{\ell}} \alpha_{D} \regDdiag\right)$$
$$= \overline{\mathcal{H}}\left(\sum\limits_{D\in D_k^{\ell}} \alpha_{D} \left( u_{\mathbf{q}}\circ \regDdiag\right)\right).$$
Now notice that for any $b\in I_{G}$, $$u_{\mathbf{q}}\circ \regDdiag \in \operatorname{Hom}_{\overline{\mathbf{Dgrams}}}\left(1^{\otimes k},b\right),$$ and thus by the previous lemma, we have the following equalities:
$$0=\sum\limits_{D\in D_k^{\ell}} \alpha_{D} \left(u_{\mathbf{q}}\circ \regDdiag\right)=  \sum\limits_{D\in D_k^{\ell}} \alpha_{D} \left(d_{\mathbf{q}} \circ u_{\mathbf{q}}\circ \regDdiag\right)$$
$$= \sum\limits_{b\in I_{G}}\sum\limits_{\mathbf{p}\in P(1,b)_{\ell}}\sum\limits_{D\in D_k^{\ell}} \alpha_{D} \left(d_{\mathbf{p}} \circ u_{\mathbf{p}}\circ \regDdiag\right),$$

\noindent and since $\sum\limits_{\mathbf{p}\in P(1,b)_{\ell}} d_{\mathbf{p}}\circ u_{\mathbf{p}}=\operatorname{id}_{1^{\otimes \ell}}$, we have 
$$0 = \sum\limits_{D\in D_k^{\ell}} \alpha_{D} \regDdiag .$$

Therefore, $ \overline{\mathcal{H}}$ is faithful from Hom$_{\overline{\mathbf{Dgrams}}}\left(1^{\otimes k}, 1^{\otimes \ell}\right)$ for all $k,\ell\in \mathbb{N}$.
\end{proof}

\begin{theorem}
\label{thm:FGirrFaithful}
The functor $ \overline{\mathcal{H}}$ is faithful on $\overline{\mathbf{Dgrams}}$.
\end{theorem}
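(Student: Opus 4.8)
The plan is to reduce the general case to the two special cases already established, namely faithfulness of $\overline{\mathcal{H}}$ on $\operatorname{Hom}_{\overline{\mathbf{Dgrams}}}\left(1^{\otimes k}, b\right)$, on $\operatorname{Hom}_{\overline{\mathbf{Dgrams}}}\left(b, 1^{\otimes k}\right)$, and on $\operatorname{Hom}_{\overline{\mathbf{Dgrams}}}\left(1^{\otimes k}, 1^{\otimes \ell}\right)$, exactly mirroring the bootstrapping structure used to prove Theorem \ref{thm:FGirrFull}. The key device is that every object $a\in I_G$ of $\mathbf{Dgrams}$ (and hence of $\overline{\mathbf{Dgrams}}$) is a retract of $1^{\otimes k}$ for suitable $k$: since the representation graph is connected, there is a path $\mathbf{q}\in P(1,a)_{k_a}$ realizing $G^{(a)}\subset (G^{(1)})^{\otimes k_a}$, and the diagrams $u_{\mathbf{q}}$ and $d_{\mathbf{q}}$ satisfy $u_{\mathbf{q}}\circ d_{\mathbf{q}} = \operatorname{id}_a$ in $\overline{\mathbf{Dgrams}}$ (this is forced by relation (\ref{eqn:CatSchur}), since $u_{\mathbf{q}}\circ d_{\mathbf{q}}$ is a scalar multiple of $\operatorname{id}_a$ and its image under $\overline{\mathcal{H}}$ is $\pi_{\mathbf{q}}\circ\iota_{\mathbf{q}}=\operatorname{id}_{G^{(a)}}$, so the scalar is $1$). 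Thus for each $a_i$ pick $u_i := u_{\mathbf{q}_i}$, $d_i := d_{\mathbf{q}_i}$ with $u_i\circ d_i = \operatorname{id}_{a_i}$, and likewise $u_j', d_j'$ for each $b_j$.

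Now let $f, g \in \operatorname{Hom}_{\overline{\mathbf{Dgrams}}}\left(\bigotimes_{i=1}^n a_i, \bigotimes_{j=1}^m b_j\right)$ with $\overline{\mathcal{H}}(f)=\overline{\mathcal{H}}(g)$. Set $k=\sum_i k_{a_i}$, $\ell=\sum_j k_{b_j}$, and consider
\begin{equation*}
\widetilde{f} := \Bigl(\bigotimes_{j=1}^m d_j'\Bigr)\circ f \circ \Bigl(\bigotimes_{i=1}^n u_i\Bigr) \in \operatorname{Hom}_{\overline{\mathbf{Dgrams}}}\left(1^{\otimes k}, 1^{\otimes \ell}\right),
\end{equation*}
and analogously $\widetilde{g}$. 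Since $\overline{\mathcal{H}}$ is a monoidal $\CC$-linear functor, $\overline{\mathcal{H}}(\widetilde{f}) = \overline{\mathcal{H}}(\widetilde{g})$, so by the previous lemma $\widetilde{f} = \widetilde{g}$ in $\overline{\mathbf{Dgrams}}$. It then remains to recover $f$ from $\widetilde{f}$: using $u_i\circ d_i=\operatorname{id}_{a_i}$ and $u_j'\circ d_j'=\operatorname{id}_{b_j}$ together with the interchange law for the monoidal structure, one computes
\begin{equation*}
\Bigl(\bigotimes_{j=1}^m u_j'\Bigr)\circ \widetilde{f}\circ \Bigl(\bigotimes_{i=1}^n d_i\Bigr) = \Bigl(\bigotimes_{j=1}^m u_j'\circ d_j'\Bigr)\circ f \circ \Bigl(\bigotimes_{i=1}^n u_i\circ d_i\Bigr) = f,
\end{equation*}
and the same identity holds with $g$ in place of $f$. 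Hence $f = g$, proving faithfulness.

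The steps in order: first record the retract lemma $u_{\mathbf{q}}\circ d_{\mathbf{q}}=\operatorname{id}_a$ in $\overline{\mathbf{Dgrams}}$, deducing the scalar is $1$ from (\ref{eqn:CatSchur}) and the fact that $\overline{\mathcal{H}}$ sends it to $\operatorname{id}_{G^{(a)}}$; second, set up the conjugation $f\mapsto\widetilde{f}$ into the all-$1$ hom-space and invoke the previous lemma; third, invert the conjugation to conclude $f=g$. The main obstacle I anticipate is the first step: one must be careful that $u_{\mathbf{q}}\circ d_{\mathbf{q}}$ really is a scalar multiple of $\operatorname{id}_a$ rather than a more general endomorphism — this is exactly what the hypothesis (\ref{eqn:CatSchur}) guarantees, so the argument genuinely uses that the quotient category satisfies categorical Schur's lemma — and then that the scalar is nonzero (indeed $1$), which follows because $\overline{\mathcal{H}}$ is defined and $\pi_{\mathbf{q}}\circ\iota_{\mathbf{q}}=\operatorname{id}_{G^{(b)}}$ was arranged by construction in Section \ref{sec:Gmodirr}. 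Everything else is a routine bookkeeping argument with the monoidal interchange law, entirely parallel to the proof of Theorem \ref{thm:FGirrFull}.
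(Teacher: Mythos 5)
Your proposal is correct and follows essentially the same route as the paper: conjugate by the path diagrams $u_{\mathbf{q}}, d_{\mathbf{q}}$ to land in $\operatorname{Hom}_{\overline{\mathbf{Dgrams}}}\left(1^{\otimes k},1^{\otimes \ell}\right)$, invoke the preceding lemma, and undo the conjugation using the interchange law and $u_{\mathbf{q}}\circ d_{\mathbf{q}}=\alpha\operatorname{id}_a$ with $\alpha\neq 0$. Your observation that the scalar is in fact $1$ (since $\overline{\mathcal{H}}(u_{\mathbf{q}}\circ d_{\mathbf{q}})=\pi_{\mathbf{q}}\circ\iota_{\mathbf{q}}=\operatorname{id}_{G^{(a)}}$) is a slight sharpening of the paper's claim that the scalar is merely nonzero, but the argument is otherwise the same.
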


\begin{proof}
Let $D_{a_n}^{b_m}$ be the set of all diagrams in Hom$_{\overline{\mathbf{Dgrams}}}\left(\bigotimes\limits_{i=1}^n a_i,\bigotimes\limits_{j=1}^m b_j\right)$ where $a_i, b_j\in \operatorname{Obj}\left(\overline{\mathbf{Dgrams}}\right)$ for all $1\leq i\leq n$ and $1\leq j\leq n$.

Suppose $ \overline{\mathcal{H}}\left(\sum\limits_{d\in D_{a_n}^{b_m}}\alpha_d d\right)=0$.  Then we also have 

\[
\left(\iota_{\ell_{b_1}}^{b_1}\otimes\cdots\otimes \iota_{\ell_{b_m}}^{b_m}\right)\circ \overline{\mathcal{H}}\left(\sum\limits_{d\in D_{a_n}^{b_m}}\alpha_d d\right) \circ \left(\pi_{k_{a_1}}^{a_1}\otimes\cdots\otimes \pi_{k_{a_n}}^{a_n}\right)=0,
\]

\noindent and using the fact that $ \overline{\mathcal{H}}$ is a monoidal $\CC$-linear functor along side the lemmas above, we have the following string of equalities:

$$\left(\iota_{\ell_{b_1}}^{b_1}\otimes\cdots\otimes \iota_{\ell_{b_m}}^{b_m}\right)\circ \overline{\mathcal{H}}\left(\sum\limits_{d\in D_{a_n}^{b_m}}\alpha_d d\right) \circ \left(\pi_{k_{a_1}}^{a_1}\otimes\cdots\otimes \pi_{k_{a_n}}^{a_n}\right)$$ 
$$= \overline{\mathcal{H}}\left(\left(d_{\ell_{b_1}}^{b_1}\otimes\cdots\otimes d_{\ell_{b_m}}^{b_m}\right)\circ\sum\limits_{d\in D_{a_n}^{b_m}}\alpha_d d \circ \left(u_{k_{a_1}}^{a_1}\otimes\cdots\otimes u_{k_{a_n}}^{a_n}\right)\right)$$ 
$$= \overline{\mathcal{H}}\left(\sum\limits_{d\in D_{a_n}^{b_m}}\left(\alpha_d \left(d_{\ell_{b_1}}^{b_1}\otimes\cdots\otimes d_{\ell_{b_m}}^{b_m}\right)\circ d \circ \left(u_{k_{a_1}}^{a_1}\otimes\cdots\otimes u_{k_{a_n}}^{a_n}\right)\right)\right)=0.$$

\noindent Since $ \overline{\mathcal{H}}$ is faithful on Hom$_{\overline{\mathbf{Dgrams}}}\left(1^{\otimes k},1^{\otimes \ell}\right)$, then 
$$\sum\limits_{d\in D_{a_n}^{b_m}}\left(\alpha_d \left(d_{\ell_{b_1}}^{b_1}\otimes\cdots\otimes d_{\ell_{b_m}}^{b_m}\right)\circ d \circ \left(u_{k_{a_1}}^{a_1}\otimes\cdots\otimes u_{k_{a_n}}^{a_n}\right)\right)=0.$$ 
\noindent This implies that 
\begin{multline*}
\left(u_{\ell_{b_1}}^{b_1}\otimes\cdots\otimes u_{\ell_{b_m}}^{b_m}\right)\cdot\\ 
\cdot\sum\limits_{d\in D_{a_n}^{b_m}}\left(\alpha_d \left(d_{\ell_{b_1}}^{b_1}\otimes\cdots\otimes d_{\ell_{b_m}}^{b_m}\right)\circ d \circ \left(u_{k_{a_1}}^{a_1}\otimes\cdots\otimes u_{k_{a_n}}^{a_n}\right)\right)\cdot\\
\cdot\left(d_{k_{a_1}}^{a_1}\otimes\cdots\otimes d_{k_{a_n}}^{a_n}\right)=0,
\end{multline*}
\noindent and thus

\[
\sum\limits_{d\in D_{a_n}^{b_m}}\alpha_d \left(u_{\ell_{b_1}}^{b_1}\circ d_{\ell_{b_1}}^{b_1}\otimes\cdots\otimes u_{\ell_{b_m}}^{b_m}\circ d_{\ell_{b_m}}^{b_m}\right)\circ
d \circ \left(u_{k_{a_1}}^{a_1}\circ d_{k_{a_1}}^{a_1}\otimes\cdots\otimes u_{k_{a_n}}^{a_n}\circ d_{k_{a_n}}^{a_n}\right)=0.\]
Now notice, since $u_{k_{a}}^{a}\circ d_{k_{a}}^{a}=\alpha_{a,k_{a}}\operatorname{id}_{a}$ with $\alpha_{a,k_{a}}$ not $0$, we finally have $$\alpha\sum\limits_{d\in D_{a_n}^{b_m}}\alpha_d d=0$$ where $\alpha$ is the non-zero scalar $\alpha=\prod\limits_{i=1}^{n}\prod\limits_{j=1}^{m}\alpha_{a_i,k_{a_i}}\alpha_{b_j,\ell_{b_j}}$.  Therefore, $ \overline{\mathcal{H}}$ is faithful.
\end{proof}

Combining the fullness result given in Theorem \ref{thm:FGirrFull} and the faithfulness results in Theorem \ref{thm:FGirrFaithful} yields the following result.

\begin{theorem}
Let $R(V,G)$ be a representation graph which is connected and contains no multiple parallel edges.  Let $\mathcal{I}$ be a tensor ideal of $\mathbf{Dgrams}$ which satisfies (\ref{eqn:CatSchur}).  Then there is an equivalence of categories $$\overline{H}: \bigslant{\mathbf{Dgrams}}{\mathcal{I}}\longrightarrow G-\mathbf{mod}_{\text{irr}}.$$
\end{theorem}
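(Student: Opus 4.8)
The plan is to show that $\overline{\mathcal{H}}$ is simultaneously full, faithful, and essentially surjective, and then conclude by the standard characterization of equivalences of categories. Almost all of the work has already been carried out in the preceding results, so the argument is largely bookkeeping that assembles Theorems~\ref{thm:FGirrFull} and \ref{thm:FGirrFaithful}.

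First I would record that $\overline{\mathcal{H}}$ is well defined and surjective on objects. By hypothesis $\mathcal{I}$ is a tensor ideal with $\mathcal{H}(f)=0$ for every $f\in\mathcal{I}(X,Y)$, so $\mathcal{H}$ factors as $\mathcal{H}=\overline{\mathcal{H}}\circ Q$, where $Q\colon\mathbf{Dgrams}\to\overline{\mathbf{Dgrams}}$ is the quotient functor; this is precisely the construction of the induced functor $\overline{\mathcal{H}}$ described at the start of this subsection. Since $Q$ is the identity on objects and $\mathcal{H}$ sends $\bigotimes_{i}a_i\mapsto\bigotimes_{i}G^{(a_i)}$, while the objects of $G\text{-}\mathbf{mod}_{\text{irr}}$ are by definition exactly the finite tensor products of the $G^{(a)}$ with $a\in I_G$, the functor $\overline{\mathcal{H}}$ is surjective on objects, hence a fortiori essentially surjective.

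Next I would deduce fullness from Theorem~\ref{thm:FGirrFull}, which asserts that $\mathcal{H}$ is full. Since $Q$ is surjective on each Hom-space and $\mathcal{H}=\overline{\mathcal{H}}\circ Q$, every $G$-module homomorphism between objects of $G\text{-}\mathbf{mod}_{\text{irr}}$ is of the form $\mathcal{H}(d)=\overline{\mathcal{H}}(Q(d))$ for some diagram $d$, so $\overline{\mathcal{H}}$ is full as well. Faithfulness is exactly the content of Theorem~\ref{thm:FGirrFaithful}, whose running hypothesis, namely that $\mathcal{I}$ satisfies~(\ref{eqn:CatSchur}), is the one assumed in the present statement. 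A functor that is full, faithful, and essentially surjective is an equivalence of categories, which yields the claim.

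The main obstacle, such as it is, has already been overcome: it is the faithfulness statement of Theorem~\ref{thm:FGirrFaithful}, which required the Schur-type relation~(\ref{eqn:CatSchur}) together with the combinatorics of the maps $u_{\mathbf{p}}$, $d_{\mathbf{p}}$ and the identity $\sum_{b\in I_G}\sum_{\mathbf{p}\in P(1,b)_{k}}d_{\mathbf{p}}\circ u_{\mathbf{p}}=\operatorname{id}_{1^{\otimes k}}$. For the present theorem the only genuine point to check is that fullness of $\mathcal{H}$ passes to the quotient, and this is immediate from $Q$ being full; no further computation is needed.
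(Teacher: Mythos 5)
Your proposal is correct and takes essentially the same route as the paper, which simply combines Theorem~\ref{thm:FGirrFull} (fullness of $\mathcal{H}$) with Theorem~\ref{thm:FGirrFaithful} (faithfulness of $\overline{\mathcal{H}}$) and leaves essential surjectivity and the descent of fullness through the quotient implicit. Your explicit remarks that $\mathcal{H}=\overline{\mathcal{H}}\circ Q$ with $Q$ full on Hom-spaces, and that the objects on both sides are indexed identically, are exactly the bookkeeping the paper omits.
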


Of course, it remains to determine $\mathcal{I}$, for example, by giving a set of relations.  This will presumably depend on the specifics of the representation theory of $G$ and would need to be determined on a case-by-case basis.  

In this current setting, determining $\mathcal{I}$ seems to require computation over on the representation theory side to determine diagrammatic relations.  Depending on the specific example, this can be a non-trivial task.  In the next section, we give another way of determining $\mathcal{I}$.


\section{Final Remarks}
\label{chpt:FinalRems}


It is worth noting that we can be even more general in our set up with much the same result.  Suppose instead that we begin with a semi-simple, monoidal, $\CC$-linear category $\mathcal{M}$ and restrict to the full subcategory monoidally generated by the simple objects, which we can denote as $\mathcal{M}_{\text{irr}}$, the objects of which can be indexed by $I_{\mathcal{M}}$.  By fixing an object $V$ of $\mathcal{M}$, we may construct a directed graph $\Gamma_{\mathcal{M},V}$ in an analogous way to a representation graph.  Assume $\Gamma:=\Gamma_{\mathcal{M},V}$ is a directed, connected graph which does not contain any multiple parallel edges between vertices, we may form the following definition.  For convenience, we will identify the unit object of $\mathcal{M}$, $\mathbb{1}$, with the vertex of $\Gamma$ corresponding to $\mathbb{1}$.

\begin{definition}Let $\mathbf{Dgrams}_{\Gamma, \star}$ be defined as the monoidal $\CC$-linear category with objects generated by $\star$ and $a\in I_{\mathbb{M}}$ and morphisms generated by 

\begin{center}
\begin{tikzpicture}[scale=.5]
	\begin{pgfonlayer}{nodelayer}
		\node   (2) at (-1, -1) {};
		\node   (3) at (1, -1) {};
		\node   (4) at (0, 0) {};
		\node   (5) at (0, 1) {};
		\node   (12) at (-1, -1.5) {$\star$};
		\node   (13) at (1, -1.5) {$a$};
		\node   (14) at (0, 1.5) {$b$};
	\end{pgfonlayer}
	\begin{pgfonlayer}{edgelayer}
		\draw [bend left=90, looseness=1.75] (2.center) to (3.center);
		\draw (4.center) to (5.center);
	\end{pgfonlayer}
\end{tikzpicture} \hspace{6mm} \begin{tikzpicture}[scale=.5]
	\begin{pgfonlayer}{nodelayer}
		\node   (2) at (-1, 1) {};
		\node   (3) at (1, 1) {};
		\node   (4) at (0, 0) {};
		\node   (5) at (0, -1) {};
		\node   (12) at (-1, 1.5) {$\star$};
		\node   (13) at (1, 1.5) {$a$};
		\node   (14) at (0, -1.5) {$b$};
	\end{pgfonlayer}
	\begin{pgfonlayer}{edgelayer}
		\draw [bend right=90, looseness=1.75] (2.center) to (3.center);
		\draw (4.center) to (5.center);
	\end{pgfonlayer}
\end{tikzpicture}\hspace{6mm} \begin{tikzpicture}[scale=.5]
	\begin{pgfonlayer}{nodelayer}
		\node   (5) at (0, -1) {};
		\node   (14) at (0, -1.5) {$a$};
		\node   (15) at (0, 1) {};
		\node   (16) at (0, 1.5) {$a$};
	\end{pgfonlayer}
	\begin{pgfonlayer}{edgelayer}
		\draw (15.center) to (5.center);
	\end{pgfonlayer}
\end{tikzpicture}\hspace{6mm} \begin{tikzpicture}[scale=.5]
	\begin{pgfonlayer}{nodelayer}
		\node   (5) at (0, -1) {};
		\node   (14) at (0, -1.5) {$\star$};
		\node   (15) at (0, 1) {};
		\node   (16) at (0, 1.5) {$\star$};
	\end{pgfonlayer}
	\begin{pgfonlayer}{edgelayer}
		\draw (15.center) to (5.center);
	\end{pgfonlayer}
\end{tikzpicture} \hspace{6mm}\begin{tikzpicture}[scale=.5]
	\begin{pgfonlayer}{nodelayer}
		\node   (5) at (0, -1) {};
		\node   (14) at (0, -1.5) {$c$};
		\node   (15) at (0, 1) {};
		\node   (16) at (0, 1.5) {$\star$};
		\node   (17) at (0, 0) {$\triangledown$};
	\end{pgfonlayer}
	\begin{pgfonlayer}{edgelayer}
		\draw (15.center) to (5.center);
	\end{pgfonlayer}
\end{tikzpicture} \hspace{6mm}\begin{tikzpicture}[scale=.5]
	\begin{pgfonlayer}{nodelayer}
		\node   (5) at (0, -1) {};
		\node   (14) at (0, -1.5) {$\star$};
		\node   (15) at (0, 1) {};
		\node   (16) at (0, 1.5) {$c$};
		\node   (17) at (0, 0) {$\vartriangle$};
	\end{pgfonlayer}
	\begin{pgfonlayer}{edgelayer}
		\draw (15.center) to (5.center);
	\end{pgfonlayer}
\end{tikzpicture}
\end{center}
\noindent for all $a,b\in I_{\mathcal{M}}$ and $c\in I_{\mathcal{M}}$ adjacent to $\mathbb{1}$ in the directed graph $\Gamma$.  The generating diagrams are subjected to the following relations:

\begin{center}
 \begin{tikzpicture}[scale=0.5, baseline=-2]
	\begin{pgfonlayer}{nodelayer}
		\node   (0) at (-2, 2) {};
		\node   (1) at (-2, 1) {};
		\node   (2) at (-2.5, 0) {};
		\node   (3) at (-1.5, 0) {};
		\node   (4) at (-2, -1) {};
		\node   (5) at (-2, -2) {};
		\node   (6) at (-0.25, 0) {$=$};
		\node   (7) at (2, 1) {};
		\node   (8) at (2, -1) {};
		\node   (9) at (-2, -2.5) {$a$};
		\node   (10) at (2, -1.5) {$a$};
		\node   (11) at (2, 1.5) {$a$};
		\node   (12) at (-2, 2.5) {$a$};
		\node   (13) at (-1, 0) {$b$};
		\node   (14) at (-3, 0) {$\star$};
		\node   (15) at (1, 0) {};
	\end{pgfonlayer}
	\begin{pgfonlayer}{edgelayer}
		\draw (0.center) to (1.center);
		\draw [bend left=90, looseness=3.50] (2.center) to (3.center);
		\draw (4.center) to (5.center);
		\draw (7.center) to (8.center);
		\draw [bend right=90, looseness=3.25] (2.center) to (3.center);
	\end{pgfonlayer}
\end{tikzpicture}, \hspace{5mm} \begin{tikzpicture}[scale=0.5, baseline=-2]
	\begin{pgfonlayer}{nodelayer}
		\node   (0) at (-2, 0.5) {};
		\node   (1) at (-2, -0.5) {};
		\node   (2) at (-2.5, 1.5) {};
		\node   (3) at (-1.5, 1.5) {};
		\node   (4) at (-2.5, -1.5) {};
		\node   (5) at (-1.5, -1.5) {};
		\node   (9) at (0, 1) {};
		\node   (10) at (0, -1) {};
		\node   (11) at (1, 1) {};
		\node   (12) at (1, -1) {};
		\node   (13) at (-3.5, 0) {$\sum\limits_{b\rightarrow a}$};
		\node   (14) at (-1.5, 0) {$b$};
		\node   (15) at (-0.75, 0) {$=$};
		\node   (16) at (0, 1.5) {$\star$};
		\node   (17) at (1, 1.5) {$a$};
		\node   (18) at (0, -1.5) {$\star$};
		\node   (19) at (1, -1.5) {$a$};
		\node   (20) at (-2.5, -2) {$\star$};
		\node   (21) at (-1.5, -2) {$a$};
		\node   (22) at (-2.5, 2) {$\star$};
		\node   (23) at (-1.5, 2) {$a$};
	\end{pgfonlayer}
	\begin{pgfonlayer}{edgelayer}
		\draw [bend right=90, looseness=3.50] (2.center) to (3.center);
		\draw (0.center) to (1.center);
		\draw [bend left=90, looseness=3.50] (4.center) to (5.center);
		\draw (9.center) to (10.center);
		\draw (11.center) to (12.center);
	\end{pgfonlayer}
\end{tikzpicture}, \hspace{5mm} \begin{tikzpicture}[scale=0.5, baseline=-2] 
	\begin{pgfonlayer}{nodelayer}
		\node   (9) at (0, 1) {};
		\node   (10) at (0, -1) {};
		\node   (13) at (-3.5, 0) {$\sum\limits_{a-0}$};
		\node   (15) at (-0.75, 0) {$=$};
		\node   (16) at (0, 1.5) {$\star$};
		\node   (18) at (0, -1.5) {$\star$};
		\node   (20) at (-2, -1) {};
		\node   (21) at (-2, 1) {};
		\node   (22) at (-2, -0.5) {$\triangledown$};
		\node   (23) at (-2, 1.5) {$\star$};
		\node   (24) at (-2, -1.5) {$\star$};
		\node   (25) at (-2, 0.5) {$\vartriangle$};
		\node   (26) at (-1.5, 0) {$a$};
	\end{pgfonlayer}
	\begin{pgfonlayer}{edgelayer}
		\draw (9.center) to (10.center);
		\draw (21.center) to (20.center);
	\end{pgfonlayer}
\end{tikzpicture}
\end{center}
\end{definition}

Denote by $\mathcal{M}^{(a)}$ the simple object of $\mathcal{M}$ corresponding to the index $a\in I_{\mathcal{M}}$.  Let $\pi_{a,b}$ be a map in $\operatorname{Hom}_{\mathcal{M}}\left(V\otimes \mathcal{M}^{(a)},\mathcal{M}^{(b)}\right)$.  As $\Gamma$ has no multiple edges and $\mathcal{M}^{(b)}$ is simple, $\pi_{a,b}$ is unique, up to scaling.  Let $\iota_{a,b}$ be in $\operatorname{Hom}_{\mathcal{M}}\left(\mathcal{M}^{(b)},V\otimes \mathcal{M}^{(a)}\right)$, such that $\pi_{a,b}\circ\iota_{a,b}=\operatorname{id}_{\mathcal{M}^{(b)}}$.  Furthermore, we can define a unique, up to scaling, map, $\pi_{V,c}$ in $\operatorname{Hom}_{\mathcal{M}}\left(V,\mathcal{M}^{(c)}\right)$ when $\mathcal{M}^{(c)}$ is a direct summand of $V$, and let $\iota_{V,c}$ in $\operatorname{Hom}_{\mathcal{M}}\left(\mathcal{M}^{(c)},V\right)$ such that $\pi_{V,c}\circ \iota_{V,c}=\operatorname{id}_{\mathcal{M}^{(c)}}$.

Now, we can define a monoidal, $\CC$-linear functor $$\mathcal{H}:\mathbf{Dgrams}_{\Gamma,\star}\longrightarrow \mathcal{M}_{\text{irr}}$$ which is given on the generating objects and morphisms as follows:

\[
a \mapsto \mathcal{M}^{(a)} \hspace{25mm}
\star \mapsto V
\]

\[
\begin{tikzpicture}[scale=.5, baseline=0]
	\begin{pgfonlayer}{nodelayer}
		\node   (2) at (-1, -1) {};
		\node   (3) at (1, -1) {};
		\node   (4) at (0, 0) {};
		\node   (5) at (0, 1) {};
		\node   (12) at (-1, -1.5) {$\star$};
		\node   (13) at (1, -1.5) {$a$};
		\node   (14) at (0, 1.5) {$b$};
	\end{pgfonlayer}
	\begin{pgfonlayer}{edgelayer}
		\draw [bend left=90, looseness=1.75] (2.center) to (3.center);
		\draw (4.center) to (5.center);
	\end{pgfonlayer}
\end{tikzpicture}  \mapsto \pi_{a,b}\hspace{20mm} \begin{tikzpicture}[scale=.5, baseline=0]
	\begin{pgfonlayer}{nodelayer}
		\node   (2) at (-1, 1) {};
		\node   (3) at (1, 1) {};
		\node   (4) at (0, 0) {};
		\node   (5) at (0, -1) {};
		\node   (12) at (-1, 1.5) {$\star$};
		\node   (13) at (1, 1.5) {$a$};
		\node   (14) at (0, -1.5) {$b$};
	\end{pgfonlayer}
	\begin{pgfonlayer}{edgelayer}
		\draw [bend right=90, looseness=1.75] (2.center) to (3.center);
		\draw (4.center) to (5.center);
	\end{pgfonlayer}
\end{tikzpicture} \mapsto \iota_{a,b}
\]

\[
\begin{tikzpicture}[scale=.5, baseline=0]
	\begin{pgfonlayer}{nodelayer}
		\node   (5) at (0, -1) {};
		\node   (14) at (0, -1.5) {$a$};
		\node   (15) at (0, 1) {};
		\node   (16) at (0, 1.5) {$a$};
	\end{pgfonlayer}
	\begin{pgfonlayer}{edgelayer}
		\draw (15.center) to (5.center);
	\end{pgfonlayer}
\end{tikzpicture}  \mapsto \operatorname{id}_{\mathcal{M}^{(a)}} \hspace{20mm} \begin{tikzpicture}[scale=.5, baseline=0]
	\begin{pgfonlayer}{nodelayer}
		\node   (5) at (0, -1) {};
		\node   (14) at (0, -1.5) {$\star$};
		\node   (15) at (0, 1) {};
		\node   (16) at (0, 1.5) {$\star$};
	\end{pgfonlayer}
	\begin{pgfonlayer}{edgelayer}
		\draw (15.center) to (5.center);
	\end{pgfonlayer}
\end{tikzpicture} \mapsto \operatorname{id}_{V}
\]

\[
\begin{tikzpicture}[scale=.5, baseline=0]
	\begin{pgfonlayer}{nodelayer}
		\node   (5) at (0, -1) {};
		\node   (14) at (0, -1.5) {$c$};
		\node   (15) at (0, 1) {};
		\node   (16) at (0, 1.5) {$\star$};
		\node   (17) at (0, 0) {$\triangledown$};
	\end{pgfonlayer}
	\begin{pgfonlayer}{edgelayer}
		\draw (15.center) to (5.center);
	\end{pgfonlayer}
\end{tikzpicture}  \mapsto \iota_{V,c}\hspace{20mm} \begin{tikzpicture}[scale=.5, baseline=0]
	\begin{pgfonlayer}{nodelayer}
		\node   (5) at (0, -1) {};
		\node   (14) at (0, -1.5) {$\star$};
		\node   (15) at (0, 1) {};
		\node   (16) at (0, 1.5) {$c$};
		\node   (17) at (0, 0) {$\vartriangle$};
	\end{pgfonlayer}
	\begin{pgfonlayer}{edgelayer}
		\draw (15.center) to (5.center);
	\end{pgfonlayer}
\end{tikzpicture} \mapsto \pi_{V,c}
\]

\noindent and extend monoidally and $\CC$-linearly.  The proofs are analogous to show $\mathcal{H}$ is a full functor from $\mathbf{Dgrams}_{\Gamma, \star}$ to $\mathcal{M}_{\text{irr}}$.

Furthermore, we can define an induced faithful functor from $\bigslant{\mathbf{Dgrams}_{\Gamma, \star}}{\mathcal{I}}$ where we let $\mathcal{I}$ be the tensor ideal generated by the following relations: 

\begin{center}
\begin{tikzpicture}[scale=0.5, baseline=-2]
	\begin{pgfonlayer}{nodelayer}
		\node   (0) at (-1, 1) {};
		\node   (1) at (1, 1) {};
		\node   (2) at (-1, -1) {};
		\node   (3) at (1, -1) {};
		\node   (4) at (0, -1) {};
		\node   (5) at (0, -2) {};
		\node   (6) at (0, 1) {};
		\node   (7) at (0, 2) {};
		\node   (9) at (4.5, 1) {};
		\node   (10) at (4.5, -1) {};
		\node   (11) at (1.5, 0) {$=$};
		\node   (12) at (3.25, 0) {$\delta_{a,b}\alpha_{d}$};
		\node   (13) at (0, 0) {$d$};
		\node   (14) at (0, -2.5) {$a$};
		\node   (15) at (0, 2.5) {$b$};
		\node   (16) at (4.5, 1.5) {$a$};
		\node   (17) at (4.5, -1.5) {$a$};
	\end{pgfonlayer}
	\begin{pgfonlayer}{edgelayer}
		\draw (0.center) to (2.center);
		\draw (2.center) to (3.center);
		\draw (3.center) to (1.center);
		\draw (1.center) to (0.center);
		\draw (6.center) to (7.center);
		\draw (4.center) to (5.center);
		\draw (9.center) to (10.center);
	\end{pgfonlayer}
\end{tikzpicture}, where $\delta_{a, b}$ is the Kronecker delta, and $\alpha_{d}\in \CC$,

\begin{tikzpicture}[scale=0.5, baseline=-2]
	\begin{pgfonlayer}{nodelayer}
		\node   (0) at (-1, 1) {};
		\node   (1) at (1, 1) {};
		\node   (2) at (-1, -1) {};
		\node   (3) at (1, -1) {};
		\node   (4) at (0, -1) {};
		\node   (5) at (0, -2) {};
		\node   (6) at (0, 1) {};
		\node   (7) at (0, 2) {};
		\node   (13) at (0, 0) {$d$};
		\node   (14) at (0, -2.5) {$a$};
		\node   (15) at (0, 2.5) {$\star$};
	\end{pgfonlayer}
	\begin{pgfonlayer}{edgelayer}
		\draw (0.center) to (2.center);
		\draw (2.center) to (3.center);
		\draw (3.center) to (1.center);
		\draw (1.center) to (0.center);
		\draw (6.center) to (7.center);
		\draw (4.center) to (5.center);
	\end{pgfonlayer}
\end{tikzpicture} $=\begin{cases} \alpha_{d}\begin{tikzpicture}[scale=.5, baseline=-2]
	\begin{pgfonlayer}{nodelayer}
		\node   (5) at (0, -1) {};
		\node   (14) at (0, -1.5) {$a$};
		\node   (15) at (0, 1) {};
		\node   (16) at (0, 1.5) {$\star$};
		\node   (17) at (0, 0) {$\triangledown$};
	\end{pgfonlayer}
	\begin{pgfonlayer}{edgelayer}
		\draw (15.center) to (5.center);
	\end{pgfonlayer}
\end{tikzpicture} & \text{for } a-\mathbb{1}\\
0 & \text{otherwise}\end{cases}$ \hspace{10mm} and \hspace{10mm}\begin{tikzpicture}[scale=0.5, baseline=-2]
	\begin{pgfonlayer}{nodelayer}
		\node   (0) at (-1, 1) {};
		\node   (1) at (1, 1) {};
		\node   (2) at (-1, -1) {};
		\node   (3) at (1, -1) {};
		\node   (4) at (0, -1) {};
		\node   (5) at (0, -2) {};
		\node   (6) at (0, 1) {};
		\node   (7) at (0, 2) {};
		\node   (13) at (0, 0) {$d$};
		\node   (14) at (0, -2.5) {$\star$};
		\node   (15) at (0, 2.5) {$a$};
	\end{pgfonlayer}
	\begin{pgfonlayer}{edgelayer}
		\draw (0.center) to (2.center);
		\draw (2.center) to (3.center);
		\draw (3.center) to (1.center);
		\draw (1.center) to (0.center);
		\draw (6.center) to (7.center);
		\draw (4.center) to (5.center);
	\end{pgfonlayer}
\end{tikzpicture} $=\begin{cases} \alpha_{d}\begin{tikzpicture}[scale=.5, baseline=-2]
	\begin{pgfonlayer}{nodelayer}
		\node   (5) at (0, -1) {};
		\node   (14) at (0, -1.5) {$\star$};
		\node   (15) at (0, 1) {};
		\node   (16) at (0, 1.5) {$a$};
		\node   (17) at (0, 0) {$\vartriangle$};
	\end{pgfonlayer}
	\begin{pgfonlayer}{edgelayer}
		\draw (15.center) to (5.center);
	\end{pgfonlayer}
\end{tikzpicture} & \text{for } a-\mathbb{1}\\
0 & \text{otherwise}\end{cases}$.
\end{center}

The proofs are analogous to show this construction admits of a fully faithful functor.  

Let us now explore some limitations on these constructions.  First, we need connectedness in our representation graph as the following will illuminate.  Consider a finite group $G$ with the set $\left\{G^{(i)}\right\}_{0\leq i\leq n}$ an exhaustive list of irreducible $G$-modules, up to isomorphism, and let the defining module, $V$, for our representation graph be the trivial module for $G$.  Then we have that $R(V,G)$ is

\begin{center}
\begin{tikzpicture}
	\begin{pgfonlayer}{nodelayer}
		\node   (0) at (-3.25, -0.25) {};
		\node   (1) at (-2.75, -0.25) {};
		\node   (2) at (-1.25, -0.25) {};
		\node   (6) at (-0.75, -0.25) {};
		\node   (7) at (1, -0.25) {$\cdots$};
		\node   (8) at (2.75, -0.25) {};
		\node   (9) at (3.25, -0.25) {};
		\node  [style=emptynode] (10) at (-3, -0.25) {$0$};
		\node  [style=emptynode] (11) at (-1, -0.25) {$1$};
		\node  [style=emptynode] (12) at (3, -0.25) {$n$};
	\end{pgfonlayer}
	\begin{pgfonlayer}{edgelayer}
		\draw [bend left=120, looseness=7.00] (0.center) to (1.center);
		\draw [bend left=120, looseness=7.00] (2.center) to (6.center);
		\draw [bend left=120, looseness=6.75] (8.center) to (9.center);
	\end{pgfonlayer}
\end{tikzpicture}
\end{center}

\noindent which would result in a diagrammatic category which does not recover any homomorphisms in $\operatorname{Hom}_G\left(G^{(a)}\otimes G^{(b)}, G^{(c)}\right)$ even when $G^{(c)}$ shows up in the direct sum decomposition of $G^{(a)}\otimes G^{(b)}$.  

Another limitation of our construction is that we assumed the representation graph has no multiple edges between two vertices.  This corresponds to a multiplicity free condition on the direct sum decomposition of the tensor product of the defining module $V$ and each simple module.  Without this condition, there is not a canonical way to decompose this tensor product into simples, and we must make non-trivial choices. 

On the other hand, please note that we make no assumption that these graphs be finite.  In particular, the representation graphs $R(SU(2),V)$, $R(\mathbf{C}_{\infty},V)$, and $R(\mathbf{D}_{\infty},V)$ where $V$ is the natural module for $SU(2)$, have infinitely many nodes.  In this situation, our approach still applies and we can construct a diagrammatic category which encodes the corresponding representation theory.  

Beyond the subgroups of $SU(2)$, there are numerous representation graphs in the literature.  They often are aptly named McKay graphs.  Here are just a few places the reader can explore: \cite{Ost20}\cite{KO02}, \cite{Fren10}, \cite{AEA22}, and \cite{EP14}.  In particular, the following example uses Evans' and Pugh's explicit computation of a representation graph $R(PSL(2;8),\Sigma_7^{(1)})$ to construct a diagrammatic category for a setting unlike any other in this paper.  

\begin{example}
Let $G=PSL(2;8)$ denote the projective special linear group of degree $2$ over the finite field of order $8$.  This is an irreducible primitive group of order $504$.  There are nine irreducible $G$-modules.  We will follow the notation from Evans and Pugh and let the irreducible $G$-modules be indexed in the following way:  let the trivial module of dimension $1$ be denoted $\Sigma_1$; there are four $7$-dimensional irreducible modules denoted $\Sigma_7^{(1)}$, $\Sigma_7^{(1)'}$, $\Sigma_7^{(1)''}$, and $\Sigma_7^{(2)}$; there is one $8$-dimensional irreducible module denoted $\Sigma_8$; finally, there are three $9$-dimensional irreducible modules $\Sigma_9$, $\Sigma_9'$, and $\Sigma_9''$.  For consistency with the notation in this paper, let $\Sigma_7^{(1)}=V$.  Thus, Evans and Pugh compute the representation graph $R(V,G)$ to be the following undirected graph:

\begin{center}
\begin{tikzpicture}[scale=1.5]
	\begin{pgfonlayer}{nodelayer}
		\node [style=smallnode] (0) at (0, 3) {};
		\node [style=smallnode] (1) at (2, 2) {};
		\node [style=smallnode] (2) at (3, 0) {};
		\node [style=smallnode] (3) at (2, -2) {};
		\node [style=smallnode] (4) at (0, -3) {};
		\node [style=smallnode] (5) at (-2, -2) {};
		\node [style=smallnode] (6) at (-3, 0) {};
		\node [style=smallnode] (7) at (-2, 2) {};
		\node [style=smallnode] (8) at (-4, 0) {};
		\node   (9) at (-3.25, -0.25) {$7^{(1)}$};
		\node   (10) at (-4, -0.5) {$1$};
		\node   (11) at (-2, 2.5) {$7^{(1)'}$};
		\node   (12) at (0.5, 3.25) {$7^{(1)''}$};
		\node   (13) at (2.5, 1.75) {$9$};
		\node   (14) at (3, -0.5) {$9'$};
		\node   (15) at (1.75, -2.5) {$9''$};
		\node   (16) at (0.5, -3.25) {$8$};
		\node   (17) at (-1.75, -2.5) {$7^{(2)}$};
	\end{pgfonlayer}
	\begin{pgfonlayer}{edgelayer}
		 \draw [thick] (8.center) to (6.center);
		 \draw [thick] (6.center) to (7.center);
		 \draw [thick] (7.center) to (0.center);
		 \draw [thick] (0.center) to (1.center);
		 \draw [thick] (1.center) to (2.center);
		 \draw [thick] (2.center) to (3.center);
		 \draw [thick] (3.center) to (4.center);
		 \draw [thick] (4.center) to (5.center);
		 \draw [thick] (5.center) to (6.center);
		 \draw [thick] (7.center) to (4.center);
		 \draw [thick] (7.center) to (3.center);
		 \draw [thick] (7.center) to (2.center);
		 \draw [thick] (7.center) to (1.center);
		 \draw [thick] [in=120, out=60, loop] (0.center) to ();
		 \draw [thick] (0.center) to (4.center);
		 \draw [thick] (0.center) to (3.center);
		 \draw [thick] (0.center) to (2.center);
		 \draw [thick] (6.center) to (2.center);
		 \draw [thick] (6.center) to (3.center);
		 \draw [thick] (6.center) to (1.center);
		 \draw [thick] [in=150, out=75, loop] (6.center) to ();
		 \draw [thick] [in=75, out=15, loop] (1.center) to ();
		 \draw [thick] [in=30, out=-30, loop] (2.center) to ();
		 \draw [thick] (1.center) to (5.center);
		 \draw [thick] (1.center) to (4.center);
		 \draw [thick] (1.center) to (3.center);
		 \draw [thick] (2.center) to (4.center);
		 \draw [thick] (2.center) to (5.center);
		 \draw [thick] (3.center) to (5.center);
		 \draw [thick] [in=-15, out=-75, loop] (3.center) to ();
		 \draw [thick] [in=-120, out=-60, loop] (4.center) to ();
		 \draw [thick] [in=-150, out=-90, loop] (5.center) to ();
	\end{pgfonlayer}
\end{tikzpicture}
\end{center}

\noindent Now we can construct the monoidal $\CC$-linear diagrammatic category $\mathbf{Dgrams}_{R(V,PSL(2;8))}$.  Keeping consistent with the notation from Definition \ref{def:DgramsRVG}, let $$I_{PSL(2;8)}=\{1,7^{(1)},7^{(1)'},7^{(1)''},7^{(2)},8,9,9',9''\}.$$  The set of objects are generated by $a\in I_{PSL(2;8)}$ and the morphisms are generated by 

\begin{center}
\begin{tikzpicture}[scale=.5]
	\begin{pgfonlayer}{nodelayer}
		\node  (0) at (-3, -1) {};
		\node  (1) at (-3, 1) {};
		\node  (2) at (-1, -1) {};
		\node  (3) at (1, -1) {};
		\node  (4) at (0, 0) {};
		\node  (5) at (0, 1) {};
		\node  (6) at (2, 1) {};
		\node  (7) at (4, 1) {};
		\node  (8) at (3, 0) {};
		\node  (9) at (3, -1) {};
		\node  (10) at (-3, -1.5) {$a$};
		\node  (11) at (-3, 1.5) {$a$};
		\node  (12) at (-1, -1.5) {$7^{(1)}$};
		\node  (13) at (1, -1.5) {$b$};
		\node  (14) at (0, 1.5) {$c$};
		\node  (15) at (2, 1.5) {$7^{(1)}$};
		\node  (16) at (4, 1.5) {$b$};
		\node  (17) at (3, -1.5) {$c$};
	\end{pgfonlayer}
	\begin{pgfonlayer}{edgelayer}
		\draw (0.center) to (1.center);
		\draw [bend left=90, looseness=1.75] (2.center) to (3.center);
		\draw (4.center) to (5.center);
		\draw [bend right=90, looseness=1.75] (6.center) to (7.center);
		\draw (8.center) to (9.center);
	\end{pgfonlayer}
\end{tikzpicture}
\end{center}

\noindent where $a, b,$ and $c\in I_{PSL(2;8)}$.  Then by Theorem \ref{thm:FGirrFull}, there is an essentially surjective and full functor from $\mathbf{Dgrams}_{R(V,PSL(2;8))}$ onto the full subcategory of $PSL(2;8)$-mod with objects generated by the irreducible $PSL(2;8)$-modules.  By Theorem \ref{thm:FGirrFaithful}, if relations are imposed to ensure (\ref{eqn:CatSchur}), then the resulting category $\overline{\mathbf{Dgrams}}_{R(V,PSL(2;8))}$ is equivalent to $PSL(2;8)$-\textbf{mod}$_{\text{irr}}$.

\end{example}

We shall finish this paper with two more examples.  The following examples show that the constructions in this paper apply to situations outside of representation theory.  The first is the universal Verlinde category, which is a symmetric fusion category defined in \cite{Ost20}.  The second is a slightly more exotic fusion category.  In particular, the Fibonacci category has objects which have non-integer dimensions.  For a more comprehensive understanding of this setting, see \cite{BD12}.

\begin{example}
    Let $\mathbf{k}$ be a field of positive characteristic $p$, $C_p$ be the cyclic group of order $p$, and Rep$_{\mathbf{k}}(C_p)$ be the category of finite dimensional $C_p$-modules over $\mathbf{k}$.  Furthermore, we say that a morphism $f$ is negligible if $\operatorname{tr}(f\circ g)=0$ for all other morphisms $g$, where tr represents the trace. Ostrik defines the universal Verlinde category, Ver$_p$, to be the quotient of Rep$_{\mathbf{k}}(C_p)$ by the negligible morphisms.  Ostrik further gives a characterization of the simple objects and the fusion rule for Ver$_p$, which results in the following fusion graph:

\begin{center}
    \begin{tikzpicture}[scale=0.75, {circ/.style}={shape=circle, inner sep=1pt, draw, node contents=}]
	\begin{pgfonlayer}{nodelayer}
		\node   (0) at (-4, 0) {};
		\node [style=emptynode] (1) at (-2, 0) {$2$};
		\node [style=emptynode] (2) at (0, 0) {$3$};
		\node [style=emptynode] (3) at (2, 0) {$4$};
		\node   (4) at (2.75, 0) {};
		\node [style=emptynode] (7) at (-4.25, 0) {$1$};
		\node   (8) at (-2.25, 0) {};
		\node   (9) at (-0.25, 0) {};
		\node   (10) at (0.25, 0) {};
		\node   (11) at (-1.75, 0) {};
		\node   (12) at (1.75, 0) {};
		\node   (13) at (2.25, 0) {};
		\node [style=emptynode] (14) at (5.45, 0) {$p-1$};
		\node   (15) at (4.75, 0) {};
		\node   (16) at (4.25, 0) {};
		\node   (17) at (3.5, 0) {$\cdots$};
	\end{pgfonlayer}
	\begin{pgfonlayer}{edgelayer}
		\draw (0.center) to (8.center);
		\draw (11.center) to (9.center);
		\draw (10.center) to (12.center);
		\draw (13.center) to (4.center);
		\draw (16.center) to (15.center);
	\end{pgfonlayer}
\end{tikzpicture}
\end{center}

    \noindent where the node labeled by $2$ corresponds to defining object.  Thus, our construction provides a diagrammatic realization of Ver$_p$.
\end{example}

\begin{example}
Let $\mathcal{F}ib$ be the semi-simple, monoidal, $\CC$-linear category in which $\operatorname{Obj}\left(\mathcal{F}ib\right)$ are generated by two objects, $I$ and $X$, which follow the following tensor product decomposition rules:

\[
I\otimes I \cong I,\hspace{5mm} I \otimes X\cong X \otimes I\cong X,\hspace{5mm} \text{and} \hspace{5mm} X \otimes X \cong I \oplus X.
\]

Now, define $\mathcal{F}ib_{\text{irr}}$ as the full subcategory monoidally generated by $X$ and $I$.  Thus, we can construct the following graph $\Gamma:=\Gamma_{\mathcal{F}ib_{\text{irr}}}$ with generating object $X$:

\begin{center}
\begin{tikzpicture}
	\begin{pgfonlayer}{nodelayer}
		\node  (0) at (-3.25, 0) {};
		\node   (1) at (-0.25, 0) {};
		\node   (2) at (0, 1.25) {};
		\node   (3) at (0.15, 0.25) {};
		\node [style=emptynode] (4) at (0, 0) {$X$};
		\node   (5) at (-0.15, 0.25) {};
		\node [style=emptynode] (6) at (-3.5, 0) {$I$};
	\end{pgfonlayer}
	\begin{pgfonlayer}{edgelayer}
		\draw [thick] (0.center) to (1.center);
		\draw [in=135, out=180, looseness=1.25, thick] (2.center) to (5.center);
		\draw [in=45, out=0, looseness=1.50, thick] (2.center) to (3.center);
	\end{pgfonlayer}
\end{tikzpicture}.
\end{center}

From the graph $\Gamma$, we can construct $\mathbf{Dgrams}_{\Gamma}$ and functor $$\mathcal{H}: \mathbf{Dgrams}_{\Gamma}\longrightarrow \mathcal{F}ib_{\text{irr}}$$ consistent with the other constructions in this paper.

\end{example}

\phantomsection

\addcontentsline{toc}{section}{Bibliography}

\bibliography{DiagramCatsFromDirectedGraphs}
\bibliographystyle{unsrt}

}

\end{document}